\DeclareMathOperator{\dimm}{dim_M}         % Minkowski dimension
\DeclareMathOperator{\dimh}{dim_H}         % Hausdorff dimension
\DeclareMathOperator{\diam}{diam}	   % diameter
\DeclareMathOperator{\dist}{dist}	   % distance
\newcommand{\HH}{\mathcal{H}}
\newcommand{\MM}{\mathcal{M}}
\newcommand{\R}{\mathbb{R}}
\newcommand{\N}{\mathbb{N}}
\newcommand{\G}{\mathbb{G}}
\newcommand{\roo}{\varrho}
\newcommand{\iii}{\mathtt{i}}
\newcommand{\jjj}{\mathtt{j}}
\newcommand{\kkk}{\mathtt{k}}
\newcommand{\hhh}{\mathtt{h}}
\newcommand{\eps}{\varepsilon}
\theoremstyle{plain}
\newtheorem{theorem}{Theorem}[section]
\newtheorem{corollary}[theorem]{Corollary}
\newtheorem{proposition}[theorem]{Proposition}
\newtheorem{lemma}[theorem]{Lemma}
\theoremstyle{definition}
\newtheorem{definition}[theorem]{Definition}
\theoremstyle{remark}
\newtheorem{remark}[theorem]{Remark}
\newtheorem*{remark*}{Remark}
\newtheorem{question}[theorem]{Question}
\newtheorem{remarks}[theorem]{Remarks}
\newtheorem{example}[theorem]{Example}
\newtheorem*{acknowledgements}{Acknowledgements}
\numberwithin{equation}{section}
\begin{document}

\title[Moran constructions in metric spaces]{Weakly controlled Moran constructions and iterated functions systems in metric spaces}

\author{ Tapio Rajala  \and   
         Markku Vilppolainen      }
\address{Department of Mathematics and Statistics \\
         P.O. Box 35 (MaD) \\
         FI-40014 University of Jyv\"askyl\"a \\
         Finland}

\email{tapio.m.rajala@jyu.fi}
\email{markku.vilppolainen@jyu.fi}

\thanks{TR acknowledges the support of the Academy of Finland, project No. 211229.}
\subjclass[2000]{Primary 28A80, 28A78; Secondary 37C45.}
\keywords{Moran construction, semiconformal iterated function system, finite clustering property, ball condition, open set condition,
          Hausdorff measure, Hausdorff dimension}
\date{\today}

%%%%%%%%%%%%%%%%%%%%%%%%%%%%%%%%%%%%%%%%%%%%%%%%%%%%%%%%%%%%%%%%%%%%%

\begin{abstract}
We study the Hausdorff measures of limit sets of weakly controlled Moran constructions in metric spaces. The separation of the construction pieces is closely related to the Hausdorff measure of the corresponding limit set. In particular, we investigate different separation conditions for semiconformal iterated function systems. Our work generalizes well known results on self-similar sets in metric spaces as well as results on controlled Moran constructions in Euclidean spaces.
%As one collection of examples we construct sets in Carnot groups with optimal Hausdorff dimensions in different metrics.
\end{abstract}

%%%%%%%%%%%%%%%%%%%%%%%%%%%%%%%%%%%%%%%%%%%%%%%%%%%%%%%%%%%%%%%%%%%%%

\maketitle

%\tableofcontents

\section{Introduction}

A familiar method of producing sets with fractal properties, such as the Cantor ternary set, is to start with a single compact subset of a metric space and proceed iteratively from one level of construction to the next by replacing each construction piece by  a fixed number of its compact subsets. The principal object of study, the \emph{limit set}, is then the set of those points from the start which do not get deleted in the process. 
% EARLIER: Usually one also makes sure that the construction pieces decrease in size in some controlled manner.
% STILL EARLIER: In this paper we also expect the diameters of the construction pieces to tend in some controlled manner to zero.
Honoring  %to give credit for
the seminal contribution of P.\ A.\ P.\ Moran, who in \cite{M1946} initiated the study of sets which are nowadays called Moran fractals, cf. \cite{CM1992},  we call such a construction scheme a \emph{Moran construction}. It is evident that one needs to apply some control over the shapes and sizes of the construction pieces to get a manageable limit set.
%In this paper we consider weakly controlled Moran constructions in metric spaces. The name for the class of constructions comes from the research carried out by P.\ A.\ P.\ Moran \cite{M1946}.
%The name given here for the class of geometric construction schemes with nested compact sets comes from the research carried out by
 Like Moran, we are primarily interested in determining the Hausdorff dimension of the limit set and finding out whether or not the set has positive and/or finite Hausdorff measure in this dimension. We could go further and ask for the exact Hausdorff measure of the set. However, this question is very hard even for self-similar sets in $\R^n$. See for example \cite{ZF2004, M2005, LM2007}.

%His idea was to study (self-similar) constructions, such as the construction of the Cantor set, by looking at the %construction pieces without bothering about any constructing functions. He made the following crucial observation: %if one knows the rates of contraction during transition from a construction piece to its sub-pieces on the next level, ...

%The property we first look from a set is its Hausdorff dimension. After we have determined the dimension we may ask %if the set has positive and/or finite Hausdorff measure in this dimension. To answer this we usually have to work a %bit more. We could go further and ask for the exact Hausdorff measure of the set. However, this question is very %hard even for self-similar sets in $\R^n$. See for example \cite{ZF2004, M2005, LM2007}.

Fractal sets have traditionally been studied with the help of constructing functions. In particular, self-similar sets are constructed by iterating similitude mappings (which are shape-preserving by definition), see J. E. Hutchinson \cite{H1981}. Conditions like the open set condition and the strong open set condition have been invented to guarantee that the dimension of a self-similar set is fully determined by the contraction ratios of the constructing functions. %Under these conditions, appropriately called \emph{separation conditions}, a self-similar set is a union of its scaled-down copies with the added property that the overlap between the copies is insignificant both in the measure-theoretical and in the topological sense.
Similar conditions are also available for Moran constructions. %Apart from Moran's original non-overlapping condition given for the special case where all construction pieces are geometrically similar and differ only in scale (see \cite[Theorem III]{M1946}), for more general Moran constructions there are
Two basic ones, the \emph{finite clustering property} and the \emph{ball condition}, were studied in detail in \cite{KV2008} %by A. K\"{a}enm\"{a}ki and M. Vilppolainen 
in a Euclidean setting, see also \cite{K2004}. These conditions limit the amount of overlap between construction pieces. Likewise, under the (strong) open set condition, a self-similar subset of a Euclidean space is made up of its scaled-down copies with insignificant overlap between the parts. Accordingly, the aforementioned conditions will be referred to as \emph{separation conditions}. This paper in large part studies these conditions in the setting of general metric spaces.

The separation conditions are sometimes exactly what is needed for a self-similar set to have positive (and finite) Hausdorff measure at the expected dimension, see \cite{BG1992, S1994, S1996, K1995, BR2007}. %By ''correct  dimension'' we  mean the dimension given by the contraction ratios of the contructing similitudes,
We will however see that care must be taken with the choice of the class of functions when working in a general metric space. Example \ref{ex:OSCnoSOSC} gives a self-similar set in a complete doubling metric space for which the open set condition is satisfied, yet the dimension of this set %is not determined %is grossly overestimated by the value expected on ground of
cannot be inferred from the contraction ratios of the associated mappings. This contrasts the Euclidean case drastically. The set in the example is constructed with non-bijective similitudes.

To avoid examples like the one mentioned above, we define \emph{properly semiconformal iterated function systems} and prove for them (in the setting of doubling metric spaces) in Theorem \ref{thm:equiv} the equivalence between different separation conditions and positivity of the Hausdorff measure of the limit set at the critical dimension. A self-similar set constructed with bijective similitudes serves as a basic example for a limit set of a properly semiconformal iterated function system. Therefore, Theorem \ref{thm:equiv} generalizes \cite[Theorem 3.1]{BR2007}.

The paper is organized as follows. We begin Section \ref{section:preliminaries} by introducing the basic notation and recalling some definitions. Among these are the notions of controlled Moran construction and weakly controlled Moran construction. The rest of the section deals with basic properties of the topological pressure and the symbol space.

In Section \ref{section:separation}, we study the relationship between the basic separation conditions for Moran constructions and the Hausdorff dimension and measure of the limit set. We also investigate under what circumstances
the finite clustering property and the ball condition are actually equivalent. We mainly focus on doubling metric spaces and transfer as many of the results obtained in Euclidean spaces to the doubling metric spaces as possible. %To carry out this effectively we first investigate how different properties behave under the Assouad embedding.

Section \ref{section:semiconformal} is devoted to semiconformal iterated function systems in metric spaces. The main focus in this section is the role of various separation conditions for semiconformal and self-similar iterated function systems. We establish the connection between Hausdorff measure, ball condition, open set condition and strong open set condition for properly semiconformal iterated functions systems in doubling metric spaces. The study of this connection was suggested for example in \cite{BR2007}. We also give some results for semiconformal iterated functions systems in non-doubling metric spaces.

In the final section, Section \ref{section:sub}, we define controlled sub-constructions of Moran constructions. We give examples of sub-constructions in Carnot groups which answer a question posed in \cite{BTW2009}.

\begin{acknowledgements} %Our work was greatly influenced by Antti K\"{a}enm\"{a}ki and we thank him for the inspiring conversations on Moran constructions.
We thank Antti K\"aenm\"aki for the inspiring conversations %his valuable comments
during the preparation of this work and his valuable comments for the manuscript.
\end{acknowledgements}

\section{Notation and preliminaries}\label{section:preliminaries}

Let $(M,d)$ denote a metric space $M$ equipped with a metric $d$. We define an open ball to be $B(x,r) := \{y \in M : d(y,x)<r\}$. The diameter of a set $E \subset M$ is written as $\diam(E) := \sup\{d(x,y): x,y\in E\}$. The distance between two sets $E, F \subset M$ is denoted by $\dist(E,F) := \inf\{d(x,y) : x \in E, y \in F\}$. We also abbreviate $\dist(x,F) := \dist(\{x\},F)$. %If $M$ is a subset of an Euclidean space and the inherited metric is used, we denote the Euclidean metric by $d_\EE$.

We will focus mainly on the Hausdorff dimension and measures of sets. Let $0 < s < \infty$ and $E \subset M$. The \emph{$s$-dimensional Hausdorff measure} of $E$ is defined as
\begin{align*}
\HH^s(E) := \lim_{\delta \to 0}\inf\biggl\{\sum_{i=1}^\infty\diam(A_i)^s : \;& E \subset \bigcup_{i=1}^\infty A_i \text{ and }\diam(A_i)<\delta\\
 &\text{ for every } i \in \N\biggr\}.
\end{align*}
The $0$-dimensional Hausdorff measure $\HH^{0}$ is defined to be the counting measure: $\HH^{0}(E) = \#E$.
The \emph{Hausdorff dimension} of a set $E \subset M$ is 
\[
\dimh(E) := \inf\{s : \HH^s(E) = 0\} = \sup\{s : \HH^s(E) = \infty\}.
\]
Another dimension we consider is the \emph{(upper) Minkowski dimension}, which is defined for a compact set $E \subset M$ as
\[
\dimm(E) := \limsup_{r \downarrow 0}\frac{-\log N(E,r)}{\log r}, 
\]
where
\[
 N(E,r) := \min\Big\{k : A \subset \bigcup_{i=1}^k B(x_i,r)\Big\}.
\]

In the set constructions of this paper we will always use %the index set $I = \{1, \dots, N\}$ with an integer $N \ge 2$.
an index set $I$ with $2 \le \#I < \infty$. The set of finite words will be written as $I^* := \bigcup_{n=1}^\infty I^n$. The set of infinite words is $I^\infty := I^\N$. For every word $\iii=(i_1, \dots, i_n) \in I^*$ we write the \emph{length} as $|\iii| = n$. With $\iii \in I^*$ and $\jjj \in I^*\cup I^\infty$ we write $\iii\jjj$ to mean the element in $I^*\cup I^\infty$ obtained by juxtaposing $\iii$ and $\jjj$. 

For $\iii \in I^*$ and $A \subset I^* \cup I^\infty$ we write $[\iii; A] := \{\iii\jjj : \jjj \in A\}$. With this we define the \emph{cylinder set} of $\iii \in I^*$ to be $\left[\iii\right] := [\iii; I^\infty]$. For $\iii \in I^*\cup I^\infty$ let $\iii|_n \in I^n$, with $1 \le n < |\iii|$, be so that $\left[\iii\right] \subset \left[\iii|_n\right].$ The notation $\iii \perp \jjj$ means that $\iii, \jjj \in I^*$ are \emph{incomparable}, that is, $\left[\iii\right] \cap \left[\jjj\right] = \emptyset$. For $\iii \in I^*$ we denote $\iii^- := \iii|_{|\iii|-1}$.

Recall that $I^\infty$ is a compact (ultra)metric space when equipped with the metric
\[
  d_2(\iii,\jjj) = \begin{cases}
                     2^{1-\min\{ k \,:\; \iii|_k \ne \jjj|_k\}} & \text{if } \, \iii \ne \jjj \\
                     0 & \text{if } \, \iii = \jjj
                   \end{cases}.
\]
In the \emph{symbol space} $(I^\infty, d_2)$ the balls are exactly the cylinder sets. Moreover, every cylinder has empty boundary.

\begin{definition} %{Moran constructions}\label{def:Moran}
Let $M$ be a metric space. A collection $\{X_\iii : \iii \in I^*\}$ of compact subsets of $M$ with positive diameter is a \emph{weakly controlled Moran construction (WCMC)} provided that there exists a constant $D \ge 1$ so that for every $\iii, \jjj \in I^*$ the following four conditions hold:
\begin{enumerate}
  \renewcommand{\labelenumi}{(W\arabic{enumi})}
  \renewcommand{\theenumi}{(W\arabic{enumi})}
 \item \label{cond:incl}$X_{\iii} \subset X_{{\iii^-}}$,
 \item \label{cond:decr}there exists $n \in \N$ such that
 \[
  \max_{\iii\in I^n}\diam(X_\iii) < D^{-1},
 \]
 \item \label{cond:cover}$\diam(X_{\iii\jjj}) \le D\diam(X_\iii)\diam(X_\jjj)$,
 \item \label{cond:doub}$\diam(X_\iii) \ge D^{-1}\diam(X_{\iii^-})$.
\end{enumerate}
\end{definition}
WCMC is a generalization of the notion termed \emph{controlled Moran construction (CMC)} in \cite{KV2008}. In the definition of a controlled Moran construction we likewise use an indexed collection of compact sets and require that \ref{cond:incl} and \ref{cond:decr} are satisfied. Instead of conditions \ref{cond:cover} and \ref{cond:doub}, we assume the following stronger condition:
\begin{enumerate}
  \renewcommand{\labelenumi}{(C\arabic{enumi})}
  \renewcommand{\theenumi}{(C\arabic{enumi})}
 \item \label{cond:M2} for every $\iii, \jjj \in I^*$ we have
\[
 D^{-1} \le \frac{\diam(X_{\iii\jjj})}{\diam(X_\iii)\diam(X_\jjj)} \le D.
\]
\end{enumerate}

The next simple lemma is useful in many computations. For its proof, see \cite[Lemma 3.1]{KV2008}.
\begin{lemma}\label{lem:diamBoundExponential}
For a weakly controlled Moran construction there exist constants $c>0$ and $0<\roo<1$ so that
\begin{equation}\label{eq:geomdecr}
\diam(X_\iii) \le c \roo^{|\iii|}
\end{equation}
for every $\iii \in I^*$.
\end{lemma}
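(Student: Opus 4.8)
The plan is to extract a genuine contraction factor from a single block of length $n$, and then to decompose an arbitrary word into such blocks. The crucial input is \ref{cond:decr}: fix $n \in \N$ with $\max_{\iii\in I^n}\diam(X_\iii) < D^{-1}$ and set $s := \max_{\iii\in I^n}\diam(X_\iii)$, so that $0 < Ds < 1$. The natural candidate for the ratio is then $\roo := (Ds)^{1/n}$, which satisfies $0 < \roo < 1$. Note that \ref{cond:doub} plays no role here; only \ref{cond:incl}, \ref{cond:decr} and \ref{cond:cover} are needed, since we are after an upper bound on the diameters.

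\textbf{Words of length a multiple of $n$.} First I would treat $\iii \in I^*$ with $|\iii| = kn$ for some $k \in \N$. Splitting $\iii = \jjj_1\jjj_2\cdots\jjj_k$ into consecutive blocks $\jjj_m \in I^n$ and applying \ref{cond:cover} repeatedly (a straightforward induction on $k$) gives
\[
  \diam(X_\iii) \le D^{k-1}\prod_{m=1}^k \diam(X_{\jjj_m}) \le D^{k-1}s^k = D^{-1}(Ds)^k = D^{-1}\roo^{\,kn} = D^{-1}\roo^{\,|\iii|}.
\]
So on the sub-lattice of lengths divisible by $n$ the bound already holds with constant $D^{-1}$.

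\textbf{General words and the choice of constants.} For arbitrary $\iii$ write $|\iii| = kn + r$ with $0 \le r < n$. If $k \ge 1$, then by \ref{cond:incl} we have $X_\iii \subset X_{\iii|_{kn}}$, hence $\diam(X_\iii) \le \diam(X_{\iii|_{kn}}) \le D^{-1}\roo^{\,kn}$; using $\roo^{\,kn} = \roo^{\,|\iii|}\roo^{-r} \le \roo^{\,|\iii|}\roo^{-(n-1)}$ (valid since $0 < \roo < 1$ and $r \le n-1$) this becomes $\diam(X_\iii) \le D^{-1}\roo^{-(n-1)}\roo^{\,|\iii|}$. The remaining case $k = 0$, i.e. $1 \le |\iii| < n$, involves only finitely many words because $\#I < \infty$; setting $M_0 := \max_{1 \le |\iii| < n}\diam(X_\iii) < \infty$ and again using $\roo^{\,|\iii|} \ge \roo^{\,n-1}$ gives $\diam(X_\iii) \le M_0 \le M_0\roo^{-(n-1)}\roo^{\,|\iii|}$. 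Taking $c := \roo^{-(n-1)}\max\{D^{-1}, M_0\}$ then yields \eqref{eq:geomdecr} for every $\iii \in I^*$. The only mildly delicate point—and it is pure bookkeeping rather than a real obstacle—is matching the exponents across the remainder $r$ and folding the finitely many short words into a single constant; the actual contraction is delivered entirely by the block estimate of the previous paragraph.
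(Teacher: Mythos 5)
Your proof is correct, and it is the standard block-decomposition argument: the paper itself omits the proof and simply cites \cite[Lemma 3.1]{KV2008}, where essentially this same reasoning (extract a contraction from one level-$n$ block via \ref{cond:decr}, iterate via \ref{cond:cover}, absorb the remainder and the finitely many short words into the constant via \ref{cond:incl}) is carried out. Your observation that \ref{cond:doub} is not needed for the upper bound is also accurate.
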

%\begin{proof}
%Let $n$ be the integer in condition \ref{cond:decr}. Then with each $m \in \N$ every $\iii \in I^m$ can be written as $\iii\jjj = %\iii_1\iii_2\dots\iii_k$ where $\iii_1, \dots \iii_k \in I^n$ and $|\jjj|<n$. We then see from \ref{cond:cover} that
%\begin{align*}
% \diam(X_\iii) & \le \diam(X_\jjj)^{-1} \prod_{l = 1}^k D\diam(X_{\iii_l})\\
%&  \le \max_{|\iii'|<n}\diam(X_{\iii'})^{-1} (D\max_{\iii'\in I^n}\diam(X_{\iii'}))^k\\
%& \le \max_{|\iii'|<n}\diam(X_{\iii'})^{-1} \big((D\max_{\iii'\in I^n}\diam(X_{\iii'}))^{1/n}\big)^m
%\end{align*}
%and so the claim is proven.
%\end{proof}

\begin{remark}
Assume that we have a CMC. From \ref{cond:M2} we get 
\[
\diam(X_\iii) \ge D^{-1} \diam(X_{\iii^-})\min_{j \in I}\diam(X_j) 
\]
and so the condition \ref{cond:doub} is satisfied. Therefore every CMC is a WCMC.
\end{remark}

Next we look at the basic properties of weakly controlled Moran constructions. Later on, we will assume more structure for the metric space and different separation conditions for the sets $X_\iii$. This section, however, deals only with results which hold in general.

Define a \emph{projection mapping} $\pi \colon I^\infty \to X$ by setting
\[
\{\pi(\iii)\} := \bigcap_{n = 1}^\infty X_{\iii|_n}
\]
for every $\iii \in I^\infty$. The intersection is non-empty because the sets $X_\iii$ are compact. The set $\pi(I^\infty)$ is called the \emph{limit set} of the WCMC. The usual candidate for the Hausdorff dimension of the limit set $E$ of a WCMC is the zero of the \emph{topological pressure} $P$ given by
\[
P(t) := \lim_{n \to \infty} \frac{1}{n}\log\sum_{\iii \in I^n}\diam(X_\iii)^t
\]
for each $t \ge 0$.
The existence of the defining limit follows by standard arguments from the theory of subadditive sequences.

The topological pressure is a convex function from the interval $\left[0, \infty\right)$ to $\R$ and is therefore automatically continuous outside the point $0$. To see the continuity at $0$ estimate using \ref{cond:doub}
\begin{align*}
 P(t) & = \lim_{n\to \infty}\frac{1}{n}\log\sum_{\iii\in I^n}\diam(X_\iii)^t \ge \lim_{n\to\infty} \frac{1}{n}\log(\sum_{\iii\in I^n}\diam(X_{\iii|_0})^t D^{-nt})\\
& \ge \lim_{n\to\infty} \frac{1}{n}\left(\log \#I^n + \log(\diam(X_{\iii|_0})^t D^{-nt})\right) = P(0) -t \log D \to P(0)
\end{align*}
as $t \to 0$. From the continuity it follows that there is always $t \ge 0$ so that $P(t) = 0$.

\begin{remark}
The condition \ref{cond:doub} is essential for the existence of the zero of the topological pressure. Consider an example with $I = \{1,2\}$ and $\diam(X_\iii) = 2^{-n^2}$ for $\iii \in I^n$. Now $P(0) = \log 2$, but for $t>0$
\[
 P(t) = \lim_{n\to\infty}\frac{1}{n}\log 2^{n-tn^2} = \lim_{n\to\infty}(1-tn) \log 2 = -\infty.
\]
\end{remark}

When using the topological pressure in the proofs, we usually need to move slightly away from the zero of the topological pressure. For doing this we need to observe that the topological pressure is strictly monotone.
\begin{lemma}\label{lemma:strictdec}
Assume that we have a WCMC. Then for $0\le s<t$ we have $P(t) < P(s)$.
\end{lemma}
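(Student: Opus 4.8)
The plan is to exploit the uniform exponential decay of diameters given by Lemma \ref{lem:diamBoundExponential} to show that raising the exponent from $s$ to $t$ multiplies each term of the pressure sum by a factor that decays geometrically in $n$. Concretely, I would fix $0 \le s < t$ and, for each $\iii \in I^n$, write
\[
\diam(X_\iii)^t = \diam(X_\iii)^s \diam(X_\iii)^{t-s}.
\]
Since $t-s>0$, I would then invoke the estimate $\diam(X_\iii) \le c\roo^{|\iii|}$ from Lemma \ref{lem:diamBoundExponential} to bound the extra factor uniformly over $\iii \in I^n$ by $\diam(X_\iii)^{t-s} \le c^{t-s}\roo^{n(t-s)}$.

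Summing over $\iii \in I^n$ gives
\[
\sum_{\iii \in I^n}\diam(X_\iii)^t \le c^{t-s}\roo^{n(t-s)}\sum_{\iii \in I^n}\diam(X_\iii)^s .
\]
Next I would apply $\tfrac{1}{n}\log(\cdot)$ to both sides and let $n \to \infty$. The constant factor contributes $\tfrac{(t-s)\log c}{n} \to 0$, the geometric factor contributes $(t-s)\log\roo$, and the remaining sum tends to $P(s)$ by definition of the topological pressure, so that
\[
P(t) \le P(s) + (t-s)\log\roo .
\]

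Finally, since $0<\roo<1$ we have $\log\roo<0$, and since $t-s>0$ the correction term $(t-s)\log\roo$ is strictly negative; hence $P(t) < P(s)$, as claimed. I do not expect a genuine obstacle here: the only nonelementary input is the geometric decay estimate, which is already available, and the strictness is for free from $\roo<1$. The one point worth confirming is that the argument remains valid at the endpoint $s=0$, where the inner sum is simply $\#I^n$ and $P(0)=\log\#I$; in that case the factorization reduces to $\diam(X_\iii)^{t} = \diam(X_\iii)^{t-s}$ and the same bound applies, so the conclusion goes through unchanged.
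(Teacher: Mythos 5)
Your proof is correct and is essentially the paper's own argument: both factor $\diam(X_\iii)^t = \diam(X_\iii)^s\diam(X_\iii)^{t-s}$ and use the geometric decay $\diam(X_\iii)\le c\roo^{|\iii|}$ from Lemma \ref{lem:diamBoundExponential} to produce the strictly negative correction $(t-s)\log\roo$. The only cosmetic difference is that the paper routes the bound through $\max_{\jjj\in I^n}\diam(X_\jjj)$ before invoking the decay estimate, which changes nothing.
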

\begin{proof}
Using \eqref{eq:geomdecr} we get
\begin{align*}
P(t) &= \lim_{n\to \infty} \frac{1}{n} \log\sum_{\iii\in I^n}\diam(X_\iii)^t\\
&\le \lim_{n \to \infty} \frac{1}{n}\left(\log\sum_{\iii\in I^n}\diam(X_\iii)^s + (t-s)\log\big(\max_{\jjj \in I^n}\diam(X_\jjj)\big)\right)\\
&\le P(s) + (t-s) \log \roo  < P(s)
\end{align*}
as claimed.
\end{proof}

Let us put Lemma \ref{lemma:strictdec} in use by proving an estimate for the Minkowski dimension of the limit set of a WCMC from its topological pressure.

\begin{proposition}\label{prop:UpperBoundForDimm}
 If the topological pressure of a WCMC satisfies $P(t) \le 0$ for a given $t \ge 0$, then we have $\dimm(E) \le t$.
\end{proposition}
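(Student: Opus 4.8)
\emph{The plan.} I would cover $E$ not by a full level $I^n$, but by a single \emph{stopping-time antichain} on which the pieces all have diameter comparable to the cover radius. For small $r>0$ set
\[
 I_r := \{\iii \in I^* : \diam(X_\iii) \le r < \diam(X_{\iii^-})\}.
\]
By \ref{cond:incl} the numbers $\diam(X_{\jjj|_n})$ are non-increasing in $n$ and tend to $0$ by Lemma \ref{lem:diamBoundExponential}, so every $\jjj \in I^\infty$ has a (unique shortest) prefix in $I_r$; hence $E = \pi(I^\infty) \subset \bigcup_{\iii\in I_r} X_\iii$. Each $X_\iii$ with $\iii\in I_r$ has diameter at most $r$ and so lies in a ball of radius $r$ (the open/closed distinction is harmless for $\dimm$), giving $N(E,r) \le \#I_r$.

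Next I would exploit that $I_r$ gives two-sided control of the diameters. For $\iii\in I_r$, condition \ref{cond:doub} yields $\diam(X_\iii) \ge D^{-1}\diam(X_{\iii^-}) > D^{-1}r$, so $\diam(X_\iii)^s > D^{-s}r^s$ for any $s>0$; summing over $I_r$,
\[
 N(E,r) \le \#I_r < D^s r^{-s}\sum_{\iii\in I_r}\diam(X_\iii)^s .
\]
It then remains to control the sum. Combining $\diam(X_\iii)\le c\roo^{|\iii|}$ from Lemma \ref{lem:diamBoundExponential} with the lower bound $\diam(X_\iii)\ge D^{-(|\iii|-1)}\min_{j\in I}\diam(X_j)$ obtained by iterating \ref{cond:doub}, the lengths of all words in $I_r$ lie in an interval $[\underline n(r),\overline n(r)]$ with $\underline n(r)\to\infty$ as $r\to 0$ and $\overline n(r)-\underline n(r)=O(\log(1/r))$.

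This is where Lemma \ref{lemma:strictdec} enters. It suffices to prove $\dimm(E)\le s$ for every $s>t$ and then let $s\downarrow t$. Fixing such an $s$, Lemma \ref{lemma:strictdec} gives $P(s)<P(t)\le 0$, so $\frac1n\log\sum_{\iii\in I^n}\diam(X_\iii)^s\to P(s)<0$ and hence $\sum_{\iii\in I^n}\diam(X_\iii)^s\le 1$ for all large $n$. For $r$ small enough that $\underline n(r)$ exceeds this threshold, grouping $I_r$ by word length gives
\[
 \sum_{\iii\in I_r}\diam(X_\iii)^s \le \sum_{n=\underline n(r)}^{\overline n(r)}\sum_{\iii\in I^n}\diam(X_\iii)^s \le \overline n(r)-\underline n(r)+1 = O(\log(1/r)).
\]
Together with the previous display this yields $N(E,r)=O\big(r^{-s}\log(1/r)\big)$, whence $\dimm(E)=\limsup_{r\downarrow 0}\frac{\log N(E,r)}{-\log r}\le s$, and letting $s\downarrow t$ finishes the argument. (The hypothesis forces $t>0$, since $P(0)=\log\#I>0$, so the borderline case is vacuous.)

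\emph{The main obstacle.} The key is to avoid the naive cover by a whole level $I^n$, which is far too wasteful because inside one level diameters may differ by a factor $D^{n}$; the antichain $I_r$ is precisely what makes each covering ball carry its fair share of the pressure sum, and the estimate $\diam(X_\iii)>D^{-1}r$ coming from \ref{cond:doub} is essential for this. The only genuinely technical point is the bookkeeping showing that the word lengths in $I_r$ fill an interval of length $O(\log(1/r))$ receding to infinity, so that the strict negativity $P(s)<0$ supplied by Lemma \ref{lemma:strictdec} degrades the antichain sum only by a harmless logarithmic factor rather than an exponential one.
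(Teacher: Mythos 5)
Your argument is correct, and it follows the paper's overall strategy — pass to $s>t$, invoke Lemma \ref{lemma:strictdec} to get $P(s)<0$, cover $E$ by construction pieces at scale $r$, and use \ref{cond:doub} to trade $r^s$ for $\diam(X_\iii)^s$ — but the covering family is genuinely different. The paper covers $E$ by the single full level $I^{n_r}$, where $n_r=\max\{n:\diam(X_\iii)\ge r \text{ for all }\iii\in I^n\}$; this makes the pressure estimate immediate, since $\sum_{\iii\in I^{n_r}}\diam(X_\iii)^s\le e^{cn_r}\to 0$ with a single $c<0$, so no logarithmic factor appears and the bound $N(E,r)r^s\to0$ is clean. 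The price is that the level-$n_r$ pieces have diameter \emph{at least} $r$ by construction, so the step $N(E,r)\le\#I^{n_r}$ is exactly the point where the paper is terse: a piece of diameter larger than $r$ need not fit in one ball of radius $r$. Your antichain $I_r$ (which is precisely the paper's $Z(r)$ from Section \ref{section:separation}) resolves that tension by design — each piece has diameter at most $r$ (so it covers with one ball) and, via \ref{cond:doub}, at least $D^{-1}r$ (so the comparison $r^s\le D^s\diam(X_\iii)^s$ holds) — at the cost of summing over $O(\log(1/r))$ levels, which is harmless for the Minkowski dimension. So your route buys an airtight covering-number bound where the paper's is slicker but needs a word of justification; the bookkeeping on the length window $[\underline n(r),\overline n(r)]$ and the final $\log$-factor absorption are both carried out correctly.
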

\begin{proof}
 Take $s > t$. From Lemma \ref{lemma:strictdec} we see that $P(s)<0$. Therefore there exist $c<0$ and $n_0 \in \mathbb{N}$ so that
\[
 \frac{1}{n}\log\sum_{\iii \in I^n}\diam(X_\iii)^s < c
\]
with every $n \ge n_0$. Thus,
\[
\sum_{\iii \in I^n}\diam(X_\iii)^s < e^{cn}.
\]
By the repeated use of condition \ref{cond:doub} we see that for any $\iii \in I^*$
\begin{equation}\label{eq:growthlb}
 \diam(X_\iii) \ge D^{-|\iii|+1}\min_{j \in I}\diam(X_j).
\end{equation}
Now given $0 < r \le \min_{j \in I}\diam(X_j)$ define 
\[
n_r := \max\{n \in \N : \diam(X_\iii) \ge r \text{ for every } \iii \in I^n\}.
\]
Then by \eqref{eq:growthlb} we have
\[
N(E,r)r^s \le \sum_{\iii \in I^{n_r}}r^s \le \sum_{\iii \in I^{n_r}}\diam(X_\iii)^s \le e^{cn_r} \to 0
\]
as $r \downarrow 0$ and, consequently $\dimm(E) \le s$.
\end{proof}

A useful tool for studying the dimension of the limit set of WCMC is the following collection of measures $M^\psi$ which we obtain by using the well known Carath\'eodory's construction. Let $\psi \colon I^* \to \left[0,\infty\right[$ be a mapping such that $\max\{\psi(\iii) : \iii \in I^n\}\to 0$ as $n \to \infty$. Define for every $A \subset I^\infty$
\[
 M_n^\psi(A) := \inf\Big\{\sum_{\iii\in C}\psi(\iii) : C \subset I^*, A \subset \bigcup_{\iii \in C}[\iii], |\iii| \ge n\Big\}
\]
and from this
\[
 M^\psi(A) := \lim_{n \to \infty} M_n^\psi(A).
\]
In the case $\psi(\iii) = \diam(X_\iii)^t$ we write $M^t = M^\psi$. Notice that although the measures $M^t$ look like Hausdorff measures, they live on the symbol space $I^\infty$ and, without any separation condition for the sets $X_\iii$, they can not necessarily be pushed to be Hausdorff measures on a subset of the actual metric space $M$. 

On the symbol space  we have the following connection between the topological pressure and the measures $M^t$.
\begin{lemma}\label{lemma:1}
Given a WCMC and any $t\ge 0$ satisfying $P(t) \ge 0$, we have $M^t(I^\infty) > 0$. %Assume $t\ge 0$. Then $P(t) \ge 0$ implies $M^t(I^\infty) > 0$ for a WCMC.
\end{lemma}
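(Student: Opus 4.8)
The plan is to prove the estimate by the \emph{mass distribution principle} on the symbol space $I^\infty$. Concretely, I will produce a Borel probability measure $\mu$ on $I^\infty$ together with a finite constant $K$ for which $\mu([\iii]) \le K \diam(X_\iii)^t$ holds for every $\iii \in I^*$. Granting such a $\mu$, the lemma follows quickly: since $(I^\infty,d_2)$ is compact and the cylinders are open, every cover of $I^\infty$ admitted in the definition of $M^t_n$ has a finite subcover, and because the metric is an ultrametric any two cylinders are nested or disjoint, so this subcover may be thinned to a finite family $C$ of pairwise incomparable words of length at least $n$. For such a family,
\[
1 = \mu(I^\infty) \le \sum_{\iii \in C}\mu([\iii]) \le K \sum_{\iii \in C}\diam(X_\iii)^t,
\]
so that $M^t_n(I^\infty) \ge K^{-1}$ for every $n$, and hence $M^t(I^\infty) = \lim_{n\to\infty}M^t_n(I^\infty) \ge K^{-1} > 0$.

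To construct $\mu$, abbreviate $S_n := \sum_{\iii \in I^n}\diam(X_\iii)^t$ and define probability measures $\mu_n$ on $I^\infty$ by giving each cylinder $[\iii]$ with $|\iii| = n$ the mass $\diam(X_\iii)^t / S_n$. By compactness of $I^\infty$ I pass to a weak-$*$ convergent subsequence $\mu_{n_j} \to \mu$; since every cylinder has empty boundary it is both open and closed, so its indicator is continuous and the cylinder masses converge, $\mu([\iii]) = \lim_{j}\mu_{n_j}([\iii])$. For $n \ge |\iii| =: k$ condition \ref{cond:cover} yields the basic estimate
\[
\mu_n([\iii]) = \frac{1}{S_n}\sum_{\hhh \in I^{n-k}}\diam(X_{\iii\hhh})^t \le \frac{D^t\diam(X_\iii)^t}{S_n}\sum_{\hhh \in I^{n-k}}\diam(X_\hhh)^t = D^t\diam(X_\iii)^t\,\frac{S_{n-k}}{S_n},
\]
so the whole problem is reduced to bounding the ratio $S_{n-k}/S_n$ from above, uniformly in both $n$ and $k$.

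This is the point at which the hypothesis $P(t) \ge 0$ is used. Condition \ref{cond:cover} shows that $n \mapsto D^t S_n$ is submultiplicative, whence $P(t) = \inf_n \frac{1}{n}\log(D^t S_n)$, and $P(t) \ge 0$ forces $S_n \ge D^{-t}$ for every $n$; thus the normalizing denominators cannot decay. To turn this into a bound uniform in $k$ I would split into two cases according to the behaviour of $(S_n)$. If $\sup_n S_n =: L < \infty$, then for every $n \ge k$ one has $S_{n-k}/S_n \le L S_n^{-1} \le L D^{t}$, so any weak-$*$ limit $\mu$ satisfies $\mu([\iii]) \le L D^{2t}\diam(X_\iii)^t$. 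If instead $\sup_n S_n = \infty$, I would choose $(n_j)$ along the \emph{running maxima} of $(S_n)$, that is, with $S_{n_j} = \max_{1 \le m \le n_j}S_m$; such times occur infinitely often because the sequence is unbounded, and for them $S_{n_j - k}/S_{n_j} \le 1$ for all $k$, giving $\mu([\iii]) \le D^{t}\diam(X_\iii)^t$. In either case the required uniform inequality $\mu([\iii]) \le K\diam(X_\iii)^t$ holds with $K = \max\{D^t, L D^{2t}\}$.

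The main obstacle, and the reason the argument needs this extra care, is precisely the uniform control of $S_{n-k}/S_n$ over all word lengths $k$. A weakly controlled Moran construction provides, through \ref{cond:cover}, only submultiplicativity of the sums $S_n$ and no matching supermultiplicativity, so $(S_n)$ may oscillate and the crude per-symbol bound coming from \ref{cond:doub} degrades geometrically in $k$; it is the dichotomy above, together with the lower bound $S_n \ge D^{-t}$ extracted from $P(t) \ge 0$, that removes this dependence on $k$. Once the uniform estimate $\mu([\iii]) \le K\diam(X_\iii)^t$ is in hand, the mass distribution principle of the first paragraph finishes the proof.
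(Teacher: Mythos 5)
Your proof is correct, but it takes a genuinely different route from the paper's. The paper argues by contradiction: if $M^t(I^\infty)=0$, compactness of $I^\infty$ yields a finite cover $Q\subset I^*$ with $\sum_{\iii\in Q}\diam(X_\iii)^s$ small for some $s<t$; iterating this cover over the words $Q^*$ and filling in arbitrary words with the help of \ref{cond:doub} gives $P(s)\le 0$, which contradicts the strict monotonicity of the pressure (Lemma \ref{lemma:strictdec}). You instead build a Frostman measure directly as a weak-$*$ limit of the measures assigning mass $\diam(X_\iii)^t/S_n$ to level-$n$ cylinders, extract the uniform lower bound $S_n\ge D^{-t}$ from Fekete's lemma applied to the submultiplicative sequence $D^tS_n$, and control the ratio $S_{n-k}/S_n$ via the bounded/unbounded dichotomy; the only loose point is that in the unbounded case you must first restrict to the record times and only then pass to a weak-$*$ convergent subsequence, a harmless reordering of your write-up (and the degenerate case $n=k$ is handled directly by $\mu_n([\iii])=\diam(X_\iii)^t/S_n\le D^t\diam(X_\iii)^t$). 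What your route buys: it uses only \ref{cond:cover} (neither \ref{cond:doub} nor the strict decrease of $P$ enters), it gives the explicit bound $M^t(I^\infty)\ge K^{-1}$, and it produces outright a measure $\mu$ with $\mu([\iii])\le K\diam(X_\iii)^t$ for all $\iii\in I^*$ --- precisely what the paper obtains only afterwards by feeding Lemma \ref{lemma:1} into the Hahn--Banach construction of Proposition \ref{prop:Frostman}, so in applications such as Proposition \ref{prop:fctoposfinite} your measure could be used directly. What the paper's route buys is brevity: it avoids the case analysis and the weak-$*$ compactness machinery at the cost of invoking the auxiliary monotonicity lemma.
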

\begin{proof}
Assume, to the contrary, that $M^t(I^\infty) = 0$. Because $I^\infty$ is compact, there exists a finite set $Q \subset I^*$ and $s < t$ such that $I^\infty \subset \bigcup_{\iii \in Q}[\iii]$ and
\[
\sum_{\iii \in Q} \diam(X_\iii)^s < (2D^s)^{-1}.
\]
Therefore from \ref{cond:cover} we get
\begin{align*}
\sum_{\iii \in Q^*} \diam(X_\iii)^s & = \sum_{n =1}^\infty\sum_{\iii \in Q^n} \diam(X_\iii)^s 
\le \sum_{n =1}^\infty \left(\sum_{\iii \in Q}  D^s \diam(X_\iii)^s\right)^n\\ 
& \le \sum_{n =1}^\infty 2^{-n} = 1. 
\end{align*}
Denote $q = \max\{|\iii|:\iii \in Q\}$.
If now $\iii\in I^*$, there exists $\jjj \in Q^*$ and $\kkk \in I^*$ with $|\kkk| \le q$ so that $\iii\kkk = \jjj$. Hence for any $n \ge 1$ we get by using \ref{cond:doub}
\[
\sum_{\iii \in I^n}\diam(X_\iii)^s \le D^{qs}\sum_{\jjj\in Q^*}\diam(X_\jjj)^s \le D^{qs}.
\]
Thus $P(s) \le 0$. Because $P(t) < P(s)$ by Lemma \ref{lemma:strictdec}, we have arrived at a contradiction.
\end{proof}

The transition from the measure $M^t$ to a more suitable Borel measure $\mu$ will be done with the following version of Frostman's lemma. Regarding the proof, the idea of using standard techniques from functional analysis is due to J. D. Howroyd, see \cite[Theorem 2]{H1995}. The main part of the proof presented here is quite analogous to the proof of Frostman's lemma for standard Hausdorff measures given in \cite[Theorem 8.17]{M1995}. However, we give the details for the benefit of the reader.

\begin{proposition}\label{prop:Frostman}
Let $\psi \colon I^* \to [0,\infty)$ be a mapping such that $\max\{\psi(\iii) : \iii \in I^n\}\to 0$ as $n \to \infty$. Given $A \subset I^\infty$ with $M^\psi(A) > 0$, there exists an integer $n_0$ and a Borel measure $\mu$ on $I^\infty$ such that $0 < \mu(A)<\infty$ and
\[
 \mu(\left[\iii\right]) \le \psi(\iii)
\]
for every $\iii \in I^*$ with $|\iii| \ge n_0$. If $\psi(\iii) > 0$ for every $\iii \in I^*$, we can choose $n_0=1$.
\end{proposition}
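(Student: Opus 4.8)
The plan is to produce $\mu$ as the Riesz representation of a positive linear functional on $C(I^\infty)$ obtained through the Hahn--Banach theorem, following the functional-analytic scheme of Howroyd \cite{H1995}; the symbol space is especially convenient here because the cylinders $[\iii]$ are clopen, so every indicator $\chi_{[\iii]}$ belongs to $C(I^\infty)$, and because $I^\infty$ is compact. First I would reduce to the case of compact $A$: since $M^\psi$ is a Borel-regular metric outer measure, $M^\psi(A)>0$ yields a compact $K\subseteq A$ with $M^\psi(K)>0$, and proving the statement for $K$ gives it for $A$ by monotonicity, because $\mu(A)\ge\mu(K)>0$ while $\mu(A)\le\mu(I^\infty)<\infty$ (and the cylinder bounds do not refer to $A$). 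Using $\max\{\psi(\iii):|\iii|=n\}\to 0$ and the fact that $M^\psi(A)=\sup_n M_n^\psi(A)$, I would then fix $n_0$ so large that $M_{n_0}^\psi(A)>0$; this is the $n_0$ of the statement.

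The core is the sublinear functional $p$ on $C(I^\infty)$ measuring the cheapest $\psi$-weighted domination of a function, on the (now compact) set $A$, by cylinder indicators of length at least $n_0$:
\[
 p(f) := \inf\Big\{\textstyle\sum_{\iii \in C} c_\iii \psi(\iii) : C \subset I^* \text{ finite},\ |\iii| \ge n_0,\ c_\iii \ge 0,\ f \le \sum_{\iii \in C} c_\iii \chi_{[\iii]} \text{ on } A\Big\}.
\]
I would verify that $p$ is finite-valued, positively homogeneous and subadditive; finiteness uses that the finitely many length-$n_0$ cylinders cover the compact space $I^\infty$, so any $f$ is dominated by a large multiple of their indicators. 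The trivial competitor $C=\{\iii\}$, $c_\iii=1$ gives the upper bound $p(\chi_{[\iii]})\le\psi(\iii)$ for $|\iii|\ge n_0$. The crucial estimate is the lower bound $p(1)\ge M_{n_0}^\psi(A)>0$: any admissible domination of the constant $1$ on $A$ forces the cylinders with $c_\iii>0$ to cover the compact set $A$, and since the cylinders through a point form a nested chain, the associated covering linear program is integral, so the weighted cost is at least that of an honest cover, namely $M_{n_0}^\psi(A)$.

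Granting these facts, I would define a functional on the constants by $1\mapsto p(1)$, check it is dominated by $p$ there, and use Hahn--Banach to extend it to $L$ on $C(I^\infty)$ with $L\le p$. Because $p(f)\le 0$ whenever $f\le 0$ on $A$, the functional $L$ is positive, and the Riesz representation theorem gives a finite Borel measure $\mu$ with $L(f)=\int f\,d\mu$. Reading off $\mu([\iii])=L(\chi_{[\iii]})\le p(\chi_{[\iii]})\le\psi(\iii)$ for $|\iii|\ge n_0$ gives the cylinder bounds. For the mass on $A$, note that if $x\notin A$ then by Urysohn's lemma there is $f\ge 0$ in $C(I^\infty)$ with $f(x)=1$ and $f=0$ on $A$, whence $p(f)=0$ and $\int f\,d\mu=0$; thus $\mu$ is supported on $A$ and $\mu(A)=\mu(I^\infty)=L(1)=p(1)>0$.

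Finally, for the last assertion suppose $\psi(\iii)>0$ for all $\iii\in I^*$. Then the finitely many ratios $\mu([\iii])/\psi(\iii)$ with $|\iii|<n_0$ are finite, and replacing $\mu$ by $\mu$ divided by their maximum (when this exceeds $1$) keeps $\mu([\iii])\le\psi(\iii)$ for $|\iii|\ge n_0$, enforces it for $|\iii|<n_0$, and leaves $\mu(A)>0$, so one may take $n_0=1$. I expect the main obstacle to be the strictly positive lower bound $p(1)>0$, equivalently the guarantee that the extended functional genuinely charges $A$ rather than collapsing to zero; this is the analogue of the hard half of Frostman's lemma \cite[Theorem 8.17]{M1995}, and it is precisely where compactness of $I^\infty$ (reducing covers to finite ones) and the nested structure of the cylinders (making the covering problem integral) are indispensable.
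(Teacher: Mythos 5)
Your proposal follows essentially the same route as the paper: Howroyd's functional-analytic scheme, with the sublinear functional $p$ built from $\psi$-weighted dominations of $f|_A$ by cylinder indicators, Hahn--Banach extension from the constants, Riesz representation, and the two key inequalities $\mu([\iii])\le p(\chi_{[\iii]})\le\psi(\iii)$ and $p(\mathbf{1})\ge M_{n_0}^\psi(A)>0$. Three points of comparison are worth recording. First, your opening reduction to a compact $K\subseteq A$ is not in the paper and is not free: inner regularity of the Carath\'eodory-constructed outer measure $M^\psi$ by compact sets is available for Borel (or analytic) $A$, but the statement allows arbitrary $A\subset I^\infty$, and for such sets a compact subset of positive measure need not exist (Bernstein-type examples). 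The paper avoids the reduction by allowing countable weighted families in the definition of $p$ and never invoking closedness of $A$; note, though, that you genuinely need $A$ closed for your Urysohn argument that $\mu$ charges $A$, and the paper's own assertion $\mu(A)=M_{n_0}^\psi(A)$ is equally transparent only for closed $A$ --- in every application in the paper $A$ is $I^\infty$ or compact, so neither version loses anything there. Second, your handling of the final claim ($n_0=1$ when $\psi>0$ everywhere) by rescaling $\mu$ with the maximum of the finitely many ratios $\mu([\iii])/\psi(\iii)$, $|\iii|<n_0$, is correct and is a genuinely different, arguably slicker, argument than the paper's, which instead proves directly that $M_1^\psi(A)>0$ (a cover of total weight below $\min_{|\iii|\le n}\psi(\iii)$ can only use words of length greater than $n$, which would force $M_n^\psi(A)\to 0$). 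Third, you correctly identify $p(\mathbf{1})\ge M_{n_0}^\psi(A)$ as the crux and sketch the laminar-family integrality argument for it; the paper simply asserts the equality $p(\mathbf{1})=M_{n_0}^\psi(A)$, so on this point your write-up is the more explicit one, though the integrality claim should be proved rather than cited as a general fact about nested set systems.
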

\begin{proof}
Let $n_0\in \N$ be so large that $M_{n_0}^\psi(A) > 0$. Define a function $p$ on $C(I^\infty)$ (the space of continuous real-valued functions on $I^\infty$) by
\[
 p(f) = \inf \sum_\iii c_\iii \psi(\iii)
\]
where the infimum is taken over all finite or countable families $\{(\iii,c_\iii)\}$ such that $0 < c_\iii < \infty$, $|\iii|\ge n_0$ and
\[
 f|_A \le \sum_\iii c_\iii \chi_{[\iii]}.
\]
For $f, g \in C(I^\infty)$ and $t \ge 0$ we have $p(t f) = t p(f)$ and $p(f+g) \le p(f) + p(g)$. Let $\mathbf{1}$ denote the constant function from $I^\infty$ to reals with $\mathbf{1}(I^\infty) = \{1\}$. By the Hahn-Banach theorem (in the form presented e.g. in \cite[Theorem 3.2]{R1973}), we can extend the linear functional $c\mathbf{1} \mapsto c p(\mathbf{1})$, $c \in \R$, from the subspace of constant functions to a linear functional $L \colon C(I^\infty) \to \R$ satisfying $L(\mathbf{1}) = p(\mathbf{1}) = M_{n_0}^\psi(A)$ and
\[
 -p(-f) \le L(f) \le p(f) \text{ for } f \in C(I^\infty).
\]
If $f \ge 0$, $p(-f) = 0$ and so $L(f) \ge 0$. Hence by the Riesz representation theorem there exists a Borel measure $\mu$ on $I^\infty$ such that $L(f) = \int f d\mu$ for $f \in C(I^\infty)$. Because $\chi_{[\iii]} \in C(I^\infty)$ for every $\iii \in I^*$ we have
\[
 \mu([\iii]) = \int \chi_{[\iii]}(x)\,d\mu(x) = L(\chi_{[\iii]}) \le p(\chi_{[\iii]}) \le \psi(\iii)
\]
when $|\iii|\ge n_0$. Also $\mu(A) = M_{n_0}^\psi(A)$, which is clearly positive and finite.

Now assume that $\psi(\iii) > 0$ for each $\iii \in I^*$. To finish the proof, it suffices to show that $M_1^\psi(A) > 0$. We do this by assuming the contrary and deriving a contradiction. Let $\varepsilon_n = \min_{|\iii| \le n}\psi(\iii)$ for $n\in\N$. If $M_1^\psi(A) = 0$ then for each $n\in\N$ there exists $C_n \subset I^*$ such that $A \subset \bigcup_{\iii \in C_n} [\iii]$ and $\sum_{\iii\in C_n} \psi(\iii) < \varepsilon_n$. But then $\psi(\iii) < \varepsilon_n$ for each $\iii \in C_n$, so that $|\iii| > n$ for every $\iii \in C_n$ which draws us to conclude that
\[
  M_n^\psi(A) \le \sum_{\iii\in C_n} \psi(\iii) < \varepsilon_n \xrightarrow{n\to\infty} 0.
\]
This however contradicts the positivity of $M^\psi(A)$. Hence $M_1^\psi(A) > 0$.
\end{proof}

\section{Separation conditions for Moran constructions}\label{section:separation}

With any $n\in \N$ the sets $X_\iii$, $\iii \in I^n$, of a WCMC can have very different diameters. Therefore we define for $r > 0$
\[
 Z(r) := \{\iii \in I^* : \diam(X_\iii) \le r < \diam(X_{\iii^-})\}.
\]
Then each $X_\iii$ with $\iii \in Z(r)$ is a set of roughly diameter $r$. Also notice that $\iii \perp \jjj$ for
two distinct $\iii,\jjj \in Z(r)$. We define a local version of this for every $r>0$ and $x \in E$ as
\[
 Z(x,r) := \{\iii \in Z(r) : X_\iii \cap B(x,r) \ne \emptyset\}.
\]

Now we are ready to pass to the actual metric space and look for conditions on the sets $X_\iii$ which imply estimates on the Hausdorff measures.
A WCMC has the \emph{finite clustering property} if
\[
\sup_{x \in E}\limsup_{r \downarrow 0}\#Z(x,r) < \infty.
\]
This property is a sufficient separation condition to guarantee the positivity of the Hausdorff measure of the limit set of a WCMC.

\begin{proposition}\label{prop:fctoposfinite}
 Assume that for a WCMC the finite clustering property holds and $P(t) \ge 0$. Then $\mathcal{H}^t(E)>0$. Moreover, $\HH^t(E) < \infty$ if and only if $M^t(I^\infty) < \infty$.
\end{proposition}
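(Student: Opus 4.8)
The plan is to transfer the measure $M^t$ from the symbol space $I^\infty$ to the limit set $E$ via the projection $\pi$, and then use the finite clustering property to convert the cylinder estimate $\mu([\iii]) \le \diam(X_\iii)^t$ (supplied by Frostman's lemma) into a mass bound on metric balls, which is exactly what a density/mass-distribution argument needs to estimate $\HH^t$ from below.

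Concretely, here is the outline. Since $P(t) \ge 0$, Lemma \ref{lemma:1} gives $M^t(I^\infty) > 0$, so Proposition \ref{prop:Frostman} (applied with $\psi(\iii) = \diam(X_\iii)^t$ and $A = I^\infty$) yields an integer $n_0$ and a Borel measure $\mu$ on $I^\infty$ with $0 < \mu(I^\infty) < \infty$ and $\mu([\iii]) \le \diam(X_\iii)^t$ for all $\iii$ with $|\iii| \ge n_0$. I would push $\mu$ forward to a Borel measure $\nu := \pi_*\mu$ on $E$, which is nontrivial and finite. The goal is then to bound $\nu(B(x,r))$ from above by a constant multiple of $r^t$. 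Fix $x \in E$ and a small $r$. The key observation is that the cylinders $[\iii]$ with $\iii \in Z(x,r)$ cover $\pi^{-1}(B(x,r))$, or at least the part of it contributing to $\nu$; hence

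First I would estimate

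\[
\nu(B(x,r)) = \mu(\pi^{-1}(B(x,r))) \le \sum_{\iii \in Z(x,r)} \mu([\iii]) \le \sum_{\iii \in Z(x,r)} \diam(X_\iii)^t \le \#Z(x,r)\, r^t,
\]

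where the last step uses that every $\iii \in Z(r)$ satisfies $\diam(X_\iii) \le r$ by definition of $Z(r)$. The finite clustering property bounds $\#Z(x,r)$ by a uniform constant $N$ for all small $r$, giving $\nu(B(x,r)) \le N r^t$. A standard mass distribution principle (comparing $\nu$ with $\HH^t$ via the density bound) then yields $\HH^t(E) \ge c\,\nu(E) > 0$, proving positivity. For the ``moreover'' part, the forward direction is immediate: if $\HH^t(E) < \infty$ then, since $\pi$ does not increase diameters too wildly (the cylinders map into sets of comparable diameter), one transports an economical cover of $E$ back to the symbol space to bound $M^t(I^\infty)$; conversely, if $M^t(I^\infty) < \infty$, any efficient symbolic cover $\{[\iii]\}$ projects to a cover $\{X_\iii\}$ of $E$ with $\sum \diam(X_\iii)^t$ controlled, giving $\HH^t(E) \le C\, M^t(I^\infty) < \infty$.

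The main obstacle I anticipate is the first inequality in the displayed chain, namely justifying that $\{[\iii] : \iii \in Z(x,r)\}$ genuinely covers $\pi^{-1}(B(x,r))$ up to a $\mu$-null set. If $\jjj \in \pi^{-1}(B(x,r))$, then $\pi(\jjj) \in X_{\jjj|_m}$ for all $m$, and for the unique prefix $\iii = \jjj|_m \in Z(r)$ we have $\pi(\jjj) \in X_\iii \cap B(x,r)$, so $\iii \in Z(x,r)$ and $\jjj \in [\iii]$; thus the covering is in fact exact, which is cleaner than I first feared. A more delicate technical point is the subadditivity step: I must take care that the sets $[\iii]$, $\iii \in Z(x,r)$, are pairwise disjoint (they are, since distinct elements of $Z(r)$ are incomparable), so the inequality $\mu(\bigcup [\iii]) \le \sum \mu([\iii])$ is actually an equality and no loss occurs. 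The remaining care is ensuring $|\iii| \ge n_0$ for all $\iii \in Z(x,r)$ when $r$ is small enough, which follows from Lemma \ref{lem:diamBoundExponential} since $\diam(X_\iii) \le r$ forces $|\iii| \to \infty$ as $r \downarrow 0$; this is where condition \ref{cond:doub} (and hence the WCMC structure) does essential work.
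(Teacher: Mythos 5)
Your proposal is correct and follows essentially the same route as the paper: Lemma \ref{lemma:1} plus Proposition \ref{prop:Frostman} produce the measure $\mu$ with $\mu([\iii])\le\diam(X_\iii)^t$, the exact covering $\pi^{-1}(B(x,r))\subset\bigcup_{\iii\in Z(x,r)}[\iii]$ converts this into a ball estimate, and the finite clustering property bounds the number of summands; the two directions of the ``moreover'' part are also handled as in the paper. The only point to be careful about is that the finite clustering property bounds $\#Z(x,r)$ by $K$ only for $r$ below a threshold depending on $x$, so you must use the density (pointwise $\limsup$) form of the mass distribution principle rather than a uniform-radius version --- this is precisely what the paper's exhaustion of $E$ by the sets $E_k$ carries out by hand.
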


\begin{proof}
 Because $P(t) \ge 0$, Lemma \ref{lemma:1} gives $M^t(I^\infty) > 0$. Therefore from Proposition \ref{prop:Frostman} we see that there exists a Borel measure $\mu$ on $I^\infty$ such that $0 < \mu(I^\infty)<\infty$ and
\[
 \mu([\iii]) \le \diam(X_\iii)^t
\]
for every $\iii \in I^*$. Let
\[
K = \sup_{x \in E}\limsup_{r \downarrow 0}\#Z(x,r).
\]
Take $k \in \N$ and define
\[
E_k = \{x \in E : \#Z(x,r) \le K \text{ for every } 0 < r < \frac{1}{k}\}.
\]
Choose any collection of sets $A_i \subset M$, $i \in \N$, for which $\diam(A_i)< \frac{1}{k}$, $A_i\cap E_k \ne \emptyset$ and $E_k \subset \bigcup_{i=1}^\infty A_i$. Fix for each $i \in \N$ a point $x_i \in A_i \cap E_k$. Now we can estimate
\begin{align*}
\mu \circ \pi^{-1}(E_k) & \le \sum_{i=1}^\infty\mu \circ \pi^{-1}(B(x_i,\diam(A_i)))\\
& \le \sum_{i=1}^\infty\sum_{\iii \in Z(x_i, \diam(A_i))} \mu(\left[\iii\right]) \le \sum_{i=1}^\infty K \diam(A_i)^t.
\end{align*}
Therefore by letting $k \to \infty$ we get
\[
\HH^t(E) \ge \frac{1}{K} \mu(I^\infty) > 0
\]
and the first claim is proved.

Suppose $M^t(I^\infty) < \infty$. Because of \eqref{eq:geomdecr} the sets $X_\iii$ serve as covering sets $A_i$ when calculating the Hausdorff measure. Hence $\HH^t(E) < \infty$.

Assume then that $\HH^t(E) < \infty$. Take $n \in \N$ and let $k \in \N$ be so large that $\frac{1}{k} < \diam(X_\iii)$ for every $\iii \in I^n$.
Define $E_k$ as before and take any collection of sets $A_i \subset M$, $i \in \N$, for which $\diam(A_i)< \frac{1}{k}$, $A_i\cap E_k \ne \emptyset$ and $E_k \subset \bigcup_{i=1}^\infty A_i$. Choose for each $i \in \N$ a point $x_i \in A_i \cap E_k$. Now 
\[
\pi^{-1}(E_k) \subset \bigcup_{\substack{i\in \N\\\iii \in Z(x_i, \diam(A_i))}}\left[\iii\right].
\]
Since $\diam(X_\iii) \le \diam(A_i) < \frac{1}{k}$ for $\iii \in Z(x_i, \diam(A_i))$ and thus $|\iii| \ge n$, we have
\[
M_n^t(\pi^{-1}(E_k)) \le K \sum_{i=1}^\infty \diam(A_i)^t.
\]
Therefore $M^t(\pi^{-1}(E_k)) \le K \HH^t(E_k)$. Because $E_{k_1} \subset E_{k_2}$ for $0 < k_2 < k_1$, we get
\[
M^t(I^\infty) = \lim_{k \to \infty} M^t(\pi^{-1}(E_k)) < K\HH^t(E)
\]
which completes the proof.
\end{proof}

By combining Proposition \ref{prop:fctoposfinite} with Proposition \ref{prop:UpperBoundForDimm} we see that 
\[
\dimh(E) = \dimm(E) = P^{-1}(0)
\]
for the limit set $E$ of a WCMC with the finite clustering property. In the Euclidean case this follows alternatively from a result by L. Barreira. Although in \cite[Theorem 2.1]{B1996} he assumed a stronger separation condition, he only needed the finite clustering property for the construction in the proof of \cite[Theorem 2.1(b)]{B1996}.

For the limit set $E$ of a WCMC it is not generally true that $\HH^t(E) < \infty$ when $P(t)=0$. This can be seen from the following example.

\begin{example}\label{ex:superCantor}
Take $I=\{1,2\}$ and define $\diam(X_\iii) = \frac{1}{2}$ for $\iii \in I$ and $\diam(X_\iii) = 2^{-2+\frac{1}{n}}\diam(X_{\iii^-})$ for $\iii \in I^*\setminus I$. Now
\[
 P(t) = \lim_{n \to \infty}\frac{1}{n}\Big(n-t\Big(2n-\sum_{k=1}^n\frac{1}{k}\Big)\Big)\log 2 = (1-2t)\log 2
\]
Therefore $P(\frac{1}{2}) = 0$. On the other hand, one can construct a Cantor set $E$ on $\R$ using such construction pieces to obtain $\HH^\frac{1}{2}(E) = \infty$, see \cite{QRS2001} for an exact formula for the Hausdorff measure of such constructions. Notice that the construction given there has the finite clustering property.
\end{example}

\begin{remarks}
(i) With a proof similar to that of Proposition \ref{prop:fctoposfinite} we can improve a result \cite[Theorem 5.1]{KV2009} on sub-self-affine sets. Namely, for a tractable sub-self-affine set $E_K$ in $\R^n$ having the finite clustering property we have $\HH^s(E_K) > 0$ when $P_K(s)=0$. (See \cite{KV2009} for the definition of a tractable sub-self-affine set.) Previously it was shown in \cite{KV2009} that $\dimh(E_K) = \dimm(E_K) = s$.

Self-affinity means that the constructing sets $X_\iii$ are obtained by iterating affine mappings $\{f_1, \dots, f_N\}$. The compact set $K \subset I^\infty$, referring to the prefix \emph{sub}, is assumed to be such that for every $(i_1, i_2, \dots) \in K$ also $(i_2, i_3, \dots) \in K$. The sub-self-affine set is defined as $E_K = \pi(K)$. Tractability for the sub-self-affine set is a condition which guarantees that the diameters of the constructing sets are comparable to the largest singular values of the constructing affine mappings, see \cite[Lemma A.3]{KS2009}.

In this setting the proof of Lemma \ref{lemma:1} can still be carried out and using $A = K$ in Proposition \ref{prop:Frostman} gives a measure $\mu$ on $K$ such that $0<\mu(K)<\infty$ and $\mu([\iii|_n]) \le \diam(X_{\iii|_n})^s$ for every $\iii \in K$ and $n \in \N$. The improvement is then finished with a similar use of the finite clustering property as in Proposition \ref{prop:fctoposfinite}.

(ii) Self-affine constructions are an important subclass of WCMC. They can have, similarily to the Example \ref{ex:superCantor}, $\HH^t(E)= \infty$ when $P(t) = 0$. Take for example a set in $\R^2$ constructed using two affine mappings $f_1((x,y)) = \lambda (x, x+y)$ and $f_2((x,y)) = \lambda (x, x+y) + (1,1)$ with some fixed $0<\lambda < \frac{1}{2}$. The fact that $\HH^t(E)= \infty$ follows by observing that $\diam(X_\iii)$ is essentially $|\iii|\lambda^{|\iii|}$ and that there is enough separation among the construction pieces, see \cite[Example 6.4]{KV2009}.
\end{remarks}

The finite clustering property is not always easy to check. Therefore we make the following definition. A WCMC satisfies the \emph{ball condition} if there exists a constant $0 < \delta < 1$ such that for every $x \in E$ we can take a radius $r_x>0$ so that with every $0 < r < r_x$ there is a set $\{x_\iii : \dist(x_\iii,X_\iii) < r, \iii \in Z(x,r)\}$ for which the collection $\{B(x_\iii, \delta r) : \iii \in Z(x,r)\}$ is pairwise disjoint.

We give now a basic example of a WCMC on the Euclidean plane $\R^2$ which, in general, is  not a CMC. 

\begin{example}
We define a self-affine set $E$ using two affine mappings. Choose $0<a_0,a_1,b_0,b_1<1$ so that
$a_0 + a_1 \le 1$ and  $b_0+b_1 \le 1$. Let $c = 1-a_1$ and $d=1-b_1$. We define $f_0,f_1 \colon \R^2 \to \R^2$ by setting
\begin{align*}
  f_0(x,y) &= (a_0  x, b_0 y), \\
  f_1(x,y) &= (a_1 x + c, b_1 y + d)
\end{align*}
for  $x,y \in \R$. The unit square $Q = [0,1] \times [0,1]$ is mapped into itself by these mappings with $f_0(Q) = [0,a_0]\times [0,b_0]$ and $f_1(Q) = [1-a_1,1]\times [1-b_1,1]$. We let $I = \{0,1\}$ and for each $\iii = (i_1, i_2, \dots i_k)$, $k \in \N$, define
\[
X_\iii = f_{i_1}\circ f_{i_2} \circ \dotsc \circ f_{i_k}(Q)
\]
which is a rectangle of width $a_\iii = a_{i_1}a_{i_2}\cdots a_{i_k}$ and height $b_\iii = b_{i_1}b_{i_2}\cdots b_{i_k}$. To see that $\{X_\iii : \iii \in I^*\}$ is a WCMC, note that
\[%\begin{equation}\label{eq:diambounds}
  \tfrac 12 (a_\iii + b_\iii) \le \max\{a_\iii, b_\iii\} < \diam(X_\iii) = \sqrt{a_\iii^2 + b_\iii^2} < a_\iii + b_\iii
\]%\end{equation}
for $\iii \in I^*$, from which \ref{cond:cover} and \ref{cond:doub} easily follow (for a suitably large $D>1$). Conditions \ref{cond:incl} and \ref{cond:decr} are trivial to check.

Let $x_\iii = (u_\iii,v_\iii)$ be the center point of $X_\iii$ for $\iii \in I^*$.
By looking at the coordinates separately we get for  $\iii \perp \jjj$
\begin{align*}
 d(x_\iii, x_\jjj) \ge & \sqrt{\left(\frac{a_\iii+a_\jjj}{2}\right)^2+\left(\frac{b_\iii+b_\jjj}{2}\right)^2} \ge \frac{a_\iii+a_\jjj+b_\iii+b_\jjj}{4}\\
\ge & \frac{1}{4}\sqrt{a_\iii^2+b_\iii^2}+\frac{1}{4}\sqrt{a_\jjj^2+b_\jjj^2} = \frac{1}{4}\diam(X_\iii)+\frac{1}{4}\diam(X_\jjj).
\end{align*}
Thus for $\iii \perp \jjj$ we have
\[
  B(x_\iii,\frac 14\diam(X_\iii)) \cap B(x_\jjj,\frac 14\diam(X_\jjj)) = \emptyset.
\]
It follows now from \ref{cond:doub} that the ball condition holds.

The following Proposition \ref{prop:equiv} and Remark \ref{rem:sufficientConditions}(ii) will show that in $\R^2$ the ball condition and the finite clustering property are equivalent. We use this fact to determine the Hausdorff dimension of the limit set $E$. The topological pressure is easily calculated to be
\[
  P(t) = \max\{\log(a_0^t + a_1^t), \log(b_0^t + b_1^t) \},
\]
see \cite[(6.1)]{KV2009}. Let $s= P^{-1}(0)$. It is clear that $0 < s \le 1$ and by Proposition \ref{prop:fctoposfinite} we have $\dimh(E) = s$ and $\HH^s(E) > 0$. We also have $\HH^s(E) < \infty$. To see this, note that $E \subset \bigcup_{\iii \in I^n} B(x_\iii, \frac 12 (a_\iii + b_\iii))$ and the diameters of these balls tend to zero as $n \to \infty$. Also note that $a_0^s+a_1^s \le 1$ and $b_0^s+a_1^s \le 1$. Therefore, for each $\varepsilon > 0$ there is an $n\in \N$ such that
\begin{align*}
  \HH_\varepsilon^s(E) &\le \sum_{\iii \in I^n} (a_\iii + b_\iii)^s \le \sum_{\iii \in I^n} a_\iii^s + \sum_{\iii \in I^n} b_\iii^s \\
    &= (a_0^s+a_1^s)^n + (b_0^s + b_1^s)^n \le 2.
\end{align*}
Consequently, $0 < \HH^s(E) \le 2$.
\end{example}

We will show that the ball condition is equivalent to the finite clustering property under some natural conditions for the space or for the WCMC. We start by tracking down how certain bounds for possible cardinalities of collections of disjoint balls with equal radii affect the situation (see Proposition \ref{prop:equiv} below).

Let $A \subset M$ and $r >0$. We call a collection of balls $\{B(x,r) : x \in H\}$ an \emph{$r$-packing} of the set $A$, if $H \subset A$ and $B(x,r)\cap B(y,r) = \emptyset$ for every $y,x \in H$, $x \ne y$. Furthermore we call the packing maximal, if 
\[
 A \subset \bigcup_{y \in H} B(y,2r).
\]
With these notions we can formulate our primary conditions as to when the finite clustering property and the ball condition imply each other. This generalizes \cite[Proposition 3.5]{KV2008}.
\begin{proposition}\label{prop:equiv}
Suppose we have a WCMC. Let $c, r_0 > 0$ and $\alpha_1 \ge \alpha_2 > 0$ be constants. Assume that for every $x \in M$ and $0 < r < R < r_0$, and for every maximal $r$-packing $\{B(x,r) : x \in H\}$ of $B(x,R)$ we have
\begin{equation}\label{eq:ballcond2}
   \#H < c \left(\frac{R}{r}\right)^{\alpha_1}
\end{equation}
Then the ball condition implies the finite clustering property.
If we, on the other hand, have
\begin{equation}\label{eq:ballcond1}
 \#H > c^{-1}\left(\frac{R}{r}\right)^{\alpha_2}
\end{equation}
then the finite clustering property implies the ball condition.
\end{proposition}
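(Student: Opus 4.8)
The plan is to prove the two implications separately; each uses only one of the two packing hypotheses, and neither direction needs both bounds.

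\emph{Ball condition $\Rightarrow$ finite clustering} (via \eqref{eq:ballcond2}). Fix $x \in E$ and a small radius $r$, and let $\{x_\iii : \iii \in Z(x,r)\}$ be the points furnished by the ball condition, so that the balls $\{B(x_\iii,\delta r)\}$ are pairwise disjoint. First I would localize the centres near $x$: if $\iii \in Z(x,r)$ then $X_\iii$ meets $B(x,r)$, has diameter at most $r$, and $x_\iii$ lies within distance $r$ of $X_\iii$, so the triangle inequality gives $x_\iii \in B(x,3r)$. Hence $\{B(x_\iii,\delta r) : \iii \in Z(x,r)\}$ is a $(\delta r)$-packing of $B(x,3r)$, and since disjoint balls cannot share a centre the map $\iii \mapsto x_\iii$ is injective, so the packing has exactly $\#Z(x,r)$ members. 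Extending it greedily to a \emph{maximal} $(\delta r)$-packing of $B(x,3r)$ and applying \eqref{eq:ballcond2} with $R=3r$ (valid once $3r<r_0$) yields $\#Z(x,r) < c(3/\delta)^{\alpha_1}$, a bound independent of $x$ and $r$; letting $r\downarrow 0$ gives the finite clustering property.

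\emph{Finite clustering $\Rightarrow$ ball condition} (via \eqref{eq:ballcond1}). Here I would set $K := \sup_{x\in E}\limsup_{r\downarrow 0}\#Z(x,r) < \infty$ and fix $\delta \in (0,\tfrac12)$ small enough, depending only on $c$, $\alpha_2$ and $K$, that \eqref{eq:ballcond1} forces every maximal $(2\delta r)$-packing of a ball of radius $r$ to have at least $K$ centres. For each $x\in E$ choose $r_x \le r_0$ with $\#Z(x,r)\le K$ for all $0<r<r_x$. Given such an $r$, enumerate $Z(x,r)=\{\iii_1,\dots,\iii_m\}$ with $m\le K$, pick $z_{\iii_k}\in X_{\iii_k}$, and select the points $x_{\iii_k}$ one at a time. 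Having chosen $x_{\iii_1},\dots,x_{\iii_{j-1}}$, I would take a maximal $(2\delta r)$-packing of $B(z_{\iii_j},r)$, whose at least $K$ centres are pairwise at distance $\ge 4\delta r$; each of the at most $j-1\le K-1$ balls $B(x_{\iii_i},2\delta r)$ contains at most one such centre, so at least one centre remains free, and I set $x_{\iii_j}$ to be that centre. Then $\dist(x_{\iii_j},X_{\iii_j})<r$ because $x_{\iii_j}\in B(z_{\iii_j},r)$, and $B(x_{\iii_j},\delta r)$ is disjoint from all earlier balls. This produces exactly the collection required by the ball condition, with the uniform constant $\delta$.

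The routine parts are the triangle-inequality localization and the extension of a packing to a maximal one. I expect the main obstacle to be the second implication, specifically securing a \emph{single} separation constant $\delta$ that works simultaneously for every $x$ and every small $r$. The crux is that, by the lower packing bound \eqref{eq:ballcond1}, a ball of radius $r$ cannot be covered by fewer than $K$ balls of radius $2\delta r$ once $\delta$ is small; this is precisely what guarantees room for the greedy choice at each of the at most $K$ steps, and it is where the hypothesis $\alpha_2>0$ and the freedom to shrink $\delta$ enter.
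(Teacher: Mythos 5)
Your proof follows essentially the same route as the paper's: the first implication localizes the ball-condition centres into $B(x,3r)$ (the paper uses $4r$) and invokes the upper packing bound \eqref{eq:ballcond2}, and the second is the paper's greedy selection with $\delta$ comparable to $\tfrac12(Kc)^{-1/\alpha_2}$, the lower packing bound \eqref{eq:ballcond1} guaranteeing room at each step. One small correction in the second part: in a general metric space, disjointness of the open balls $B(p,2\delta r)$ and $B(q,2\delta r)$ only forces $d(p,q)\ge 2\delta r$, not $d(p,q)\ge 4\delta r$, so your intermediate claim about pairwise separation of the packing centres is unjustified; however, the conclusion you actually use --- that each ball $B(x_{\iii_i},2\delta r)$ contains at most one packing centre --- still holds, because if two centres $p,q$ both lay in $B(x_{\iii_i},2\delta r)$ then $x_{\iii_i}$ would belong to both $B(p,2\delta r)$ and $B(q,2\delta r)$, contradicting their disjointness. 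With that one line repaired the argument is correct.
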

\begin{proof}
Assume that \eqref{eq:ballcond2} and the ball condition hold. Take $x \in E$ and let $0 < r < \min\{r_x, 5^{-1}r_0\}$. For every $\iii \in Z(x,r)$ choose a point $x_\iii$ so that the collection $\{B(x_\iii, \delta r) : \iii \in Z(x,r)\}$ is pairwise disjoint. Now
\[
 d(x_\iii , x) \le \dist(x_\iii, X_\iii) + \diam(X_\iii) + r \le 3r
\]
and therefore by \eqref{eq:ballcond2} we have
\[
\#Z(x,r)\le c\left(\frac{4r}{\delta r}\right)^{\alpha_1} = c\left(\frac{4}{\delta}\right)^{\alpha_1}.
\]
Thus the WCMC has the finite clustering property.

Assume now \eqref{eq:ballcond1} and the finite clustering property. Then there exists $L > 0$ such that for every $x \in E$ there is $0 <r_x < r_0$ so that $\#Z(x,r) < L$ whenever $0 < r < r_x$. Define 
\[
 \delta = \frac{1}{2}(Lc)^{-\frac{1}{\alpha_2}}.
\]
For each $\iii \in Z(x,r)$ choose a point $y_\iii \in B(x,r)\cap X_\iii$. We will find the disjoint collection of balls $B(x_\iii, \delta r)$ with centers inside the balls $B(y_\iii,r)$. Let us write $Z(x,r) = \{\iii_j : j = 1, \dots, \#Z(x,r)\}$. Now as the first center, $x_{\iii_1}$, choose any point from $B(y_{\iii_1},r)$. Rest will be chosen by induction. Assume that for $0 < k < \#Z(x,r)$ the points $x_{\iii_j}$, $j = 1, \dots, k$ have been chosen. The claim is that there exists a point 
\[
 x_{\iii_{k+1}} \in B(y_{\iii_{k+1}}, r) \setminus \bigcup_{j = 1}^k B(x_{\iii_j},2\delta r).
\]
Assume the contrary. Now writing down the inequality \eqref{eq:ballcond1} gives
\[
k > c^{-1}\left(\frac{r}{2\delta r}\right)^{\alpha_2} = L,
\]
a contradiction. Ball condition is then satisfied.
\end{proof}

\begin{remarks}\label{rem:sufficientConditions}
(i) We can achieve the equivalence between the ball condition and the finite clustering property by requiring the existence of a measure $\mu$ on $M$ so that for every $x \in M$ and $0 < r < R < r_0$ we have
\begin{equation}\label{eq:measurecond}
 c^{-1}\left(\frac{R}{r}\right)^{\alpha_2} < \frac{\mu(B(x,R))}{\mu(B(x,r))} < c \left(\frac{R}{r}\right)^{\alpha_1}. 
\end{equation}
This forces the inequalities \eqref{eq:ballcond2} and \eqref{eq:ballcond1} to hold: if we let $H$ be as in Proposition \ref{prop:equiv}, then by comparing the measures we get
\[
 \#H c^{-1}\left(\frac{r}{4R}\right)^{\alpha_1} < \sum_{y \in H}\frac{\mu(B(y,r))}{\mu(B(y,4R))} \le \sum_{y\in H}\frac{\mu(B(y,r))}{\mu(B(x,2R))} \le 1
\]
and
\begin{align*}
\frac{1}{c4^{\alpha_1}} &< \frac{\mu(B(x,R))}{\mu(B(x,4R))} \le \sum_{y\in H}\frac{\mu(B(y,2r))}{\mu(B(x,4R))}\\
& \le \sum_{y\in H}\frac{\mu(B(y,2r))}{\mu(B(y,2R))} < \#Hc\left(\frac{r}{R}\right)^{\alpha_2}. 
\end{align*}

(ii) Assume that the space $M$ is Ahlfors $s$-regular, which means that there exists a measure $\mu$ on $M$ and constants $r_0, c>0$ so that
\[
 c^{-1}r^s \le \mu(B(x,r)) \le cr^s
\]
for every $x \in M$ and $0 < r < r_0$. The measure $\mu$ now satisfies the condition \eqref{eq:measurecond}. Hence by Proposition \ref{prop:equiv} and the remark above, the finite clustering property and the ball condition are equivalent. This holds, in particular, in $\R^n$ (which is $n$-regular).

(iii) Now assume that the space $M$ contains at least two points and is \emph{uniformly perfect}, which means that there exists a constant $C > 1$ so that for each $x \in M$ and for each $r > 0$ the set $B(x,r) \setminus B(x, r/C)$ is nonempty whenever the set $M \setminus B(x,r)$ is nonempty. In this situation the inequality \eqref{eq:ballcond1} holds. To see this, let $0 < r_0 < \frac 12 \diam(M)$ and define $\delta = (2C+1)^{-1}$. Then for $0 < R < r_0$ and $x \in M$ the set $M \setminus B(x,R-\delta R)$ is nonempty and therefore $B(x,R-\delta R) \setminus B(x,\frac{R-\delta R}{C})$ is nonempty. Hence every maximal $\delta R$-packing of $B(x,R-\delta R)$ contains at least $2$ balls. Now iterating this we get that inequality \eqref{eq:ballcond1} holds with $c = 2$ and $\alpha_2 = -\frac{\log 2}{\log \delta}$.
Consequently, the finite clustering property implies the ball condition in a uniformly perfect space.

(iv) Finally, let us assume that the space $M$ is doubling. \emph{Doubling} means that there exists a constant $\kappa \in \N$ so that every ball $B(x,2r)$ can be covered with $\kappa$ balls of radius $r$. Now for $0<r<R$ let $n \in N$ be so that $2^{-n}R \le r < 2^{-n+1}R$. Let $H$ be as in Proposition \ref{prop:equiv}. For any point $y \in M$ there can be at most one point in $H \cap B(y,2^{-n-1}R)$. Therefore by iterating the doubling condition we get
\[
 \#H \le K^{n+1} = K^2 (2^{n-1})^{\log_2 K} \le K^2 \left(\frac{R}{r}\right)^{\log_2 K}.
\]
The inequality \eqref{eq:ballcond2} then holds. We conclude that for a WCMC defined on a doubling metric space, the ball conditon implies the finite clustering property.
\end{remarks}

In the remaining part of this section we will work under the assumption that $M$ is doubling. With doubling metric spaces we can make use of certain nicely behaved embeddings of these spaces into Euclidean spaces. %Recall that a metric space $M$ is doubling if and only if there exists a constant $\kappa \in \N$ so that every ball $B$ in $M$ can be covered with $\kappa$ balls of half the radius of $B$.
Working with several spaces and metrics at the same time, we will emphasize the corresponding space, metric or construction with a subscript in the notation whenever there is a possibility of confusion. The standard Euclidean distance function $(x,y) \mapsto|x-y|$ will be denoted by $d_e$. Accordingly, $\diam_e(A)$ will mean the Euclidean diameter of $A$ and $\dist_e(A,B)$ the Euclidean distance between $A$ and $B$
for $A,B \subset \R^n$ (with any $n\in\N$).

From a metric $d$ on $M$ we can derive a \emph{snowflaked metric} for a parameter $0 < p < 1$ by defining $d^p(x,y) = (d(x,y))^p$. A celebrated theorem of P. Assouad \cite[Proposition 2.6]{A1983} gives then the following.

\begin{theorem}\label{thm:assouad}
 Let $(M, d)$ be a doubling metric space. Then for each $0 < p <1$ there exists $n \in \N$ and a bi-Lipschitz embedding 
\[
f \colon (M, d^p) \to (\R^n, d_e).
\]
\end{theorem}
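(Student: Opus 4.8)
The plan is to prove this by the explicit multiscale construction originally due to Assouad; as the statement coincides with \cite[Proposition 2.6]{A1983}, I only sketch the argument. Fix $0<p<1$ and let $\kappa$ be a doubling constant for $(M,d)$. For each $j \in \mathbb{Z}$ I would choose a maximal $2^{-j}$-separated set $N_j \subset M$. Maximality gives $\dist(x, N_j) \le 2^{-j}$ for every $x\in M$, while the doubling property bounds the number of points of $N_j$ inside any ball of radius $2^{-j}$ by a constant $K=K(\kappa)$.

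The combinatorial input from doubling is a coloring. Fixing a large integer $m$ (chosen at the very end), I would color each $N_j$ with $L=L(\kappa,m)$ colors so that any two points of the same color in $N_j$ lie at distance at least $2^m 2^{-j}$. This is a greedy coloring of the graph on $N_j$ that joins two net points whenever they are closer than $2^m 2^{-j}$; the doubling condition bounds the degree of this graph in terms of $\kappa$ and $m$, so $L$ colors suffice. Write $N_j = \bigcup_{\ell=1}^L N_j^\ell$ accordingly, and note that at most one point of each $N_j^\ell$ lies within $2^{-j}$ of any given $x$.

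Now I would build the coordinates. Let $\phi(t)=\max\{0,1-t\}$, fix a basepoint $o \in M$, and for each color $\ell \in \{1,\dots,L\}$ and residue $r \in \{0,\dots,m-1\}$ set
\[
 f_{\ell,r}(x)=\sum_{j \equiv r \,(\mathrm{mod}\ m)} 2^{-jp}\sum_{a \in N_j^\ell}\Big(\phi\!\left(\tfrac{d(x,a)}{2^{-j}}\right) - \phi\!\left(\tfrac{d(o,a)}{2^{-j}}\right)\Big).
\]
The basepoint subtraction guarantees convergence: at coarse scales ($j \to -\infty$) each summand is $\lesssim 2^{-jp} 2^{j} d(x,o) = 2^{j(1-p)} d(x,o)$, which is summable since $1-p>0$, while at fine scales ($j \to +\infty$) the bump values are bounded and the weights $2^{-jp}$ are summable. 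The map $f = (f_{\ell,r})_{\ell,r}$ then takes values in $\R^n$ with $n = Lm$ depending only on $\kappa$ and $p$, and I would check it is bi-Lipschitz from $(M,d^p)$ to $(\R^n, d_e)$.

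For the upper bound the basepoint terms cancel in $f_{\ell,r}(x)-f_{\ell,r}(y)$: the scale-$j$ maps are $2^{j}$-Lipschitz, so scales with $2^{-j} \ge d(x,y)$ contribute $\lesssim \sum 2^{j(1-p)} d(x,y) \lesssim d(x,y)^p$, and finer scales contribute $\lesssim \sum_{2^{-j}<d(x,y)} 2^{-jp} \lesssim d(x,y)^p$ because the radius-$2^{-j}$ bumps about $x$ and $y$ then have disjoint supports. The lower bound is the crux and the main obstacle. Choosing the critical scale $j_0$ with $2^{-j_0}\approx d(x,y)$ and a net point $a \in N_{j_0}$ near $x$, one has $\phi(d(x,a)/2^{-j_0})$ bounded away from $0$ and $\phi(d(y,a)/2^{-j_0})=0$, so $a$ alone contributes $\approx 2^{-j_0 p}\approx d(x,y)^p$ to the coordinate $f_{\ell,r}$ with $\ell$ the color of $a$ and $r \equiv j_0$. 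The difficulty is that this contribution must not be cancelled by the other terms of the same coordinate: same-color terms at scale $j_0$ are ruled out by the $2^m 2^{-j}$ separation, while terms at scales $j \equiv r$, $j \ne j_0$, are controlled by taking $m$ large, so that coarser scales are nearly constant across $\{x,y\}$ and finer scales are killed by $2^{-jp}$. Quantifying this interference, and thereby pinning down the admissible $m$, is the technical heart; the remaining estimates are bookkeeping in terms of the doubling constant.
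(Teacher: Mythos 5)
The paper does not actually prove this theorem: it is quoted from Assouad's work, with the citation \cite[Proposition 2.6]{A1983} standing in for a proof, so there is no argument in the text to compare yours against. What you give is a correct outline of Assouad's own construction: maximal $2^{-j}$-nets at every dyadic scale, a bounded coloring with separation $2^m2^{-j}$ within each color class, tent functions weighted by $2^{-jp}$ and grouped by color and by the residue of the scale modulo $m$, with the basepoint subtraction making the coarse-scale tail summable precisely because $p<1$. The upper bound and the architecture of the lower bound --- a main contribution of order $d(x,y)^p$ from the critical scale, same-color interference at that scale excluded by the $2^m2^{-j}$ separation, and cross-scale interference within a fixed residue class beaten by taking $m$ large (the coarser scales contribute $\lesssim 2^{-m(1-p)}d(x,y)^p$ and the finer ones $\lesssim 2^{-mp}d(x,y)^p$) --- are exactly as in the standard proof. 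One small point to repair when writing it out in full: maximality of $N_{j_0}$ only guarantees $d(x,a)\le 2^{-j_0}$ for the nearest net point $a$, so $\phi(d(x,a)/2^{-j_0})$ need not be bounded away from zero as you claim; the usual fix is to widen the bumps to radius $2\cdot 2^{-j}$, or to take the witnessing net point from a scale finer by a fixed shift, after which all the constants go through.
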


In the next proposition we see that the WCMC structure is preserved under the embedding of Theorem \ref{thm:assouad}. However, it is not clear if all the separation conditions can be transferred in both directions with the embedding. In particular, the ball condition uses points from a neighborhood of the construction pieces and when the ball condition is considered in $\R^n$ these points might lie in  $\R^n \setminus f(M)$.

Proposition 3.7 lists the properties which behave well under the embedding: the Hausdorff measures, topological pressure and finite clustering property can be transferred back and forth between the spaces, whereas the ball condition and tractability can be pushed to the image side. This last property is defined as follows: a WCMC is \emph{tractable} if there is a constant $C \ge 1$ such that for each $r > 0$ we have
\[
 \dist(X_{\hhh\iii}, X_{\hhh\jjj}) \le C \diam(X_\hhh)r
\]
whenever $\hhh \in I^*$, $\iii, \jjj \in Z(r)$, and $\dist(X_\iii, X_\jjj) \le r$.

\begin{proposition}\label{prop:AssouadPushing}
Let $\MM = \{X_\iii : \iii \in I^*\}$ be a WCMC (or CMC) in a doubling metric space $M$ and $p$, $n$ and $f$ as in Theorem \ref{thm:assouad}. Then
\begin{enumerate}
 \item $\MM' := \{f(X_\iii) : \iii \in I^*\}$ is a WCMC (or CMC respectively) in $\R^n$,
 \item $P_\MM(pt) = P_{\MM'}(t)$ for every $t \ge 0$,
 \item there exists a constant $C>0$ so that 
       \[C^{-1}\HH_e^s(f(A)) \le \HH_d^{sp}(A) \le C\HH_e^s(f(A))\]
       for every Borel set $A \subset M$,
 \item The following three are equivalent:
\begin{enumerate}
\item $\MM$ has the finite clustering property,
\item $\MM'$ has the finite clustering property,
\item $\MM'$ satisfies the ball condition.
\end{enumerate}
These three conditions also hold if $\MM$ satisfies the ball condition,
 \item if  $\MM$ is tractable, then $\MM'$ is tractable.
\end{enumerate}
\end{proposition}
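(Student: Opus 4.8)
The entire argument runs on one elementary estimate. Writing $L\ge 1$ for the bi-Lipschitz constant of the map $f\colon(M,d^p)\to(\R^n,d_e)$ from Theorem \ref{thm:assouad}, so that $L^{-1}d(x,y)^p\le |f(x)-f(y)|\le L\,d(x,y)^p$, and using that $t\mapsto t^p$ is increasing, I would first record the set-level estimates
\[
L^{-1}\diam_d(A)^p\le \diam_e(f(A))\le L\,\diam_d(A)^p,\qquad L^{-1}\dist_d(A,B)^p\le \dist_e(f(A),f(B))\le L\,\dist_d(A,B)^p
\]
for all $A,B\subset M$. Every subsequent inequality is then obtained by feeding the relevant WCMC condition through these two estimates.

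With them in hand, parts (1)--(3) are mechanical. For (1) I would check \ref{cond:incl}--\ref{cond:doub} (and \ref{cond:M2} in the CMC case) for $\MM'$ in turn: \ref{cond:incl} is immediate from injectivity of $f$, \ref{cond:decr} holds for large $n$ since $\diam_e(f(X_\iii))\le L\diam_d(X_\iii)^p\to 0$ by \eqref{eq:geomdecr}, and \ref{cond:cover}, \ref{cond:doub} (resp.\ \ref{cond:M2}) follow by substituting the diameter estimate into the corresponding $\MM$-inequality, yielding a constant of the shape $D'=L^3D^p$. For (2) the two-sided diameter bound gives $L^{-t}\sum_{\iii\in I^n}\diam_d(X_\iii)^{pt}\le \sum_{\iii\in I^n}\diam_e(f(X_\iii))^t\le L^{t}\sum_{\iii\in I^n}\diam_d(X_\iii)^{pt}$; applying $\tfrac1n\log(\cdot)$ and letting $n\to\infty$ annihilates the constants and gives $P_{\MM'}(t)=P_\MM(pt)$. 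For (3) I would use that $\diam_{d^p}=\diam_d^{\,p}$ identifies $\HH^s_{d^p}$ with $\HH^{sp}_d$, and that a bi-Lipschitz map distorts $s$-dimensional Hausdorff measure by a factor in $[L^{-s},L^{s}]$; composing yields the claim with $C=L^{s}$.

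The core of the proposition is (4), and the one genuine difficulty is the equivalence of the two finite clustering properties, (a)$\Leftrightarrow$(b). The trouble is that the cutoffs defining $Z_\MM(r)$ and $Z_{\MM'}(\rho)$ do not match exactly: measured at the matched scale $\rho=Lr^p$, a single piece of $\MM$-scale $r$ may split or merge. The plan is to build a prefix map $Z_\MM(x,r)\to Z_{\MM'}(f(x),\rho)$ sending $\iii$ to its shortest prefix $\iii'$ with $\diam_e(f(X_{\iii'}))\le\rho$. The set estimates show this prefix lies in $Z_{\MM'}(f(x),\rho)$, because $f$ carries $B_d(x,r)$ into $B_e(f(x),\rho)$ and $f(X_\iii)\subset f(X_{\iii'})$. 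The crux is that the fibres are uniformly bounded: for $\iii\in Z_\MM(x,r)$ condition \ref{cond:doub} gives $\diam_d(X_\iii)>D^{-1}r$, while $\iii'\in Z_{\MM'}(\rho)$ and the set estimates give $\diam_d(X_{\iii'})\le L^{2/p}r$, so $\iii$ and $\iii'$ have comparable $\MM$-diameters; then \eqref{eq:geomdecr} bounds the tail length $|\iii|-|\iii'|$ and finiteness of $I$ bounds the number of such extensions. This produces $\#Z_\MM(x,r)\le C_0\,\#Z_{\MM'}(f(x),\rho)$, and the symmetric argument (the snowflake estimate is two-sided and $f$ is a bijection on the limit set) gives the reverse bound, so (a)$\Leftrightarrow$(b). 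Then (b)$\Leftrightarrow$(c) is immediate, since $\R^n$ is Ahlfors $n$-regular and Remark \ref{rem:sufficientConditions}(ii) makes the finite clustering property and the ball condition coincide for $\MM'$. Finally, if $\MM$ satisfies the ball condition then, $M$ being doubling, Remark \ref{rem:sufficientConditions}(iv) gives (a), whence all three hold.

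For (5) I would pass to $r=(L\rho)^{1/p}$. Given $\hhh$ and $\iii,\jjj\in Z_{\MM'}(\rho)$ with $\dist_e(f(X_\iii),f(X_\jjj))\le\rho$, the set estimates give $\diam_d(X_\iii),\diam_d(X_\jjj)\le r$ and $\dist_d(X_\iii,X_\jjj)\le r$. Taking the prefixes $\tilde\iii,\tilde\jjj\in Z_\MM(r)$ of $\iii,\jjj$, the inclusions $X_\iii\subset X_{\tilde\iii}$ and $X_\jjj\subset X_{\tilde\jjj}$ keep $\dist_d(X_{\tilde\iii},X_{\tilde\jjj})\le r$, so $\MM$-tractability bounds $\dist_d(X_{\hhh\tilde\iii},X_{\hhh\tilde\jjj})\le C\diam_d(X_\hhh)r$. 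Combining this with the inequality
\[
\dist_d(X_{\hhh\iii},X_{\hhh\jjj})\le \dist_d(X_{\hhh\tilde\iii},X_{\hhh\tilde\jjj})+\diam_d(X_{\hhh\tilde\iii})+\diam_d(X_{\hhh\tilde\jjj}),
\]
which holds since $X_{\hhh\iii}\subset X_{\hhh\tilde\iii}$ and $X_{\hhh\jjj}\subset X_{\hhh\tilde\jjj}$, and bounding the two diameters by $D\diam_d(X_\hhh)r$ via \ref{cond:cover}, I obtain $\dist_d(X_{\hhh\iii},X_{\hhh\jjj})\le(C+2D)\diam_d(X_\hhh)r$. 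Pushing this back through the set estimates (using $\diam_d(X_\hhh)^p\le L\diam_e(f(X_\hhh))$ and $r^p=L\rho$) yields $\MM'$-tractability with $C'=L^3(C+2D)^p$. I expect the cutoff-matching bookkeeping in (4) to be the only real obstacle; parts (1)--(3) and (5) are routine once the two set estimates are in place.
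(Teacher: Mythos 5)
Your proposal is correct and follows essentially the same route as the paper's proof: parts (1)--(3) and (5) are obtained by feeding the WCMC conditions through the two-sided snowflake estimates, and part (4) rests on a prefix/extension correspondence between $Z_\MM(x,r)$ and $Z_{\MM'}(f(x),Lr^p)$ whose fibres are bounded via \ref{cond:doub}, \ref{cond:cover} and \eqref{eq:geomdecr}, combined with the Euclidean equivalence of the finite clustering property and the ball condition. The only cosmetic differences are that the paper phrases one direction of the clustering equivalence contrapositively and, in (5), applies the triangle inequality on the image side rather than in $M$ before pushing through $f$.
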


\begin{proof}
We will prove the proposition for a WCMC. The proof for a CMC is similar. Let $L$ be the bi-Lipschitz constant of $f$ and constants $c$ and $\roo$ from Lemma \ref{lem:diamBoundExponential}. Assume $\MM$ is a WCMC. Take $t \ge 0$. Since
\[
 L^{-1} \diam_e(f(X_\iii))^t \le \diam_d(X_\iii)^{pt} \le L \diam_e(f(X_\iii))^t,
\]
(2) and (3) are true.

Let us check (1). For $\MM'$ the condition \ref{cond:incl} is obvious. To see \ref{cond:cover} we calculate for every $\iii,  \jjj \in I^*$
\begin{align*}
 \diam_e(f(X_{\iii\jjj})) &\le L \diam_d(X_{\iii\jjj})^p \le LD^p \diam_d(X_\iii)^p\diam_d(X_\jjj)^p \\
& \le L^3D^p \diam_e(f(X_{\iii})) \diam_e(f(X_{\jjj})).
\end{align*}
Similarly for \ref{cond:doub} we get
\begin{align*}
 \diam_e(f(X_{\iii})) &\ge L^{-1} \diam_d(X_{\iii})^p \ge L^{-1}D^{-p} \diam_d(X_{\iii^-})^p \\
& \ge L^{-2}D^{-p}\diam_e(f(X_{\iii^-})).
\end{align*}
Finally \ref{cond:decr} follows from \eqref{eq:geomdecr} with large enough $n \in \N$ by
\[
 \max_{\iii \in I^n}\diam_e(f(X_{\iii})) \le L \max_{\iii \in I^n}\diam_d(X_{\iii})^p \le cL\roo^n < D^{-1}.
\]
We denote by $D'$ the constant $D$ for $\MM'$ in the definition of a WCMC.

Next we prove (4). Since the ball condition implies the finite clustering property in a doubling metric space, we only need to prove the three equivalences. Assume that $\MM$ has the finite clustering property. Take $x \in E$ and $r >0$. First we notice that
\[
 f^{-1}(B_e(f(x),r)) \subset B_d(x, (Lr)^{1/p}).
\]
Take $\iii \in Z_\MM(x, (Lr)^{1/p})$. Now $\diam_e(f(X_\iii)) \le L^2r$. Let $l$ be the smallest integer which satisfies
\[
 l > \frac{-\log(D'L^3c)}{\log(\roo^p)}.
\]
Now for any $\hhh \in I^l$
\[
 \diam_e(f(X_{\iii\hhh})) \le D'\diam_e(f(X_\iii))\diam_e(f(X_\hhh)) \le D'\cdot L^2r \cdot Lc\roo^{pl} \le r.
\]
Therefore
\[
 \#Z_{\MM'}(f(x),r) \le N^l \# Z_\MM(x, (Lr)^{1/p})
\]
so the finite clustering property holds for $\MM'$. 

Assume that $\MM$ does not have the finite clustering property. Take $M \in \N$. There exists a point $x \in X$ such that $\limsup_{r \downarrow 0} \#Z_\MM(x,r) > M$. Let $r>0$ be small so that $\#Z_\MM(x,r) \ge M$. Fix $m \in \N$ so that $m > \frac{\log(L^{2/p}Dc)}{-\log \roo}$. Our claim is that 
\begin{equation}\label{eq:RealClustering}
\#Z_{\MM'}(f(x),Lr^p) \ge \frac{M}{N^m}.
\end{equation}

Take any $\iii \in Z_\MM(x,r)$ and for it find $k \in \N$ so that
\[
 \diam_e(f(X_{\iii|_k})) \le Lr^p < \diam_e(f(X_{\iii|_{k-1}})).
\]
Now $f(X_{\iii|_k}) \cap B_e(x,Lr^p) \ne \emptyset$ and by \ref{cond:decr} and \eqref{eq:geomdecr} we have
\[
 \frac{r^p}{L} < \diam_e(f(X_\iii))\le LD^p\diam_d(X_{\iii|_k})^p\diam_d(X_\jjj)^p \le LD^pr^pc^r\roo^{p|\jjj|},
\]
where $\iii = \iii|_k\jjj$. From the choice of $m$ we see that $|\jjj| \le m$ and thus \eqref{eq:RealClustering} holds. Therefore $\MM'$ does not have the finite clustering property.

Lastly, because $\MM'$ is a WCMC in $\R^n$, the finite clustering property and the ball condition are equivalent.

%Now (5) follows from (4) by using Proposition \ref{prop:equiv}: because $M$ is doubling and $\MM$ satisfies the ball condition, by Remark \ref{rem:sufficientConditions}(iii) $\MM$ has the finite clustering property. Thus by (4) $\MM'$ satisfies the ball condition.

We are left with proving (5). Take $r>0$, $\hhh \in I^*$ and $\iii, \jjj \in Z_{\MM'}(r)$ so that $\dist_e(f(X_\iii), f(X_\jjj)) \le r$. Now $\dist_d(X_\iii, X_\jjj) \le (Lr)^{1/p}$ and 
\[
\max\{\diam_d(X_\iii), \diam_d(X_\jjj)\} \le (Lr)^{1/p}.
\]
Let $\iii', \jjj' \in Z_\MM((Lr)^{1/p})$ so that $[\iii] \subset [\iii']$ and $[\jjj] \subset [\jjj']$. Because $\MM$ is tractable, we have
\[
 \dist_d(X_{\hhh\iii'}, X_{\hhh\jjj'}) \le C\diam_d(X_\hhh)(Lr)^{1/p}.
\]
Therefore
\begin{align*}
\dist_e(f(X_{\hhh\iii'}),f(X_{\hhh\jjj'}))&  \le L\dist_d(X_{\hhh\iii'}, X_{\hhh\jjj'})^p \le L^2C^p\diam_d(X_\hhh)^pr\\
&\le L^3C^p\diam_e(f(X_\hhh))r.
\end{align*}
On the other hand we get
\begin{align*}
 \diam_e(f(X_{\hhh\iii'})) &\le D'\diam_e(f(X_\hhh))\diam_e(f(X_{\iii'}))\\
& \le D'\diam_e(f(X_\hhh)) L\diam_d(X_{\iii'})^p  \\
& \le L^2D'\diam_e(f(X_\hhh))r
\end{align*}
and the same estimate for $\diam_e(f(X_{\hhh\jjj'}))$. By combining these estimates we get
\begin{align*}
 \dist_e(f(X_{\hhh\iii}),f(X_{\hhh\jjj})) \le & \diam_e(f(X_{\hhh\iii'})) +\diam_e(f(X_{\hhh\jjj'}))\\
& +\dist_e(f(X_{\hhh\iii'}),f(X_{\hhh\jjj'}))\\
 \le & (L^3C^p + 2L^2D')\diam_e(f(X_\hhh))r
\end{align*}
and we are done.
\end{proof}

As a first consequence of Proposition \ref{prop:AssouadPushing} we prove the following result.

\begin{proposition}\label{prop:tractCMC}
 A tractable CMC in a doubling metric space has the finite clustering property if $\HH^t(E) > 0$ with $t = P^{-1}(0)$.
\end{proposition}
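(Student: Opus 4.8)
The plan is to transport the whole construction into a Euclidean space via the Assouad embedding and then invoke the corresponding result for tractable controlled Moran constructions in $\R^n$ from \cite{KV2008}.

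First I would fix a snowflaking exponent $0 < p < 1$ and apply Theorem \ref{thm:assouad} to obtain $n \in \N$ and a bi-Lipschitz embedding $f \colon (M,d^p) \to (\R^n, d_e)$. By Proposition \ref{prop:AssouadPushing}(1) the image collection $\MM' = \{f(X_\iii) : \iii \in I^*\}$ is again a CMC in $\R^n$, and by part (5) tractability of $\MM$ is inherited by $\MM'$. Since $f$ is a homeomorphism onto its image, it commutes with the nested intersections defining the projection, so the limit set of $\MM'$ is exactly $f(E)$.

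Next I would verify that $\MM'$ meets the hypotheses of the Euclidean statement at its own critical dimension. Applying Proposition \ref{prop:AssouadPushing}(3) with $s = t/p$ and $A = E$ shows that $\HH_d^t(E) > 0$ is equivalent to $\HH_e^{t/p}(f(E)) > 0$, so the image construction has positive Hausdorff measure at the exponent $t/p$. By part (2), $P_{\MM'}(t/p) = P_\MM(t) = 0$, and since the pressure is strictly decreasing (Lemma \ref{lemma:strictdec}) its zero is unique; hence $t/p = P_{\MM'}^{-1}(0)$. Thus $\MM'$ is a tractable CMC in $\R^n$ with strictly positive Hausdorff measure at the zero of its own topological pressure.

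At this point the corresponding Euclidean result from \cite{KV2008} applies and yields that $\MM'$ has the finite clustering property (equivalently, by Remark \ref{rem:sufficientConditions}(ii), the ball condition). Finally, the three-way equivalence in Proposition \ref{prop:AssouadPushing}(4) transfers the finite clustering property of $\MM'$ back to $\MM$, which is the assertion. The only genuinely non-bookkeeping step is locating and applying the precise Euclidean statement — namely that for a tractable CMC in $\R^n$ the positivity of the critical Hausdorff measure forces the finite clustering property; once that input is in hand, everything else is routine transfer through Proposition \ref{prop:AssouadPushing}.
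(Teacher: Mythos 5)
Your proposal is correct and follows exactly the paper's own argument: snowflake and embed via Assouad, push tractability, the CMC structure, the pressure and the Hausdorff measure forward with Proposition \ref{prop:AssouadPushing}, invoke the Euclidean result \cite[Theorem 3.9]{KV2008}, and pull the finite clustering property back with part (4). Your extra remark that strict monotonicity of the pressure makes $t/p$ the zero of $P_{\MM'}$ is a small, welcome bit of added care, but the route is the same.
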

\begin{proof}
 This result is true in $\R^n$, \cite[Theorem 3.9]{KV2008}. Let $\{X_\iii : \iii \in I^*\}$ be a tractable CMC in $M$ so that $\HH^t(E) > 0$ with $P(t)=0$. Take $0 < p < 1$. Then by Proposition \ref{prop:AssouadPushing} $\{f(X_\iii) : \iii \in I^*\}$ is a tractable CMC in $\R^n$ with $P(\frac{t}{p}) = 0$ and $\HH_e^{t/p}(f(E)) > 0$. Therefore we know that $\{f(X_\iii) : \iii \in I^*\}$ satisfies the finite clustering property. The finite clustering property for the original CMC follows then from Proposition \ref{prop:AssouadPushing}.
\end{proof}

\section{Semiconformal iterated function systems}\label{section:semiconformal}

Assume that $M$ is a complete metric space and that for each $i \in I$ there is a contractive injection $\varphi_i \colon M \to M$. By contractivity of a mapping $\varphi$ we mean that there is a constant $0 < s < 1$ so that
\[
 d(\varphi(x),\varphi(y)) \le s d(x,y)
\]
for every $x,y \in M$. The collection $\{\varphi_i : i \in I\}$ is called an \emph{iterated function system (IFS)}. As is well known, there is a unique nonempty compact set $E \subset M$ (which we call the \emph{invariant set} of the IFS) such that
%A nonempty compact set $E \subset M$ is called an \emph{invariant set} (for the IFS) if
\[
 E = \bigcup_{i \in I}\varphi_i(E).
\]
We call the contractive mapping $\varphi_i$ a \emph{similitude} if there exists a fixed ratio
$0 < r_i < 1$ such that $d(\varphi_i(x),\varphi_i(y)) = r_i d(x,y)$ for every $x,y \in M$.
If all the mappings of the IFS are similitudes, the invariant set is called \emph{self-similar}.

Write $\varphi_\iii = \varphi_{i_1} \circ \cdots \circ \varphi_{i_n}$ for $\iii = (i_1, \dots, i_n)$ and $n \in \N$. We say that the IFS is \emph{semiconformal} if the invariant set $E$ has positive diameter and there are constants $D \ge 1$ and $0 < \underline{s}_\iii \le \overline{s}_\iii < 1$ (for each $\iii \in I^*$) such that $\overline{s}_\iii \le D \underline{s}_\iii$ and
\begin{equation}\label{eq:semiconformality}
 \underline{s}_\iii d(x,y) \le d(\varphi_\iii(x), \varphi_\iii(y)) \le \overline{s}_\iii d(x,y)
\end{equation}
for any $x,y \in M$ and $\iii \in I^*$. Note that then
\begin{equation}\label{eq:semiequal}
 \frac{D^{-1}}{\diam(E)} \diam(\varphi_\iii(E)) \le \underline{s}_\iii \le \overline{s}_\iii \le \frac{D}{\diam(E)} \diam(\varphi_\iii(E))
\end{equation}
for each $\iii \in I^*$.

%To define a ball condition suitable for a semiconformal IFS, we first show that each semiconformal IFS induces a %tractable CMC.

The following was proved in \cite[Lemma 5.1, Lemma 5.2]{KV2008} for semiconformal IFSs in $\R^d$. Although the proof is the same in metric spaces, we repeat it here.

\begin{proposition}\label{prop:semiIFStractable}
Let $E$ be the invariant set of a semiconformal IFS $\{\varphi_i : i \in I\}$. Then $\{\varphi_\iii(E) : \iii \in I^*\}$ is a tractable CMC. %provided that $\diam(E) > 0$.
\end{proposition}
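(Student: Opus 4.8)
The plan is to verify that the collection $\{\varphi_\iii(E) : \iii \in I^*\}$ satisfies the defining conditions of a CMC (namely \ref{cond:incl}, \ref{cond:decr}, and \ref{cond:M2}) and then the tractability condition, using throughout the two-sided comparison \eqref{eq:semiequal} between the diameter $\diam(\varphi_\iii(E))$ and the contraction data $\underline{s}_\iii, \overline{s}_\iii$. Write $X_\iii = \varphi_\iii(E)$. Since $E$ is compact with positive diameter and each $\varphi_i$ is a contractive injection, each $X_\iii$ is compact with positive diameter, so the sets are legitimate construction pieces.

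First I would check \ref{cond:incl}: because $E = \bigcup_{i\in I}\varphi_i(E) \supset \varphi_i(E)$, applying $\varphi_{\iii^-}$ gives $X_{\iii^-} = \varphi_{\iii^-}(E) \supset \varphi_{\iii^-}(\varphi_{i_{|\iii|}}(E)) = X_\iii$. For \ref{cond:decr}, the numbers $\overline{s}_\iii$ satisfy a submultiplicative bound $\overline{s}_{\iii\jjj} \le \overline{s}_\iii \overline{s}_\jjj$ coming directly from the chain rule of the contraction estimate \eqref{eq:semiconformality}, so $\max_{\iii\in I^n}\overline{s}_\iii \le (\max_{i\in I}\overline{s}_i)^n \to 0$, and via \eqref{eq:semiequal} the diameters decay to zero, giving \ref{cond:decr} for large $n$. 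The main content is \ref{cond:M2}: I would establish the near-multiplicativity $\underline{s}_\iii\,\underline{s}_\jjj \le \underline{s}_{\iii\jjj}$ and $\overline{s}_{\iii\jjj} \le \overline{s}_\iii\,\overline{s}_\jjj$ by composing the defining inequalities \eqref{eq:semiconformality} for $\varphi_{\iii\jjj} = \varphi_\iii\circ\varphi_\jjj$. Combining these with $\overline{s}_\iii \le D\underline{s}_\iii$ and feeding everything through the diameter comparison \eqref{eq:semiequal} yields constants bounding $\diam(X_{\iii\jjj}) / (\diam(X_\iii)\diam(X_\jjj))$ from above and below, which is exactly \ref{cond:M2} (for a suitably enlarged $D$ absorbing powers of the original $D$ and $\diam(E)$).

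For tractability I would argue that the bi-Lipschitz-type distortion of $\varphi_\hhh$ converts distances between far-apart pieces into distances between their images. Given $\hhh\in I^*$ and $\iii,\jjj\in Z(r)$ with $\dist(X_\iii, X_\jjj)\le r$, the estimate $\dist(\varphi_\hhh(A),\varphi_\hhh(B)) \le \overline{s}_\hhh\,\dist(A,B)$ (which follows from the upper bound in \eqref{eq:semiconformality}) gives $\dist(X_{\hhh\iii}, X_{\hhh\jjj}) = \dist(\varphi_\hhh(X_\iii), \varphi_\hhh(X_\jjj)) \le \overline{s}_\hhh\, r$, and then \eqref{eq:semiequal} bounds $\overline{s}_\hhh$ by a constant multiple of $\diam(X_\hhh)$, producing the required tractability inequality $\dist(X_{\hhh\iii}, X_{\hhh\jjj}) \le C\diam(X_\hhh)\,r$.

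I expect the only genuine subtlety — the "hard part" — to be bookkeeping the constant $D$ correctly through \ref{cond:M2}: the lower and upper comparisons in \eqref{eq:semiequal} each cost a factor of $D/\diam(E)$ or $D^{-1}/\diam(E)$, and composing two words introduces the product of these, so one must verify that a single finite constant (depending only on $D$ and $\diam(E)$) simultaneously bounds the ratio from both sides. Everything else reduces to routine composition of the contraction inequalities, which I would not spell out in full.
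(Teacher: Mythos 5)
Your proposal is correct and follows essentially the paper's own route: \ref{cond:incl} from the invariance $E=\bigcup_{i\in I}\varphi_i(E)$, \ref{cond:decr} from contractivity, and both \ref{cond:M2} and tractability from the distortion bounds \eqref{eq:semiconformality} combined with the diameter comparison \eqref{eq:semiequal}. One small caution on your treatment of \ref{cond:M2}: the literal inequalities $\underline{s}_\iii\,\underline{s}_\jjj \le \underline{s}_{\iii\jjj}$ and $\overline{s}_{\iii\jjj} \le \overline{s}_\iii\,\overline{s}_\jjj$ do not follow from composing \eqref{eq:semiconformality} alone, since the constants attached to the composed word are given data rather than defined as products — they only hold after crossing between the upper and lower bounds via $\overline{s}_\kkk \le D\underline{s}_\kkk$, at the cost of a factor of $D$ (which your ``enlarged constant'' remark absorbs); the paper sidesteps this detour entirely by applying the distortion of $\varphi_\iii$ directly to the set $\varphi_\jjj(E)$, obtaining $\underline{s}_\iii\diam(\varphi_\jjj(E)) \le \diam(\varphi_{\iii\jjj}(E)) \le \overline{s}_\iii\diam(\varphi_\jjj(E))$ in one step and then invoking \eqref{eq:semiequal} once.
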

\begin{proof}
 Let us first prove that $\{\varphi_\iii(E) : \iii \in I^*\}$ is a CMC. By semiconformality, we have $\diam(E) > 0$. Since $E = \bigcup_{i\in I} \varphi_i(E)$, \ref{cond:incl} is satisfied. From \eqref{eq:semiconformality} and \eqref{eq:semiequal} we get
\begin{align*}
 \diam(\varphi_{\iii\jjj}(E)) &\le \overline{s}_\iii \diam(\varphi_{\jjj}(E)) %\le \frac{D}{\diam(E)}\underline{s}_\iii \diam(E)\diam(\varphi_{\jjj}(E))
\\ &\le \frac{D}{\diam(E)}\diam(\varphi_{\iii}(E))\diam(\varphi_{\jjj}(E))
\end{align*}
and
\begin{align*}
\diam(\varphi_{\iii\jjj}(E)) &\ge \underline{s}_\iii \diam(\varphi_{\jjj}(E)) % \ge \frac{1}{D\diam(E)}\overline{s}_\iii \diam(E)\diam(\varphi_{\jjj}(E))
\\ & \ge \frac{D^{-1}}{\diam(E)}\diam(\varphi_{\iii}(E))\diam(\varphi_{\jjj}(E)).
\end{align*}
Thus \ref{cond:M2} holds. Contractivity of $\varphi_i$ for every $i \in I$ ensures \ref{cond:decr}.

To see tractability, assume $r>0$, take any $\hhh \in I^*$ and choose $\iii, \jjj \in Z(r)$ so that $\dist(\varphi_\iii(E), \varphi_\jjj(E)) \le r$. Then
\begin{align*}
 \dist(\varphi_{\hhh\iii}(E), \varphi_{\hhh\jjj}(E)) &\le \overline{s}_\hhh \dist(\varphi_{\iii}(E), \varphi_{\jjj}(E)) \\
     &\le D\underline{s}_\hhh \dist(\varphi_{\iii}(E), \varphi_{\jjj}(E)) \\
     &\le \frac{D}{\diam(E)} \diam(\varphi_\hhh(E))\, r,
\end{align*}
and we are done.
\end{proof}

In the sequel we will denote $\varphi_\iii(E)$ by $E_\iii$ whenever the need to simplify the notation arises. % if $E$ is the invariant set of an IFS $\{ \varphi_i \}_{i \in I}$ and $\iii \in I^*$.
Given an IFS  $\{\varphi_i\}_{i\in I}$, the set system $\{ E_\iii \}_{\iii \in I^*}$ is not necessarily a WCMC but when it is, we call the topological pressure of the WCMC also the pressure of the corresponding IFS. By \eqref{eq:semiequal} it is clear that the pressure of a semiconformal IFS can be calculated by the formula
\[
  P(t) = \lim_{n \to \infty} \frac{1}{n}\log\sum_{\iii \in I^n} s_\iii^{t}
\]
where each $s_\iii$, $\iii \in I^*$, is allowed to be any of the numbers $\diam(E_\iii)$, $\underline{s}_\iii$ or $\overline{s}_\iii$.
 In the special case that every $\varphi_i$ is a similitude and $r_i$, $i\in I$, are the corresponding contraction ratios, the most natural choice for $s_\iii$ indexed by $\iii = (i_1,\dots,i_n) \in I^{|\iii|}$ is $s_\iii = r_{i_1}\cdots r_{i_n}$. Then the equation $P(t) = 0$ simplifies to the so-called \emph{Moran equation}
\[
   \sum_{i \in I} r_i^t = 0.
\]
The solution of this equation is usually called the \emph{similarity dimension} of the corresponding similitude IFS.

We say that an IFS satisfies the ball condition if the iterated images of the invariant set constitute a WCMC that satisfies the ball condition. We define the finite clustering property for an IFS similarly. The next proposition and its corollary show that if the IFS in question is semiconformal and defined on a doubling space, then the ball condition is in fact equivalent to the finite clustering property.

\begin{proposition}\label{prop:wegotballs}
 Let $M$ be a complete doubling metric space and $\{\varphi_i\}_{i \in I}$ a semiconformal IFS on $M$ such that %satisfies the ball condition
$\{ \varphi_\iii(E) \}_{\iii \in I^*}$ has the finite clustering property. Then there is a constant $\delta > 0$ and a point $x \in E$ so that
\[
 B(\varphi_\iii(x), \delta \diam(E_\iii)) \cap B(\varphi_\jjj(x), \delta \diam(E_\jjj)) = \emptyset
\]
whenever $\iii \perp \jjj$.
\end{proposition}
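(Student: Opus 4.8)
The plan is to reduce the global separation to a depth condition on the canonical images $\varphi_\iii(x)$, and then to produce the base point $x$ from the finite clustering property.

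First I would reduce to words with distinct first letters. Given $\iii\perp\jjj$, let $\hhh$ be their longest common prefix and write $\iii=\hhh\iii'$, $\jjj=\hhh\jjj'$, so that $\iii'$ and $\jjj'$ begin with different letters. Since $\{\varphi_\kkk(E)\}$ is a tractable CMC (Proposition \ref{prop:semiIFStractable}), semiconformality \eqref{eq:semiconformality} together with $\overline s_\hhh\le D\underline s_\hhh$ gives
\[
d(\varphi_\iii(x),\varphi_\jjj(x))\ge\underline s_\hhh\,d(\varphi_{\iii'}(x),\varphi_{\jjj'}(x)),\qquad \diam(E_\iii)+\diam(E_\jjj)\le\overline s_\hhh\bigl(\diam(E_{\iii'})+\diam(E_{\jjj'})\bigr),
\]
so a bound for $\iii',\jjj'$ transfers to $\iii,\jjj$ at the cost of a factor $\underline s_\hhh/\overline s_\hhh\ge D^{-1}$. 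Hence it suffices to find $x\in E$ and $\eta>0$ such that whenever $\kkk\in I^*$ and $j\in I$ differs from the first letter of $\kkk$,
\[
\dist(\varphi_\kkk(x),E_j)\ge\eta\,\diam(E_\kkk).\tag{$\ast$}
\]
Indeed, for $\iii',\jjj'$ with distinct first letters and, say, $\diam(E_{\iii'})\ge\diam(E_{\jjj'})$, condition $(\ast)$ applied to $\kkk=\iii'$ and $j=$ the first letter of $\jjj'$ gives, using $\varphi_{\jjj'}(x)\in E_{\jjj'}\subset E_j$, the estimate $d(\varphi_{\iii'}(x),\varphi_{\jjj'}(x))\ge\eta\diam(E_{\iii'})\ge\tfrac\eta2(\diam(E_{\iii'})+\diam(E_{\jjj'}))$; the theorem then holds with $\delta=\eta/(2D)$.

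It remains to construct a base point satisfying $(\ast)$. Pulling $(\ast)$ back through $\varphi_{k_1}$, where $k_1$ is the first letter of $\kkk$, and using semiconformality, $(\ast)$ amounts to requiring that the whole canonical orbit $\{\varphi_\kkk(x):\kkk\in I^*\}$ stay, at the relative scale of each piece, uniformly away from the first--level overlap set $O:=\bigcup_{i\ne j}(E_i\cap E_j)$. I would produce such an $x$ using the measure obtained from the finite clustering property: by Lemma \ref{lemma:1} and Propositions \ref{prop:Frostman}, \ref{prop:fctoposfinite} there is a Borel measure $\mu$ on $E$ with $\mu(E_\kkk)\asymp\diam(E_\kkk)^t$ (the lower bound coming from finite clustering), for which the overlap set $O$ is $\mu$--null. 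Estimating, for each $\kkk$ and each $\eta$, the $\mu$--measure of the set of base points whose $\kkk$--image lies within $\eta\diam(E_\kkk)$ of $O$, and summing these estimates over the cut sets $Z(r)$ as $r\downarrow0$, a Borel--Cantelli argument yields $\mu$--almost every $x$ satisfying $(\ast)$ for some $\eta=\eta_x>0$; any such $x$ will do.

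The main obstacle is exactly this last step: showing that a single base point can be kept uniformly interior at every scale simultaneously. The naive demand that each orbit point $\varphi_\kkk(x)$ avoid entire sibling pieces is impossible, because $\{\varphi_\kkk(x)\}$ is dense in $E$ and therefore must meet $O$; what saves the argument is that $(\ast)$ only compares $\varphi_\kkk(x)$ with pieces at its own scale $\diam(E_\kkk)\to0$, so only a summable amount of ``bad'' measure is removed. Making this quantitative requires the uniform, scale--independent control provided by finite clustering (equivalently, by Proposition \ref{prop:equiv} and Remark \ref{rem:sufficientConditions}, by the ball condition, since $E$ is uniformly perfect): it is needed both for the lower bound $\mu(E_\kkk)\gtrsim\diam(E_\kkk)^t$ and for the nullity and neighbourhood estimates of $O$ that drive the Borel--Cantelli step.
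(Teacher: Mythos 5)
Your reductions are sound: passing to the longest common prefix and using \eqref{eq:semiconformality} with $\overline{s}_\hhh \le D\underline{s}_\hhh$ correctly reduces the claim to words with distinct first letters, and condition $(\ast)$ (the orbit point $\varphi_\kkk(x)$ stays $\eta\,\diam(E_\kkk)$ away from every first--level piece $E_j$ with $j$ different from the first letter of $\kkk$) is indeed equivalent to the desired conclusion, with $\delta=\eta/(2D)$. The genuine gap is in the final step, and it is not merely a matter of making the Borel--Cantelli argument quantitative: the mechanism you propose does not have the inputs it needs. First, the measure $\mu$ produced by Lemma \ref{lemma:1} and Proposition \ref{prop:Frostman} satisfies only the one--sided bound $\mu([\kkk])\le\diam(E_\kkk)^t$ and carries no quasi-invariance under the maps $\varphi_\kkk$; consequently $\mu(\{x:\varphi_\kkk(x)\in A\})$ cannot be estimated by $\mu(A)$ or by the relative measure of $A$ inside $E_\kkk$, so the individual terms of your Borel--Cantelli sum are not controlled. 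Producing a measure with the required conformal transformation rule is exactly the thermodynamic-formalism route that the paper explicitly avoids (see the discussion before Theorem \ref{thm:equiv}). Second, even with such a measure, summability requires a \emph{rate} of decay for the measure of the $\eta\,\diam(E_\kkk)$--neighbourhoods of the sibling pieces, uniform over $\kkk$; nullity of the overlap set and the finite clustering property give convergence to zero but no rate, and no such rate is established anywhere. Third, replacing ``within $\eta\,\diam(E_\kkk)$ of $E_j$'' by ``within $\eta\,\diam(E_\kkk)$ of $O=\bigcup_{i\ne j}(E_i\cap E_j)$'' is incorrect: a point of $E_{k_1}$ can be arbitrarily close to $E_j$ while remaining far from $E_{k_1}\cap E_j$. (There is also the smaller issue that Borel--Cantelli with fixed $\eta$ only excludes all but finitely many bad $\kkk$, and the finitely many exceptions still have to be absorbed by shrinking $\eta$.)

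For comparison, the paper's proof sidesteps the construction of a dynamically adapted measure entirely: it applies the Assouad embedding $f\colon(M,d^p)\to(\R^n,d_e)$ of Theorem \ref{thm:assouad}, uses Proposition \ref{prop:AssouadPushing} to transfer the finite clustering property to the image and to verify that $\{f(E_\iii)\}_{\iii\in I^*}$ is a tractable, semiconformal CMC in $\R^n$, invokes the already-known Euclidean result \cite[Corollary 4.8]{KV2008} to obtain the separated point there, and pulls the conclusion back through $f^{-1}$ using the bi-Lipschitz bounds. If you want a self-contained argument in the metric space, the missing ingredient is the combinatorial/pigeonhole construction of the deep point from \cite{KV2008} (in the spirit of Schief and Bandt--Graf), not a measure-theoretic Borel--Cantelli argument.
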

\begin{proof}
Let $0 < p < 1$. With the Assouad embedding $f\colon (M,d^p) \to (\R^n,d_e)$ we get a tractable CMC $\{f(E_\iii)\}_{\iii \in I^*}$ on $\R^n$. By Proposition \ref{prop:AssouadPushing} it satisfies the ball condition. Furthermore, letting $L$ denote the bi-Lipschitz constant of $f$, it is straightforward to check, simply by using the definitions, that by choosing $C^* = L^4 D^{2p}$ (where the constant $D \ge 1$ is from the definition of semiconformality) we get
\[
  \frac{\dist_e(f(E_{\hhh\iii}), f(E_{\hhh\jjj}))}{\diam_e(f(E_\hhh))} \le C^* \frac{\dist_e(f(E_{\kkk\iii}), f(E_{\kkk\jjj}))}{\diam_e(f(E_\kkk))}
\]
for all $\iii,\jjj,\hhh,\kkk \in I^*$. Thus $\{f(E_\iii)\}_{\iii \in I^*}$ is, using the terminology of \cite{KV2008}, a semiconformal CMC. This property allows us to utilize \cite[Corollary 4.8]{KV2008} to get a constant $\delta' > 0$ and a point $x \in E$ so that
\[
 B(f(\varphi_\iii(x)), \delta' \diam_e(f(E_\iii))) \cap B(f(\varphi_\jjj(x)), \delta' \diam_e(f(E_\jjj))) = \emptyset
\]
whenever $\iii \perp \jjj$. Now by combining the facts that
\[
  f^{-1}(B(f(z),r)) \supset B(z,(L^{-1} r)^{1/p})
\]
for $z \in M$, $r>0$ and $\diam_e(f(E_\iii)) \ge L^{-1} \diam(E_\iii)^{p}$ for $\iii \in I^*$, we find that with $\delta = (\delta'L^{-2})^{1/p}$ we have
\[
 B(\varphi_\iii(x), \delta \diam(E_\iii)) \cap B(\varphi_\jjj(x), \delta \diam(E_\jjj)) = \emptyset
\]
whenever $\iii \perp \jjj$.
\end{proof}

\begin{corollary}\label{cor:BallCondEquivPressureCond}
For a CMC $\{ \varphi_\iii(E) \}_{\iii \in I^*}$ corresponding to a semiconformal IFS defined on a complete doubling metric space, the following conditions are equivalent:
\begin{enumerate}
 \item The ball condition.
 \item The finite clustering property.
 \item $\HH^t(E) > 0$ with $P(t) = 0$.
 \item There exist $x\in M$ and $\eps > 0$ such that
\begin{equation}\label{eq:properseparation}
  d(\varphi_\iii(x),\varphi_\jjj(x)) \ge \eps(\underline{s}_\iii + \underline{s}_\jjj) \,\text{ whenever $\iii \perp \jjj$.}
\end{equation}
\end{enumerate}
\end{corollary}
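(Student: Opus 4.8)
The plan is to prove the four conditions equivalent by establishing a cycle of implications, leaning heavily on results already in hand. First I note that the equivalence $(1)\Leftrightarrow(2)$ is essentially free: since the IFS is semiconformal on a complete doubling space, Proposition~\ref{prop:semiIFStractable} tells us $\{\varphi_\iii(E)\}_{\iii \in I^*}$ is a tractable CMC, and then Proposition~\ref{prop:AssouadPushing}(4) (via the Assouad embedding) gives that the ball condition and finite clustering property are equivalent in this setting. For $(2)\Leftrightarrow(3)$ I would invoke Proposition~\ref{prop:fctoposfinite}: the finite clustering property together with $P(t)\ge 0$ yields $\HH^t(E)>0$, and the converse direction is exactly Proposition~\ref{prop:tractCMC}, which states that a tractable CMC in a doubling space with $\HH^t(E)>0$ at $t=P^{-1}(0)$ has the finite clustering property. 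So the genuinely new content of the corollary is bringing condition~(4) into the cycle.

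To handle $(4)$, I would prove $(2)\Rightarrow(4)$ and $(4)\Rightarrow(2)$. The forward implication is the real payoff of Proposition~\ref{prop:wegotballs}: assuming the finite clustering property, that proposition supplies a constant $\delta>0$ and a point $x\in E$ with
\[
 B(\varphi_\iii(x), \delta\diam(E_\iii)) \cap B(\varphi_\jjj(x), \delta\diam(E_\jjj)) = \emptyset
\]
whenever $\iii \perp \jjj$. Disjointness of these balls forces $d(\varphi_\iii(x),\varphi_\jjj(x)) \ge \delta(\diam(E_\iii)+\diam(E_\jjj))$, and then the comparison \eqref{eq:semiequal}, namely $\diam(E_\iii)\ge D^{-1}\diam(E)\,\underline{s}_\iii$, converts this into the separation \eqref{eq:properseparation} with $\eps = \delta D^{-1}\diam(E)$. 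This direction is mostly bookkeeping once Proposition~\ref{prop:wegotballs} is granted.

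The remaining implication $(4)\Rightarrow(2)$ (or, more convenient, $(4)\Rightarrow(1)$) is where I expect the main obstacle, and it is the step most worth spelling out. Given a point $x$ and $\eps>0$ satisfying \eqref{eq:properseparation}, I want to produce the disjoint balls required by the ball condition. The natural move is to take, for each $\iii \in Z(r)$, the translate-center $x_\iii := \varphi_\iii(x)$, which lies within $\diam(E_\iii)\le r$ of $X_\iii$ and so satisfies $\dist(x_\iii, X_\iii)< r$ as the ball condition demands. For two distinct words $\iii,\jjj \in Z(x,r)$ we have $\iii \perp \jjj$, so \eqref{eq:properseparation} gives $d(x_\iii,x_\jjj) \ge \eps(\underline{s}_\iii + \underline{s}_\jjj)$, and the delicate point is to turn this lower bound in terms of the contraction ratios into a uniform lower bound proportional to $r$. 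Using \eqref{eq:semiequal} together with condition \ref{cond:doub} for words in $Z(r)$ (which keeps $\underline{s}_\iii$ comparable to $r/\diam(E)$ up to the constant $D$), I can bound $d(x_\iii,x_\jjj)$ below by a fixed multiple of $r$, and choosing $\delta$ to be a small enough multiple of $\eps/\diam(E)$ makes the balls $\{B(x_\iii,\delta r):\iii\in Z(x,r)\}$ pairwise disjoint. The care needed is precisely in tracking these comparability constants so that a single $\delta$ works uniformly over all $x\in E$ and all small $r$; once that is done the ball condition holds and the cycle of equivalences is complete.
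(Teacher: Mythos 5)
Your overall architecture matches the paper's: $(1)\Leftrightarrow(2)$ and $(2)\Leftrightarrow(3)$ via the tractability of the CMC together with Propositions \ref{prop:fctoposfinite}, \ref{prop:tractCMC} and \ref{prop:wegotballs}, and then $(4)$ is attached by a direct back-and-forth. Your $(2)\Rightarrow(4)$ step is correct and is exactly the paper's. But there is a genuine gap in $(4)\Rightarrow(1)$. Condition $(4)$ only asserts the existence of \emph{some} $x\in M$, and your chosen centers $x_\iii=\varphi_\iii(x)$ need not satisfy the requirement $\dist(x_\iii,X_\iii)<r$ of the ball condition. Your claim that $\varphi_\iii(x)$ ``lies within $\diam(E_\iii)\le r$ of $X_\iii$'' is only valid when $x\in E$; in general one has $\dist(\varphi_\iii(x),E_\iii)\le \overline{s}_\iii\, d(x,E)\le \tfrac{D}{\diam(E)}\,r\,d(x,E)$, which exceeds $r$ as soon as $d(x,E)>\diam(E)/D$. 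Nor can you simply move $x$ to a nearby point of $E$: perturbing $x$ by $d(x,x')$ degrades the separation \eqref{eq:properseparation} by $D\,d(x,x')(\underline{s}_\iii+\underline{s}_\jjj)$, so the property survives only if $d(x,E)<\eps/D$, which is not guaranteed. The paper's fix is to choose a fixed word $\hhh\in I^*$ long enough that $\tfrac{D}{\diam(E)}\,d(x,E)\,\overline{s}_\hhh\le 1$ and to take $x_\iii=\varphi_{\iii\hhh}(x)$ instead. Since $\iii\hhh\perp\jjj\hhh$ whenever $\iii\perp\jjj$, the separation hypothesis still applies to the longer words, the distance $d(x_\iii,E_\iii)$ is now at most $r$, and the extra factor $\diam(E_\hhh)$ appearing in the lower bound for $d(x_\iii,x_\jjj)$ is a fixed constant absorbed into $\delta$. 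This composition trick is the one idea your argument is missing.

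A secondary inaccuracy: Proposition \ref{prop:AssouadPushing}(4) does \emph{not} give the equivalence of the ball condition and the finite clustering property for $\MM$ itself; it only shows that the ball condition for $\MM$ implies the finite clustering property, and that the finite clustering property transfers to the \emph{embedded} construction $\MM'$ in $\R^n$, where it is equivalent to the ball condition for $\MM'$. The ball condition cannot in general be pulled back through the Assouad embedding, because the required ball centers may land outside $f(M)$. The implication $(2)\Rightarrow(1)$ therefore genuinely needs Proposition \ref{prop:wegotballs} (whose conclusion, with $x\in E$, immediately yields the ball condition), which you do invoke later, so this is repairable by rerouting rather than a fatal flaw.
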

\begin{proof}
  Assume that $\{ \varphi_i \colon M \to M \}_{i \in I}$ is a semiconformal IFS and $M$ is doubling. If the ball condition is satisfied, then by Remark \ref{rem:sufficientConditions}(iv) the corresponding CMC has the finite clustering property.
Proposition \ref{prop:wegotballs} gives the other direction. By Proposition \ref{prop:semiIFStractable}, the corresponding CMC is tractable. Hence Propositions \ref{prop:fctoposfinite} and \ref{prop:tractCMC} together give the equivalence
between the finite clustering property and the positivity of $\HH^t(E)$ for $t=P^{-1}(0)$. Consequently, the first three conditions are equivalent. Furthermore, it follows immediately from \eqref{eq:semiequal} and Proposition \ref{prop:wegotballs} that the ball condition implies the fourth condition. %Note that we can take $x\in E$ in (4) if the ball condition holds.

As the final step, we will show that the last condition implies the ball condition. Assume that $x\in M$ and $\eps>0$ satisfy \eqref{eq:properseparation}. Let $0<r<\diam(E)$. By \ref{cond:doub} and the definition of $Z(r)$, there is a constant $d_0 > 0$, not depending on $r$, such that $d_0 r \le \diam(E_\iii) \le r$ for all $\iii \in Z(r)$. Now choose $\hhh\in I^*$ long enough so that
\[
  \frac{D}{\diam(E)}\, d(x,E)\, \overline{s}_\hhh\le 1
\]
and take $x_\iii = \varphi_{\iii\hhh}(x)$ for each $\iii \in I^*$. Then, using \eqref{eq:semiequal}, for each $\iii \in Z(r)$ we have
\begin{align*}
  d(x_\iii,E_\iii) &\le \overline{s}_\iii d(\varphi_\hhh(x),E) \le \frac{D}{\diam(E)}\diam(E_\iii)\, d(\varphi_\hhh(x),E_\hhh) \\
&\le \frac{D}{\diam(E)}\,r\, \overline{s}_\hhh d(x,E) \le r.
\end{align*}
Moreover, using \eqref{eq:semiequal} this time twice, we get
\begin{align*}
  d(x_\iii,x_\jjj) &\ge \eps \,(D\diam(E))^{-2}\diam(E_\hhh) (\diam(E_\iii) + \diam(E_\jjj))\\
  &\ge 2\eps \,(D\diam(E))^{-2}\diam(E_\hhh)\,d_0 r
\end{align*}
for distict $\iii,\jjj \in Z(r)$. This implies that by choosing
\[
\delta = \eps \,(D\diam(E))^{-2}\diam(E_\hhh)\,d_0
\]
we get $B(x_\iii,\delta r) \cap B(x_\jjj,\delta r) = \emptyset$ for any two distinct $\iii,\jjj \in Z(z,r)$ with any $z\in M$. Thus the ball condition holds, and the proof is complete.
\end{proof}

Our next effort is to relate the ball condition to a more familiar separation condition defined here as follows. An IFS satisfies the \emph{open set condition (OSC)} if there exists a nonempty open set $U \subset M$ such that
\begin{equation}\label{eq:OSC}
 \varphi_\iii(U) \cap \varphi_\jjj(U) = \emptyset \qquad \text{whenever } \iii \perp \jjj.
\end{equation}
We call such an open set $U$ \emph{feasible} for the OSC. If there exists a feasible $U$ for which $U \cap E \ne \emptyset$, the IFS satisfies the \emph{strong open set condition (SOSC)}.

\begin{remark}\label{rem:standardOSC}
The standard version of the OSC, from \cite{H1981}, assumes the existence of a nonempty open set $\mathcal{O} \subset M$ such that $\varphi_i(\mathcal{O}) \subset \mathcal{O}$ for each $i \in I$ and $\varphi_i(\mathcal{O}) \cap \varphi_j(\mathcal{O}) = \emptyset$ whenever $i,j \in I$ and $i \ne j$. By assuming further that $\mathcal{O}$ intersects the invariant set $E$, we get the standard SOSC. As regards to when the standard versions of the OSC and SOSC are equivalent to our versions, this certainly holds if the nonempty open set $U$ satisfying \eqref{eq:OSC} can be chosen so that the set $\mathcal{O} := U \cup \bigcup_{\iii \in I^*} \varphi_\iii(U)$ is open as well, because then $\mathcal{O}$ is a feasible open set for the standard OSC and it intersects $E$ if $U$ intersects $E$. We refer to the proof of \cite[lemma 5.3]{KV2008} for details.
%
%(ii) Suppose $U$ is a feasible open set for the OSC with $\varphi_\iii(U)$ open for every $\iii \in I^*$. Then each $\varphi_\iii(U)$ is also feasible for the OSC. Therefore, noting that $\dist(\varphi_\iii(U),E) \to 0$ as $|\iii| \to \infty$, we may freely assume that the distance between a feasible open set $U$ and the invariant set $E$ is below any prescribed positive number.
\end{remark}

%It is easy to see, in light of Proposition \ref{prop:wegotballs} and Corollary \ref{cor:BallCondEquivPressureCond}, that
The ball condition implies the OSC for a semiconformal IFS.
We defer the easy verification of this fact until later (see the proof of Theorem \ref{thm:equiv}).
Instead, we show now by a simple example that the reverse implication is not generally true, not even for a similitude IFS defined on a complete doubling metric space. The example also shows that the OSC and the SOSC are not equivalent in the setting of doubling metric spaces.

\begin{example}\label{ex:OSCnoSOSC}
%  Consider the IFS  $\{ \varphi_0, \varphi_1, \varphi_2 \colon \R \to \R \}$, $I := \{0,1,2 \}$ given by mappings
%\[
%  \varphi_0(x) = \frac x3, \quad \varphi_1(x) = \frac {x+1}3, \quad \varphi_2(x) = \frac {x+\lambda}3
%\]
%where $\lambda \in \R$ is a fixed parameter.
Let $\frac 12 < r < 1$ and consider the pair $\varphi_0,\varphi_1$ of similitudes defined at each $x \in \R^2$ by
\begin{equation}\label{eq:overlappingmappings}
  \varphi_0(x) = r x, \quad \varphi_1(x) = r x + (1,0).
\end{equation}
Letting $\mathcal{I} = \{(x,0) \in \R^2 : 0 \le x \le \frac{1}{1-r} \}$, it is easy to check that
\[
  \mathcal{I} = \varphi_0(\mathcal{I}) \cup \varphi_1(\mathcal{I}).
\]
This means that the horizontal line segment $\mathcal{I}$ is the invariant set of the similitude IFS $\{\varphi_0,\varphi_1 \}$. The similarity dimension of this IFS, denoted here by $s$, satisfies the Moran equation
$
 r^s + r^s = 1
$
so we have
\begin{equation}\label{eq:DimTooMuch}
 s = P^{-1}(0) = \frac{\log 2}{\log(1/r)} > 1 = \dimh(\mathcal{I}).
\end{equation}
Now let $\mathcal{J} = \{(0,y) \in \R^2 : 0 \le y \le 1 \}$ and set
\[
  M = \,\mathcal{I} \cup \mathcal{J} \cup \bigcup_{\iii \in \{0,1\}^*} \varphi_\iii(\mathcal{J})\,.
\]
%Each $(x,y) \in M$ satisfies $0 \le x \le \frac 1{1-r}$ and $0 \le y \le 1$.
Note that for each $\iii \in \{0,1\}^*$, the set $\varphi_\iii(\mathcal{J})$ is a vertical line segment with lower endpoint $\varphi_\iii(0,0) \in \mathcal{I}$ and of height $r^{|\iii|}$. % which tends to zero as $|\iii| \to \infty$. Therefore if $(u,v)$ is in the complement of $M$ with $0 < u \le \frac 1{1-r}$ and $0<v\le 1$ then e.g. the ball $B((u,v),\frac 12 \min\{u,v\})$ with positive distance to $\mathcal{I} \cup \mathcal{J}$ intersects at most finitely many of the sets $\varphi_\iii(\mathcal{J})$, to each of which $(u,v)$ has positive distance. This shows that $M$ is closed.
It is simple to check that the complement of $M$ is open, so $M$ itself is closed. Hence, equipped with the inherited Euclidean metric, $M$ is a complete doubling metric space.

Since we have $\varphi_i(M) \subset M$ for $i\in \{0,1\}$, %by reducing the domain of definition of $\varphi_0$ and $\varphi_1$ to $M$
we may regard $\{\varphi_0,\varphi_1 \}$ as a similitude IFS on $M$. Due to the strict inequality in \eqref{eq:DimTooMuch}, the SOSC cannot hold for this IFS because under the SOSC, the similarity dimension of the IFS equals the Hausdorff dimension of the invariant set (see \cite[Theorem 2.6]{S1996} or Proposition \ref{prop:SOSCdim} later in this section). For the same reason, recalling Corollary \ref{cor:BallCondEquivPressureCond}, neither is the ball condition satisfied. However, %as we will now show,
the OSC is satisfied for all but countably many values of $r$ in \eqref{eq:overlappingmappings}: letting
\[
  U = \mathcal{J} \setminus \{(0,0)\}
\]
which is open in $M$, we claim that $\varphi_\iii(U) \cap \varphi_\jjj(U) = \emptyset$  whenever $\iii \perp \jjj$ provided that $r$ is a transcendental number. To see this, first note that $\varphi_\iii(U)$ and $\varphi_\jjj(U)$ intersect if and only if $\varphi_\iii(0,0) = \varphi_\jjj(0,0)$. Moreover, given $m \in \N$ and $\iii = (i_1,i_2, \dots i_m) \in \{0,1\}^m$, it is easy to verify by induction that $\varphi_\iii(0,0) = (x_\iii,  0)$ where
\[
 x_{\iii} = \sum_{k=1}^{m} i_k r^{k-1}.
\]
In particular, $\varphi_\iii(0,0) = \varphi_{\iii0}(0,0)$ for each $\iii \in \{0,1\}^*$. Thus if there exist symbols $\iii = (i_1,i_2,\dots, i_m)$ and $\jjj = (j_1,j_2,\dots, j_n)$ in $\{0,1\}^*$ such that $\iii \perp \jjj$ and $\varphi_\iii(U) \cap \varphi_\jjj(U) \ne \emptyset$, there is no loss of generality to assume $m=n$ (extend $\iii$ or $\jjj$ with trailing zeros if necessary). Then
\[
  x_{\iii} - x_{\jjj} = \sum_{k=1}^{m} (i_k - j_k) r^{k-1} = 0
\]
and $i_k - j_k \ne 0$ for at least one $k \in \{2,\dots,m\}$. This all shows that if $U$ is not feasible for the OSC, then $r$ has to be an algebraic number. We conclude that the OSC is satisfied for each transcendental value of $r$. Recall that the set of algebraic numbers is only countable.

Note that if $r$ is transcendental, then by Remark \ref{rem:standardOSC} the standard OSC is satisfied with $\mathcal{O} = U \cup \bigcup_{\iii \in \{0,1\}^*} \varphi_\iii(U) = M\setminus \mathcal{I}$ as the feasible open set.
\end{example}

%\begin{note}
% Markku will change the following text.
%\end{note}

%A closer examination of the example above suggests that the ball condition fails in it because of the lack of proper separation %between the images $\varphi_\iii(U)$ and $\varphi_\jjj(U)$ with $\iii \perp \jjj$, no matter how small the feasible set $U$ is made. %(Note that the feasible open sets in this example are exactly the nonempty open subsets of $\mathcal{O}$, assuming that the OSC holds.)
We now strive for a better situation with respect to separation between disjoint images of a feasible open set than what was observed in the example above. It is in fact easy to see that if there is a feasible open set $U$ such that for every $\iii \in I^*$, one can find  a large enough ball inside $\varphi_\iii(U)$, with radius comparable to the diameter of $\varphi_\iii(U)$, then the ball condition holds. 
% In fact, it follows from Proposition \ref{prop:wegotballs} that a semiconformal IFS $\{\varphi_i : i\in I\}$ in a doubling metric space satisfies the ball condition if and only if there exists a feasible open set $U$ for which there is an $\eps > 0$ and $x\in U$ such that
%\begin{equation*} %\label{eq:properseparation}
%  d(\varphi_\iii(x),\varphi_\jjj(x)) \ge \eps(\underline{s}_\iii + \underline{s}_\jjj) \,\text{ whenever $\iii \perp \jjj$.}
%\end{equation*}
Fortunately, in the semiconformal setting there is a natural condition under which every bounded feasible open set $U$ is like this. %Moreover, it is valid in situations we usually encounter (such as Remark \ref{rmk:semiconf}(iii) and the properly Euclidean setting to be defined thereafter).
To introduce the condition, we assume that $\mathcal{F} = \{ \varphi_i \colon M \to M \}_{i \in I}$ is a semiconformal IFS with invariant set $E$, and refer any dense open set $W \subset M$ satisfying $W \cap E \ne \emptyset$ as an \emph{essential open set} (for $\mathcal{F}$). We say that $\mathcal{F}$ is \emph{properly semiconformal} if there is an essential open set $W \ne M$ such that for each $x \in W$ there is a constant $\lambda_x \ge 1$ so that
\begin{equation}\label{eq:properdist}
  \dist(\varphi_\iii(x), \varphi_\iii(M \setminus W)) \le \lambda_x \dist(\varphi_\iii(x), M \setminus \varphi_\iii(W) )
\end{equation}
for every $\iii \in I^*$.
%\end{definition}
The next proposition will put this definition in a proper perspective. Note that with $\iii \in I^*$ and $\overline{s}_\iii$ from \eqref{eq:semiconformality} we always have
\begin{equation}\label{eq:alwaysright}
  \varphi_\iii(B(x,r)) \subset B(\varphi_\iii(x),\overline{s}_\iii r)
\end{equation}
for $x\in M$ and $r>0$, whether $\mathcal{F}$ is properly semiconformal or not.

\begin{proposition}\label{prop:propersemiconf}
   A semiconformal IFS is properly semiconformal if and only if there is an essential open set $W \varsubsetneq M$ such that
for each $x \in W$ there is $r_x > 0$ so that
%%%%%%%%%%%%%%
%the points $x\in M$ for which there exists an $r_x>0$ such that
%\begin{equation}\label{eq:propersemiconf}
% B(\varphi_\iii(x),\underline{s}_\iii r) \subset \varphi_\iii(B(x,r)) \text{ whenever $0<r\le r_x$ and $\iii \in I^*$}
%\end{equation}
%form a dense subset of $M$.
\begin{equation}\label{eq:propersemiconf}
 B(\varphi_\iii(x),\underline{s}_\iii r) \subset \varphi_\iii(B(x,r)) 
\end{equation}
whenever $x \in W$, $0<r\le r_x$ and $\iii \in I^*$. 
\end{proposition}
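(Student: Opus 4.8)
The plan is to translate both conditions into statements about two distances associated with $\varphi_\iii$ and then compare them, exploiting that the lower ratio $\underline{s}_\iii$ cancels out so that the resulting constants are uniform in $\iii$.

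First I would record the elementary inclusion that holds for every $\iii \in I^*$, every $x \in M$ and every $r>0$, namely
\[
 B(\varphi_\iii(x),\underline{s}_\iii r) \cap \varphi_\iii(M) \subset \varphi_\iii(B(x,r)).
\]
Indeed, if $\varphi_\iii(y)$ lies in the ball on the left then the lower bound in \eqref{eq:semiconformality} forces $\underline{s}_\iii d(x,y) \le d(\varphi_\iii(x),\varphi_\iii(y)) < \underline{s}_\iii r$, whence $y \in B(x,r)$. Consequently, for fixed $\iii$, $x$ and $r$, the inclusion \eqref{eq:propersemiconf} is equivalent to $B(\varphi_\iii(x),\underline{s}_\iii r) \subset \varphi_\iii(M)$, i.e.\ to $\dist(\varphi_\iii(x), M \setminus \varphi_\iii(M)) \ge \underline{s}_\iii r$. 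Thus, having fixed an essential open set $W \varsubsetneq M$ and a point $x \in W$, condition \eqref{eq:propersemiconf} (for all $0<r\le r_x$ and all $\iii$) is precisely the requirement that $\dist(\varphi_\iii(x), M \setminus \varphi_\iii(M)) \ge \underline{s}_\iii r_x$ for every $\iii \in I^*$.

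Next I would rewrite the right-hand side of \eqref{eq:properdist}. Since $\varphi_\iii$ is injective, $M \setminus \varphi_\iii(W) = (M \setminus \varphi_\iii(M)) \cup \varphi_\iii(M \setminus W)$, so writing $A_\iii = \dist(\varphi_\iii(x), \varphi_\iii(M \setminus W))$ and $B_\iii = \dist(\varphi_\iii(x), M \setminus \varphi_\iii(M))$ we get $\dist(\varphi_\iii(x), M \setminus \varphi_\iii(W)) = \min\{A_\iii, B_\iii\}$. Hence \eqref{eq:properdist} reads $A_\iii \le \lambda_x \min\{A_\iii, B_\iii\}$, which, because $\lambda_x \ge 1$, holds if and only if $A_\iii \le \lambda_x B_\iii$. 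Finally, applying \eqref{eq:semiconformality} to points of $M \setminus W$ and taking infima yields the two-sided estimate
\[
 \underline{s}_\iii\, \rho_x \le A_\iii \le \overline{s}_\iii\, \rho_x \le D\,\underline{s}_\iii\, \rho_x,
\]
where $\rho_x := \dist(x, M \setminus W)$ and the last step uses $\overline{s}_\iii \le D\underline{s}_\iii$; note that $\rho_x > 0$ because $W$ is open and $W \ne M$.

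With these three observations the equivalence reduces to a one-line computation in each direction, the point being that $\underline{s}_\iii$ cancels. If $\mathcal{F}$ is properly semiconformal, then $B_\iii \ge A_\iii/\lambda_x \ge \underline{s}_\iii \rho_x/\lambda_x$, so \eqref{eq:propersemiconf} holds with $r_x := \rho_x/\lambda_x$. Conversely, if \eqref{eq:propersemiconf} holds then $B_\iii \ge \underline{s}_\iii r_x > 0$, whence $A_\iii/B_\iii \le D\underline{s}_\iii\rho_x/(\underline{s}_\iii r_x) = D\rho_x/r_x$, and $\mathcal{F}$ is properly semiconformal with $\lambda_x := \max\{1, D\rho_x/r_x\}$; the same $W$ serves both conditions. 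I expect the only genuinely delicate step to be the decomposition of $M \setminus \varphi_\iii(W)$ and the resulting recognition that \eqref{eq:properdist} is really a comparison of $A_\iii$ with the distance $B_\iii$ to the ``true boundary'' $M \setminus \varphi_\iii(M)$ of the image, where alone the ball inclusion can fail; once that is in place, semiconformality handles everything else.
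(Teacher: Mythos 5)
Your proof is correct. It rests on the same two basic estimates as the paper's argument --- the lower bound in \eqref{eq:semiconformality} applied to preimages of points in a small ball, and the two-sided control $\underline{s}_\iii \rho_x \le \dist(\varphi_\iii(x),\varphi_\iii(M\setminus W)) \le D\underline{s}_\iii \rho_x$ with $\rho_x = \dist(x,M\setminus W)$ --- but it is organized around a genuinely different device. The paper's forward direction fixes a ball $B(x,R)\subset W$ and shows $B(\varphi_\iii(x),\underline{s}_\iii\lambda_x^{-1}R)\subset\varphi_\iii(W)$ by a pointwise contradiction, and its reverse direction estimates $\dist(\varphi_\iii(x),\varphi_\iii(M\setminus W))$ via a single auxiliary point $x_0\in M\setminus W$. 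You instead isolate two structural facts: first, that for fixed data the inclusion \eqref{eq:propersemiconf} is equivalent (via the elementary inclusion $B(\varphi_\iii(x),\underline{s}_\iii r)\cap\varphi_\iii(M)\subset\varphi_\iii(B(x,r))$) to the single inequality $\dist(\varphi_\iii(x),M\setminus\varphi_\iii(M))\ge\underline{s}_\iii r$; and second, that injectivity yields the disjoint decomposition $M\setminus\varphi_\iii(W)=(M\setminus\varphi_\iii(M))\cup\varphi_\iii(M\setminus W)$, so that \eqref{eq:properdist} collapses to the comparison $A_\iii\le\lambda_x B_\iii$ of your two distances. This buys transparency: both implications become one-line cancellations of $\underline{s}_\iii$, and it becomes visible that $W$ enters \eqref{eq:propersemiconf} only through the point $x$, the sole obstruction to the ball inclusion being the ``boundary'' $M\setminus\varphi_\iii(M)$ of the image. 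What the paper's more hands-on argument buys in return is that it never needs the set identity and exhibits explicit preimages in $W$ for the points of the small ball. The one degenerate case worth a remark in a written-up version is a surjective $\varphi_\iii$, for which $B_\iii$ is an infimum over the empty set; with the convention $B_\iii=\infty$ both of your implications go through unchanged.
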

\begin{proof}
  First assume that $\{ \varphi_i \}_{i \in I}$ is a properly semiconformal IFS with an essential open set $W$ having the required properties. Take any $x\in W$ with $\lambda_x>0$ as in \eqref{eq:properdist} and choose $R > 0$ so that $B(x,R) \subset W$. We begin by showing that
\[
  B(\varphi_\iii(x), \underline{s}_\iii \lambda_x^{-1}R) \subset \varphi_\iii(W)
\]
for any $\iii \in I^*$.
Assume that for some $\iii \in I^*$ the contrary holds. Then there are points $y \in M\setminus \varphi_\iii(W)$ and $x' \in M\setminus W$ such that $d(\varphi_\iii(x),y) < \underline{s}_\iii \lambda_x^{-1}R$ and $d(\varphi_\iii(x), \varphi_\iii(x')) < \underline{s}_\iii R$. Noticing that $x'\notin B(x,R)$ however leads to the contradiction
\[
    R \le d(x,x') \le \underline{s}_\iii^{-1} d(\varphi_\iii(x), \varphi_\iii(x'))  < R.
\]
Thus for each $y \in B(\varphi_\iii(x), \underline{s}_\iii \lambda_x^{-1}R)$ there is an $x' \in W$ for which $\varphi_\iii(x') = y$. Now choose $r_x = \lambda_x^{-1}R$ and assume that $\iii \in I^*$ and $0 < r \le r_x$. Then with any $y = \varphi_\iii(x') \in B(\varphi_\iii(x), \underline{s}_\iii r)$ we have
\[
  d(x,x') \le \underline{s}_\iii^{-1} d(\varphi_\iii(x),\varphi_\iii(x')) < \underline{s}_\iii^{-1}\cdot\underline{s}_\iii r = r
\]
so that $y \in \varphi_\iii(B(x,r))$. Consequently, we have \eqref{eq:propersemiconf}.

For the reverse implication, take $x\in W$, $\iii \in I^*$ and assume that \eqref{eq:propersemiconf} holds for $0 < r \le r_x$.
Also fix an arbitrary $x_0 \in M\setminus W$. Then we have
$
  B(\varphi_\iii(x), \underline{s}_\iii r_x) \subset \varphi_\iii(W)
$
and thus
\[
  \dist(\varphi_\iii(x), M\setminus\varphi_\iii(W)) \ge \underline{s}_\iii r_x.
\]
On the other hand, with $D \ge 1$ from the definition of semiconformality we get
\[
  \dist(\varphi_\iii(x), \varphi_\iii(M \setminus W)) \le d(\varphi_\iii(x),\varphi_\iii(x_0)) \le D\underline{s}_\iii d(x,x_0).
\]
As a conclusion,
\[
  \dist(\varphi_\iii(x), \varphi_\iii(M \setminus W)) \le Dr_x^{-1} d(x,x_0) \dist(\varphi_\iii(x), M\setminus\varphi_\iii(W))
\]
and we are done.
\end{proof}

\begin{remarks}\label{rmk:semiconf}
(i) Assume that we have a properly semiconformal IFS which satisfies the (S)OSC. Let $E$ be the invariant set and let $U$ be a feasible open set. The denseness of the essential open set $W$ and having $W\cap E \ne \emptyset$ allow us to assume that $U \subset W$. Then Proposition \ref{prop:propersemiconf} clearly implies that $\varphi_\iii(U)$ is open for each $\iii \in I^*$. Therefore, recalling Remark \ref{rem:standardOSC}, the OSC and the SOSC are equivalent to their standard versions. Furthermore, given any feasible open set $U$, we can take $x \in U \cap W$ and choose $0<r<r_x$ such that $B(x,r) \subset U$, and then it is easy to see that \eqref{eq:properseparation} holds with $\eps = \frac r2$. This allows us to conclude that if we have a properly semiconformal IFS, then the OSC implies the ball condition.

(ii) %Proper semiconformality is a metric invariant. By this we mean that
Given two IFSs $\{ \varphi_i \colon M \to M \}_{i \in I}$ and $\{ \psi_i \colon M' \to M' \}_{i \in I}$ which are topologically conjugated by a bi-Lipschitz homeomorphism $h\colon M\to M'$ (so that $\psi_i = h\circ\varphi_i\circ h^{-1}$ for each $i \in I$), it is simple to verify that if either one is properly semiconformal then the same holds for the other. In this sense, proper semiconformality is a metric invariant.

(iii) Any semiconformal IFS for which the defining mappings $\varphi_i$ are bijections is properly semiconformal (in the definition choose $W=M\setminus\{x_0\}$ with an arbitrary $x_0 \in M$). Bijectivity was assumed by A. Schief in \cite{S1996} where he studied the self-similar case in complete metric spaces. It was also assumed (although not mentioned in the paper) %explicitly stated)
by Z. Balogh and H. Rohner in \cite{BR2007} where they carried on the study of the self-similar case. However, the bijectivity assumption is too strong already in the important special case of conformal iterated function systems on Euclidean spaces, as there are no bijective conformal contractions on $\R^n$ with $n \ge 2$ other than the contractive similitudes. Conformal IFSs and separation conditions for them have been studied extensively. For recent developments,
see \cite{LNW2009} and the references therein.
\end{remarks}

Let us now consider a setting suitable, in particular, for conformal iterated function systems on Euclidean spaces. Assuming here that $M \subset \R^n$, we say that an IFS $\mathcal{F}$ formed by mappings $\varphi_i\colon M \to M$, $i \in I$, is \emph{properly Euclidean} if the Euclidean metric is used and $M$ is the closure, in $\R^n$, of an open set $W \varsubsetneq \R^n$ such that $\varphi_i(M) \subset W$ for each $i \in I$. Then $W$ is an essential open set for $\mathcal{F}$. Another crucial observation is that if $U$ is an open proper subset of $\R^n$ and $x\in U$, then there is a point $z\in \R^n \setminus U$ at minimum distance to $x$, and $z$ is \emph{a fortiori} a boundary point of $U$ (simply because $z + t(x-z) \in U$ for all $0<t\le1$).
Thus, noting that for each $\iii \in I^*$ the closed set $\varphi_\iii(M)$ contains the boundary of the open set $\varphi_\iii(W)$, we have
\[
  \dist(\varphi_\iii(x), M \setminus \varphi_\iii(W)) = \dist(\varphi_\iii(x), \varphi_\iii(M) \setminus \varphi_\iii(W))
\]
for $x\in W$ and $\iii \in I^*$. So \eqref{eq:properdist} holds here with $\lambda_x = 1$. Consequently, any semiconformal IFS which is properly Euclidean is properly semiconformal.

Using similar reasoning, we get the following generalization beyond the Euclidean case: if $M$ is the closure of an open and proper subset $W$ of a complete quasiconvex space and $W$ meets the same criteria as above, then a semiconformal IFS defined on $M$ is always properly semiconformal. Here by a quasiconvex space we mean a metric space $(X,d)$ for which there is a constant $C \ge 1$ such that any two points $x,y\in X$ can be joined by a rectifiable curve of length at most $C d(x,y)$.

\begin{example}\label{ex:ultra}
To get a further example of a situation where semiconformality implies proper semiconformality, this time in a totally disconnected space, assume that for each $i \in I$ there is a contractive mapping $\varphi_i\colon I^\infty \to I^\infty$ on the symbol space $(I^\infty,d_2)$ such that $\varphi_i(C)$ is a cylinder whenever $C$ is a cylinder. Then $\varphi_\iii(I^\infty)$ is a cylinder for each $\iii \in I^*$.
Choose an arbitrary $x_0 \in I^\infty$ and set $W = I^\infty \setminus \{x_0\}$. Let $\iii \in I^*$. Note that by the definition of the metric $d_2$, for any $\jjj \in I^*$ and $\hhh \in [\jjj]$ we have
\[
  \dist(\hhh, I^\infty \setminus [\jjj]) = \dist([\jjj], I^\infty \setminus [\jjj]) = 2^{1-|\jjj|} = 2\diam([\jjj]).
\]
So if $\varphi_\iii(I^\infty) = [\jjj]$ then
\begin{align*}
 \dist(\varphi_\iii(x), \varphi_\iii(I^\infty\setminus W)) &= d_2(\varphi_\iii(x), \varphi_\iii(x_0)) \le \diam([\jjj]) \\ &= \tfrac 12 \dist(\varphi_\iii(x),I^\infty \setminus [\jjj])
\end{align*}
which implies that \eqref{eq:properdist} holds with $\lambda_x = 1$ and $M = I^\infty$. Using this observation, we can now
give a simple non-Euclidean example of a non-similitude IFS which is properly semiconformal. Let $I = \{0,1,2\}$ and $J=\{1,2\}$. By defining 
\[
  \varphi_1(i\jjj) = \begin{cases}
                      1\jjj & \text{if $i \ne 0$} \\
                      10\jjj & \text{if $i = 0$}
                    \end{cases}, \quad
  \varphi_2(i\jjj) = \begin{cases}
                      2\jjj & \text{if $i  \ne 0$} \\
                      20\jjj & \text{if $i = 0$}
                    \end{cases}
\]
for $i\in I$ and $\jjj \in I^\infty$, we get an IFS $\{\varphi_1,\varphi_2\}$ on $I^\infty$. Given a cylinder $[\iii]$, $\iii \in I^*$, it is clear that $\varphi_j([\iii])$ for $j \in J$ is one of the following cylinders: $[1\iii]$, $[10\iii]$, $[2\iii]$ or $[20\iii]$. It is also easy to see that with any $\jjj \in J^*$ and $\hhh \in I^\infty$ we have either $\varphi_\jjj(\hhh) = \jjj\hhh$ or $\varphi_\jjj(\hhh) = \jjj0\hhh$. This gives
\[
  2^{-|\jjj| - 1} d_2(\hhh,\kkk) \le d_2(\varphi_\jjj(\hhh),\varphi_\jjj(\kkk)) \le 2^{-|\jjj|} d_2(\hhh,\kkk)
\]
for $\jjj \in J^*$ and $\hhh,\kkk \in I^\infty$, establishing the semiconformality of the IFS. Moreover, since both $\varphi_1$ and $\varphi_2$ map cylinders to cylinders, the IFS in this example is properly semiconformal.
\end{example}

The following theorem was proved for the properly Euclidean case in \cite[Corollary 5.8]{KV2008}. In \cite[Remark 6.2]{BR2007} it was suggested that the generalization to doubling metric spaces could be done by extending the thermodynamical formalism \cite{F1997} to that setting. The proof given here uses the more direct Moran construction approach. %perspective.

\begin{theorem}\label{thm:equiv}
For a properly semiconformal IFS in a complete doubling metric space the following conditions are equivalent:
\begin{enumerate}
 \item The ball condition.
 \item $\HH^t(E) > 0$ with $P(t) = 0$.
 \item The open set condition.
 \item The strong open set condition.
\end{enumerate}
\end{theorem}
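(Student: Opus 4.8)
The plan is to establish the cycle of implications $(1)\Rightarrow(3)\Rightarrow(4)\Rightarrow(2)\Rightarrow(1)$, drawing on the machinery already assembled. Most of the pieces are in place: the equivalence $(1)\Leftrightarrow(2)$ is exactly the content of Corollary \ref{cor:BallCondEquivPressureCond} (the corresponding CMC is tractable by Proposition \ref{prop:semiIFStractable}, and the space is doubling), so in fact I only need to close the loop through the open set conditions. Since trivially $(4)\Rightarrow(3)$, the real work is to prove $(1)\Rightarrow(3)$ and $(4)\Rightarrow(2)$, after which everything chains together. The proper semiconformality hypothesis is what makes both of these accessible.

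First I would prove $(1)\Rightarrow(3)$, i.e.\ that the ball condition implies the OSC. This is the ``easy verification'' deferred earlier in the text. Assuming the ball condition with constant $\delta$, I would fix $x \in E$ and a small radius, and use that the balls $B(x_\iii,\delta r)$ over $\iii \in Z(x,r)$ are pairwise disjoint and sit near the construction pieces $E_\iii = \varphi_\iii(E)$. The idea is to manufacture a feasible open set by taking a suitable union of open balls adapted to the $\varphi_\iii$, exploiting \eqref{eq:alwaysright} to control the images $\varphi_\iii(U)$ from above and the ball condition to keep disjoint pieces genuinely separated. Concretely, one expects that a fixed open ball $U = B(z,\eps)$ around an appropriate point, intersected with the essential open set $W$, will satisfy $\varphi_\iii(U)\cap\varphi_\jjj(U)=\emptyset$ for $\iii\perp\jjj$ once $\eps$ is chosen in terms of $\delta$ and the semiconformality constant $D$.

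The heart of the argument is $(4)\Rightarrow(2)$, showing that the SOSC forces $\HH^t(E)>0$ at the critical exponent $t=P^{-1}(0)$. Here is where proper semiconformality is indispensable, via Proposition \ref{prop:propersemiconf}: given a feasible open set $U$ meeting $E$, I would pass (using density of $W$ and $W\cap E\ne\emptyset$) to $U\subset W$, fix a point $x\in U\cap W$ and $0<r<r_x$ with $B(x,r)\subset U$, and invoke \eqref{eq:propersemiconf} to deduce the inner ball inclusion
\[
  B(\varphi_\iii(x),\underline{s}_\iii r)\subset\varphi_\iii(B(x,r))\subset\varphi_\iii(U).
\]
Because the $\varphi_\iii(U)$ are pairwise disjoint for incomparable $\iii$, these inner balls are disjoint, and by \eqref{eq:semiequal} their radii are comparable to $\diam(E_\iii)$. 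This is precisely the separation \eqref{eq:properseparation}, which by Corollary \ref{cor:BallCondEquivPressureCond}$(4)$ yields the ball condition and hence $(1)$; combined with $(1)\Leftrightarrow(2)$ from the same corollary, I obtain $(2)$. In fact this shows $(4)\Rightarrow(1)$ directly, which is even cleaner than routing through $(2)$.

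Stringing it together: $(1)\Rightarrow(3)$ by the construction of a feasible set, $(3)\Rightarrow(4)$ is where I must be slightly careful—the OSC in our formulation need not a priori meet $E$—but proper semiconformality again saves the day, since Remark \ref{rmk:semiconf}(i) records that under proper semiconformality the OSC already implies the ball condition, and the ball condition together with $(1)\Leftrightarrow(2)$ gives $\HH^t(E)>0$, whence the standard SOSC dimension result (or Proposition \ref{prop:SOSCdim}) places $E$ inside a feasible set with positive measure, forcing $U\cap E\ne\emptyset$. The main obstacle I anticipate is precisely this step, separating OSC from SOSC: in a general doubling space the OSC alone is genuinely weaker (Example \ref{ex:OSCnoSOSC}), so the entire force of \emph{proper} semiconformality must be brought to bear to collapse the distinction. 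I would therefore organize the proof so that proper semiconformality is used first to upgrade OSC to the ball condition via Remark \ref{rmk:semiconf}(i), and only afterward recover the SOSC from positivity of the Hausdorff measure.
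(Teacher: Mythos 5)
Your overall architecture is close to the paper's --- the equivalence $(1)\Leftrightarrow(2)$ via Corollary \ref{cor:BallCondEquivPressureCond}, the implication $(3)\Rightarrow(1)$ via Remark \ref{rmk:semiconf}(i), and the trivial $(4)\Rightarrow(3)$ all match --- but your closing step $(3)\Rightarrow(4)$ has a genuine gap. You propose to pass from the OSC to the SOSC by first deducing $\HH^t(E)>0$ and then arguing that ``the standard SOSC dimension result (or Proposition \ref{prop:SOSCdim}) places $E$ inside a feasible set with positive measure, forcing $U\cap E\ne\emptyset$.'' Proposition \ref{prop:SOSCdim} \emph{assumes} the SOSC, so it cannot be used to establish it, and no implication of the form ``$\HH^t(E)>0$ at the critical exponent implies that some feasible open set meets $E$'' is available in this generality; that is exactly the delicate point that Schief's Euclidean argument handles with $\R^n$-specific geometry and that Example \ref{ex:OSCnoSOSC} shows can fail without proper semiconformality. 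As written, this step is circular.

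The repair is essentially contained in your own second paragraph, and it is what the paper does: prove $(1)\Rightarrow(4)$ directly. The ball condition gives the finite clustering property (the space is doubling), and Proposition \ref{prop:wegotballs} then supplies a \emph{single} point $x\in E$ and a $\delta>0$ such that the balls $B(\varphi_\iii(x),\delta\diam(E_\iii))$ are pairwise disjoint for \emph{all} incomparable pairs $\iii\perp\jjj$. Note that the ball condition by itself only separates pieces indexed by $Z(x,r)$, i.e.\ pieces at comparable scales, so this upgrade (which goes through the Assouad embedding and \cite[Corollary 4.8]{KV2008}) is not the routine verification your sketch suggests. Setting $U=B(x,D^{-1}\delta\diam(E))$, inclusion \eqref{eq:alwaysright} together with $\overline{s}_\iii\le D\underline{s}_\iii$ and \eqref{eq:semiequal} gives $\varphi_\iii(U)\subset B(\varphi_\iii(x),\delta\diam(E_\iii))$, so $U$ is feasible for the OSC; since its center $x$ lies in $E$, the same set witnesses the SOSC. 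With $(1)\Rightarrow(4)$ in hand, the chain $(4)\Rightarrow(3)\Rightarrow(1)\Leftrightarrow(2)$ closes the proof, your separate construction of a feasible set for $(1)\Rightarrow(3)$ becomes redundant, and your $(4)\Rightarrow(2)$ paragraph, while correct, is subsumed by Remark \ref{rmk:semiconf}(i) applied to the OSC.
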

\begin{proof}
The equivalence of (1) and (2) has been established in Corollary \ref{cor:BallCondEquivPressureCond}. In Remark \ref{rmk:semiconf}(i) it was noted that under the given assumptions, (3) implies (1). Clearly (4) implies (3). To complete the proof, it is thus enough show that (1) implies (4).

Assume that (1) holds. %We will prove that then (4) holds.
Let $\delta > 0$ and $x \in E$ be from Proposition \ref{prop:wegotballs} and $D \ge 1$ from the definition on semiconformality. Then by \eqref{eq:alwaysright} we get 
\[
 \varphi_\iii(B(x, D^{-1}\delta  \diam(E))) \subset B(\varphi_\iii(x), D^{-1}\delta \overline{s}_\iii \diam(E))
\]
\[
\subset B(\varphi_\iii(x), \delta \underline{s}_\iii \diam(E)) \subset B(\varphi_\iii(x),\delta \diam(\varphi_\iii(E)))
\]
for every $\iii \in I^*$. Therefore from Proposition \ref{prop:wegotballs} we get
\[
 \varphi_\iii(B(x, D^{-1}\delta  \diam(E))) \cap \varphi_\jjj(B(x, D^{-1}\delta  \diam(E))) = \emptyset
\]
whenever $\iii \perp \jjj$. Clearly $x \in E \cap B(x, D^{-1}\delta  \diam(E))$. Thus the IFS satisfies the SOSC, and the proof is finished.
%
%Assume that (3) holds for a properly semiconformal IFS. We will prove (1). Let $U$ be the open set in the open set condition. %Recalling Remark \ref{rem:standardOSC}(ii), we may assume that there are points $x \in U$ and $y \in E$ for which $d(x,y) < %D^{-1}\diam(E)$. We may further assume that there is a radius
%$r_x>0$ such that $B(x,r_x) \subset U$ and \eqref{eq:propersemiconf} is satisfied for $\iii \in I^*$ and $0<r\le r_x$. Now we show that
%\[
% \delta := \frac{r_x \min_{i \in I}\underline{s}_i}{D \diam(E)}
%\]
%is a valid $\delta$ for the ball condition. Let $z \in E$, $r > 0$ and choose $x_\iii = \varphi_\iii(x)$
%for each $\iii \in Z(z,r)$. Now
%\begin{align*}
% \dist(x_\iii, E_\iii) &\le d(\varphi_\iii(x), \varphi_\iii(y)) \le \overline{s}_\iii d(x,y) \\
% &< \underline{s}_\iii \diam(E) \le \diam(E_\iii) \le r
%\end{align*}
%for every $\iii \in Z(z,r)$, so it remains to check that for distinct $\iii, \jjj \in Z(z,r)$ the balls $B(x_\iii, \delta r)$ and $B(x_\jjj, \delta r)$ are disjoint. By the defining property \eqref{eq:OSC} of $U$ it is enough to prove that for any $\iii \in Z(z,r)$ we have $B(x_\iii, \delta r) \subset \varphi_\iii(U)$. Assuming $\iii \in Z(z,r)$ gives
%\[
% \delta r < \frac{r_x \min_{i \in I}\underline{s}_i}{D \diam(E)} \diam(E_{\iii^-}) \le \frac{r_x}{D \diam(E)} \diam(E_\iii) \le %\frac{r_x \overline{s}_\iii}{D} \le \underline{s}_\iii r_x,
%\]
%so we get from \eqref{eq:propersemiconf} that $B(x_\iii, \delta r) \subset \varphi_\iii(B(x,r_x))$ and we are done.
\end{proof}

For the rest of this section we let $M$ be any complete metric space. In this setting the OSC ceases to imply any bounds on the size of the invariant set. As shown in \cite[Example 3.1]{S1996}, the invariant set of a similitude IFS in a complete metric space might consist of a single point, even when the OSC is satisfied. The SOSC, however, continues to be relevant in the general setting. To show this, we first recall a useful result by K. Falconer. An IFS is said to satisfy the \emph{strong separation condition (SSC)} if the images $\varphi_i(E)$,
$i \in I$, are pairwise disjoint for the invariant set $E$.

\begin{proposition}\label{prop:LowerDimBoundFromSSC}
  Let $E$ be the invariant set of an IFS $\{ \varphi_i : i \in I\}$ for which there are constants
$s_i$, $i \in I$, such that
\[
  d(\varphi_i(x), \varphi_i(y)) \ge s_i \,d(x,y)
\]
for $x,y \in M$ and $i\in I$. If the IFS satisfies the SSC, we have $dim_H(E) \ge d$ where
\[
 \sum_{i \in I} s_i^d = 1.
\]
\end{proposition}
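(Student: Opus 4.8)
The plan is to prove the bound by the mass distribution principle: I would build a Bernoulli measure on $E$ whose scaling at small radii is governed by the exponent $d$ and by the gap that the SSC forces between the pieces $\varphi_i(E)$. First I would fix that gap. Since $E$ is compact and the sets $\varphi_i(E)$, $i\in I$, are pairwise disjoint and compact, the number $\rho := \min_{i\ne j}\dist(\varphi_i(E),\varphi_j(E))$ is positive. Writing $\varphi_\iii=\varphi_{i_1}\circ\cdots\circ\varphi_{i_n}$, $E_\iii=\varphi_\iii(E)$ and $s_\iii=s_{i_1}\cdots s_{i_n}$, iterating the hypothesis gives $d(\varphi_\iii(x),\varphi_\iii(y))\ge s_\iii\, d(x,y)$; in particular each $\varphi_\iii$ is injective (we may assume every $s_i>0$, discarding the maps with $s_i=0$ as their weight below is $0$), and the separation is inherited by ancestors: for distinct letters $i'\ne j'$ one gets $\dist(E_{\kkk i'},E_{\kkk j'})\ge s_\kkk\,\rho$ for every $\kkk\in I^*$. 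A consequence is that the level-$n$ pieces are pairwise disjoint, so the projection satisfies $\pi^{-1}(E_\iii)=[\iii]$.

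Next I would put a measure on $E$. The number $d$ is defined by $\sum_{i\in I}s_i^d=1$ (a unique positive solution exists since $t\mapsto\sum_i s_i^t$ decreases from $\#I\ge 2$ to $0$), so the weights $p_i=s_i^d$ form a probability vector. Let $\nu$ be the associated Bernoulli measure on $I^\infty$ and $\mu=\nu\circ\pi^{-1}$ its push-forward, a Borel probability measure on $E$. By the previous paragraph $\mu(E_\iii)=\nu([\iii])=s_\iii^d$ for every $\iii\in I^*$.

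The heart of the argument is the ball estimate $\mu(B(x,r))\le(2/\rho)^d\,r^d$ for all $x$ and all $0<r\le\rho/2$. To prove it I would fix such an $r$ and introduce the stopping set $\Lambda$ of words $\iii$ with $s_\iii\rho<2r\le s_{\iii^-}\rho$; since $s_{\iii|_n}\to 0$ while starting from the value $\rho\ge 2r$, every infinite word has exactly one prefix in $\Lambda$, so the family $\{E_\iii:\iii\in\Lambda\}$ covers $E$. The key claim is that at most one $\iii\in\Lambda$ meets $B(x,r)$: if $\iii,\jjj\in\Lambda$ were incomparable with longest common prefix $\kkk$, then $\kkk$ is a prefix of $\iii^-$, whence $s_\kkk\ge s_{\iii^-}\ge 2r/\rho$, and by the inherited separation $\dist(E_\iii,E_\jjj)\ge\dist(E_{\kkk i'},E_{\kkk j'})\ge s_\kkk\rho\ge 2r$, which is incompatible with both pieces meeting a ball of radius $r$. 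Hence $B(x,r)\cap E$ lies in a single $E_\iii$ with $\iii\in\Lambda$, so $\mu(B(x,r))\le\mu(E_\iii)=s_\iii^d<(2r/\rho)^d$. Finally, the mass distribution principle converts this into a measure lower bound: any $U$ with $\diam U\le\rho/4$ sits in $B(x,2\diam U)$ for $x\in U$, so $\mu(U)\le(4/\rho)^d(\diam U)^d$, giving $\HH^d(E)\ge(\rho/4)^d\mu(E)=(\rho/4)^d>0$ and therefore $\dimh(E)\ge d$.

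I expect the main obstacle to be exactly the ball estimate, and within it the claim that at most one stopping piece meets $B(x,r)$. In a general metric space one cannot bound the number of mutually separated pieces inside a ball by any doubling or packing argument; what saves the estimate is the tree structure, namely that the distance between two stopping pieces is bounded below by the separation of their most recent common ancestor, which the stopping rule forces to exceed $2r$. Getting the bookkeeping of this common-ancestor estimate right, together with the well-definedness of $\Lambda$ and the reduction to $s_i>0$, is the only delicate point; the rest is routine.
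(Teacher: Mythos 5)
Your proof is correct and is essentially the argument the paper relies on: the paper simply cites Falconer's Proposition 9.7 and asserts that its Euclidean proof remains valid in a general metric space, and what you have written out (the Bernoulli measure with weights $s_i^d$, the stopping set determined by $s_\iii\rho<2r\le s_{\iii^-}\rho$, the common-ancestor separation bound, and the mass distribution principle) is precisely that proof, correctly adapted so that no Euclidean or doubling property is used. In effect you have supplied the verification the paper leaves implicit.
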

\begin{proof}
  Although the proof of this result in \cite[Proposition 9.7]{F1990} is formulated in the Euclidean setting, it remains valid in the general case.
\end{proof}

\begin{lemma}\label{lem:UniformSemiconformalBounds}
  Assuming that constants $\underline{s}_\iii$, $\iii \in I^*$, correspond to a semiconformal IFS (with pressure $P$) by way of \eqref{eq:semiconformality}, there is a constant $C \ge 1$ such that
\[
   C^{-1} \underline{s}_\iii \underline{s}_\jjj \le \underline{s}_{\iii\jjj} \le C \underline{s}_\iii \underline{s}_\jjj
\]
for any $\iii,\jjj \in I^*$ and
\[
 C^{-t} e^{nP(t)} \le \sum_{\iii \in I^n} \underline{s}_\iii^t \le C^{t} e^{nP(t)}
\]
for all $t \ge 0$ and $n \in \N$.
\end{lemma}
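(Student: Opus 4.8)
The plan is to prove the two assertions in turn, deriving the second from the first. \emph{The first} is a quasi-multiplicativity statement for the numbers $\underline{s}_\iii$ that comes directly from composing the defining inequalities \eqref{eq:semiconformality}. \emph{The second} is then a soft consequence of this together with Fekete's lemma on subadditive sequences and the fact, recorded after Proposition \ref{prop:semiIFStractable}, that the pressure $P(t)$ may be computed using $s_\iii = \underline{s}_\iii$ in place of $\diam(E_\iii)$.

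First I would establish the multiplicative bounds. Writing $\varphi_{\iii\jjj} = \varphi_\iii \circ \varphi_\jjj$ and applying \eqref{eq:semiconformality} first to $\varphi_\iii$ at the points $\varphi_\jjj(x),\varphi_\jjj(y)$ and then to $\varphi_\jjj$ at $x,y$, I obtain
\[
  \underline{s}_\iii \underline{s}_\jjj\, d(x,y) \le d(\varphi_{\iii\jjj}(x),\varphi_{\iii\jjj}(y)) \le \overline{s}_\iii \overline{s}_\jjj\, d(x,y)
\]
for all $x,y \in M$. Since $\diam(E) > 0$, there is a pair $x \ne y$; comparing the display with the defining inequalities $\underline{s}_{\iii\jjj} d(x,y) \le d(\varphi_{\iii\jjj}(x),\varphi_{\iii\jjj}(y)) \le \overline{s}_{\iii\jjj} d(x,y)$ and dividing by $d(x,y)>0$ yields $\underline{s}_{\iii\jjj} \le \overline{s}_\iii \overline{s}_\jjj$ and $\underline{s}_\iii \underline{s}_\jjj \le \overline{s}_{\iii\jjj}$. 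Feeding in the relation $\overline{s}_\kkk \le D \underline{s}_\kkk$ on both sides then gives $D^{-1}\underline{s}_\iii\underline{s}_\jjj \le \underline{s}_{\iii\jjj} \le D^2 \underline{s}_\iii \underline{s}_\jjj$, so the first claim holds with $C = D^2$.

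For the pressure estimate I would set $a_n = \sum_{\iii \in I^n} \underline{s}_\iii^t$ and raise the first claim to the power $t$. Since concatenation $(\iii,\jjj)\mapsto\iii\jjj$ is a bijection of $I^m \times I^n$ onto $I^{m+n}$, summing the resulting inequalities over $\iii \in I^m$ and $\jjj \in I^n$ produces $C^{-t} a_m a_n \le a_{m+n} \le C^t a_m a_n$. Hence $n \mapsto C^t a_n$ is submultiplicative and $n \mapsto C^{-t} a_n$ is supermultiplicative, so $\log(C^t a_n)$ is subadditive and $\log(C^{-t}a_n)$ is superadditive. By Fekete's lemma the former satisfies $\tfrac1n \log(C^t a_n) \ge \inf_m \tfrac1m \log(C^t a_m) = \lim_m \tfrac1m \log a_m = P(t)$, while the latter satisfies the reverse inequality with a supremum in place of the infimum; here I use $\tfrac1n\log a_n \to P(t)$, which is legitimate precisely because the pressure may be evaluated with $\underline{s}_\iii$. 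Exponentiating the two bounds $\log(C^t a_n) \ge nP(t)$ and $\log(C^{-t} a_n) \le n P(t)$ gives exactly $C^{-t} e^{nP(t)} \le a_n \le C^t e^{nP(t)}$, as required.

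The computations are routine; the only points demanding care are bookkeeping ones. I must keep straight which of $C^t a_n$ and $C^{-t} a_n$ is sub- and which is supermultiplicative, so that Fekete's lemma is applied in the correct direction (the submultiplicative sequence controls $a_n$ from below, the supermultiplicative one from above), and I must invoke the flexibility in the choice of $s_\iii$ in the pressure formula so that $P(t)$ can be read off directly from $\underline{s}_\iii$. No genuine obstacle arises beyond these.
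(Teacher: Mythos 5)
Your proof is correct. Both halves check out: the composition argument gives $\underline{s}_\iii\underline{s}_\jjj \le \overline{s}_{\iii\jjj} \le D\,\underline{s}_{\iii\jjj}$ and $\underline{s}_{\iii\jjj} \le \overline{s}_\iii\overline{s}_\jjj \le D^2\underline{s}_\iii\underline{s}_\jjj$, so $C = D^2$ works, and your application of Fekete's lemma is in the right direction (submultiplicativity of $C^t a_n$ forces $\tfrac1n\log(C^t a_n)\ge\inf_m\tfrac1m\log(C^t a_m)=P(t)$, hence the lower bound on $a_n$, and dually for the upper bound), with the identification $\lim_n\tfrac1n\log a_n=P(t)$ justified by the paper's observation that the pressure may be computed with $s_\iii=\underline{s}_\iii$. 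The paper's own proof is a one-line citation: it combines \eqref{eq:semiequal}, Proposition \ref{prop:semiIFStractable} (which makes $\{E_\iii\}$ a CMC) and \cite[Lemma 2.1]{KV2008}, i.e., it transfers the statement to the diameters $\diam(E_\iii)$ via the uniform comparability \eqref{eq:semiequal} and outsources the quasi-multiplicativity and pressure bounds to the cited lemma for controlled Moran constructions. Your argument is the same mechanism unpacked and run directly on the Lipschitz constants; what it buys is self-containedness (no appeal to \cite{KV2008}) and an explicit constant $C=D^2$ depending only on the semiconformality constant, at the cost of a slightly longer write-up. The only point worth making explicit is that extracting $\underline{s}_{\iii\jjj}\le\overline{s}_\iii\overline{s}_\jjj$ and $\underline{s}_\iii\underline{s}_\jjj\le\overline{s}_{\iii\jjj}$ requires a pair $x\ne y$ in $M$, which you correctly note follows from $\diam(E)>0$.
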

\begin{proof}
   Combine \eqref{eq:semiequal}, Proposition \ref{prop:semiIFStractable} and \cite[Lemma 2.1]{KV2008}.
\end{proof}

The first part of the following result was originally shown by A. Schief for self-similar sets on complete metric spaces \cite[Theorem 2.6]{S1996}. The second part makes it clear that in the semiconformal setting, the overlap between the parts $\varphi_i(E)$, $i\in I$, of the invariant set $E$ is negligible, at least in the measure-theoretical sense,
%\[
%  \Theta := \bigcup_{\iii \perp \jjj} \left(\varphi_\iii(E) \cap \varphi_\jjj(E)\right)
%\]
%of points with multiple addresses in the invariant set $E$, is tiny compared to $E$ (at least in the measure-theoretical
%sense)
provided that the SOSC holds.

\begin{proposition}\label{prop:SOSCdim}
  Let $E$ be the invariant set of a semiconformal IFS $\{ \varphi_i : i \in I\}$ defined on a complete metric space. If the SOSC holds, then
\begin{enumerate}
 \item[(i)] $\dimh(E) = P^{-1}(0)$.
 \item[(ii)] $\dimh(\varphi_\iii(E) \cap \varphi_\jjj(E)) < \dimh(E)$ whenever $\iii \perp \jjj$.
\end{enumerate}
\end{proposition}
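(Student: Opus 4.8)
The plan is to treat the two assertions separately: (i) follows by pairing the dimension upper bound already available for Moran constructions with Falconer's strong-separation lower bound, while (ii) is obtained by pushing the overlap into a sub-construction indexed by words that avoid a fixed block.

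For (i), recall that by Proposition \ref{prop:semiIFStractable} the family $\{\varphi_\iii(E)\}_{\iii\in I^*}$ is a tractable CMC, hence a WCMC whose pressure is $P$; write $s=P^{-1}(0)$. Since $P(s)=0\le 0$, Proposition \ref{prop:UpperBoundForDimm} gives $\dimm(E)\le s$, and so $\dimh(E)\le s$ because $E$ is compact. For the reverse inequality I would exploit the SOSC. Fix a feasible open $U$ with $U\cap E\ne\emptyset$, pick $x_0\in U\cap E$ and $\rho>0$ with $B(x_0,\rho)\subset U$, and choose $\mathtt v\in I^*$ with $x_0\in\varphi_{\mathtt v}(E)$ and $\diam(\varphi_{\mathtt v}(E))<\rho$, so that $\varphi_{\mathtt v}(E)\subset U$. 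For each $m$ the finite collection $\{\varphi_{\iii\mathtt v}:\iii\in I^m\}$ is then an IFS whose invariant set $F_m$ lies in $E$ and satisfies the SSC: the pieces $\varphi_{\iii\mathtt v}(F_m)\subset\varphi_\iii(\varphi_{\mathtt v}(E))\subset\varphi_\iii(U)$ are pairwise disjoint for distinct $\iii\in I^m$ by the OSC. Since $d(\varphi_{\iii\mathtt v}(x),\varphi_{\iii\mathtt v}(y))\ge\underline{s}_{\iii\mathtt v}\,d(x,y)$, Proposition \ref{prop:LowerDimBoundFromSSC} yields $\dimh(E)\ge\dimh(F_m)\ge d_m$, where $\sum_{\iii\in I^m}\underline{s}_{\iii\mathtt v}^{\,d_m}=1$. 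Feeding the quasimultiplicativity of $\underline s$ and the estimate $\sum_{\iii\in I^m}\underline{s}_\iii^{\,t}\asymp e^{mP(t)}$ from Lemma \ref{lem:UniformSemiconformalBounds} into this equation gives $|P(d_m)|\le sK/m$ for a constant $K$; since $0\le d_m\le s$ and $P$ is continuous and strictly decreasing with $P(s)=0$, this forces $d_m\to s$, whence $\dimh(E)\ge s$.

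For (ii) the guiding observation is that every point of $A:=\varphi_\iii(E)\cap\varphi_\jjj(E)$ with $\iii\perp\jjj$ carries two incomparable codings, and the block $\mathtt v$ from part (i) cannot occur in both. Concretely, if a word $\kkk$ has a prefix of the form $\kkk_1\mathtt v$ then $\varphi_{\iii\kkk}(E)\subset\varphi_{\iii\kkk_1}(\varphi_{\mathtt v}(E))\subset\varphi_{\iii\kkk_1}(U)$, and likewise for $\mathtt l,\jjj$; since $\iii\kkk_1\perp\jjj\mathtt l_1$, the OSC forces $\varphi_{\iii\kkk}(E)\cap\varphi_{\jjj\mathtt l}(E)=\emptyset$ as soon as both $\kkk$ and $\mathtt l$ contain $\mathtt v$ as a factor. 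Letting $n\to\infty$ along the two codings of a point of $A$, I conclude that at least one of them avoids $\mathtt v$ entirely, so that
\[
 A\subset\varphi_\iii(\pi(\Sigma_{\mathtt v}))\cup\varphi_\jjj(\pi(\Sigma_{\mathtt v})),\qquad \Sigma_{\mathtt v}:=\{\mathtt h\in I^\infty:\mathtt v\text{ is not a factor of }\mathtt h\}.
\]
As $\varphi_\iii,\varphi_\jjj$ are bi-Lipschitz onto their images, this gives $\dimh(A)\le\dimh(\pi(\Sigma_{\mathtt v}))$. Covering $\pi(\Sigma_{\mathtt v})$ by the sets $\varphi_\kkk(E)$ with $\kkk\in I^n$ avoiding $\mathtt v$, whose diameters are comparable to $\underline s_\kkk$, reduces the claim to showing that the restricted pressure
\[
 \tilde P(t):=\lim_{n\to\infty}\frac1n\log\!\!\sum_{\substack{\kkk\in I^n\\ \mathtt v\not\sqsubset\kkk}}\underline{s}_\kkk^{\,t}
\]
(the limit existing by the usual subadditivity) satisfies $\tilde P(t)<0$ for some $t<s$.

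The main obstacle is precisely this strict drop of the pressure, namely $\tilde P(s)<P(s)=0$; once it is established, continuity of $\tilde P$ provides a $t<s$ with $\tilde P(t)<0$, the covering sum then tends to $0$, and $\dimh(\pi(\Sigma_{\mathtt v}))\le t<s$ follows. To prove $\tilde P(s)<0$ I would use a renewal argument organised around the first occurrence of $\mathtt v$: decomposing each word of length $n$ according to whether and where $\mathtt v$ first appears yields a relation tying the restricted sums $T_n=\sum_{\mathtt v\not\sqsubset\kkk}\underline s_\kkk^{\,s}$ to the full sums $\sum_{\iii\in I^n}\underline s_\iii^{\,s}\asymp 1$, in which each occurrence of $\mathtt v$ contributes a fixed multiplicative loss of order $\underline s_{\mathtt v}^{\,s}>0$; passing to generating functions then shows that the radius of convergence of $\sum_n T_n z^n$ strictly exceeds $1$, i.e. $T_n$ decays geometrically. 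The delicate point to watch is that the quasimultiplicativity constant $C$ of Lemma \ref{lem:UniformSemiconformalBounds} must not be allowed to swamp the gain $\underline s_{\mathtt v}^{\,s}$; this is exactly what the renewal/generating-function formulation, as opposed to a crude block estimate, is designed to control.
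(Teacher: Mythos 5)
Your part (i) is correct and is essentially the paper's own argument: the same auxiliary block $\mathtt{v}$ with $\varphi_{\mathtt v}(E)\subset U$, the same sub-IFS $\{\varphi_{\iii\mathtt v}\}_{\iii\in I^m}$ satisfying the SSC because its pieces lie in the disjoint sets $\varphi_\iii(U)$, Falconer's lower bound, and Lemma \ref{lem:UniformSemiconformalBounds} to force $d_m\to P^{-1}(0)$ (the paper phrases this last step as a direct contradiction with $t<T$ rather than as a limit, but the content is identical). Likewise your reduction in part (ii) --- every point of $\varphi_\iii(E)\cap\varphi_\jjj(E)$ has at least one coding avoiding $\mathtt v$ as a factor, since two codings both containing $\mathtt v$ would place the point in $\varphi_{\iii\kkk_1}(U)\cap\varphi_{\jjj\mathtt{l}_1}(U)$ with $\iii\kkk_1\perp\jjj\mathtt{l}_1$ --- is exactly the paper's reduction, there phrased via the set $A=E_{\hhh}\cup\bigcup_{\kkk}E_{\kkk\hhh}$.

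The genuine gap is the step you yourself flag: the strict pressure drop $\tilde P(s)<0$ for the factor-avoiding system is only sketched, and the sketch does not close. Decomposing each word of length $n$ containing $\mathtt v$ by the position of the \emph{first} occurrence injects it into a triple $\mathtt{a}\mathtt{v}\mathtt{b}$ with $\mathtt a$ avoiding $\mathtt v$, which yields an inequality of the shape
\[
 S_n \le T_n + C'\,\underline{s}_{\mathtt v}^{\,s}\sum_{j=m}^{n} T_{j-m}S_{n-j},
\]
i.e.\ a \emph{lower} bound on a weighted sum of the $T_i$ --- the wrong direction for showing that $T_n$ decays geometrically; and under mere quasimultiplicativity the converse estimate (an exact renewal identity for generating functions) is unavailable, which is precisely the "delicate point" you mention but do not resolve. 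The paper sidesteps this entirely with a coarser containment: a word avoiding $\mathtt v$ as a factor anywhere in particular has all of its aligned length-$m$ blocks different from $\mathtt v$, so $\pi(\Sigma_{\mathtt v})$ is contained in the invariant set $F$ of the strict subsystem $\{\varphi_\kkk\}_{\kkk\in J_0}$, $J_0=I^m\setminus\{\mathtt v\}$, of the $m$-fold iterated IFS. The pressure drop for a strict sub-alphabet of a quasimultiplicative system is exactly \cite[Lemma 2.4]{KV2008}, applicable thanks to Lemma \ref{lem:UniformSemiconformalBounds}; combined with the identity $P_{I^m}^{-1}(0)=P^{-1}(0)$ (checked by rewriting the limit over $I^{mn}$) and Proposition \ref{prop:UpperBoundForDimm} applied to $F$, this gives $\dimh(\pi(\Sigma_{\mathtt v}))\le\dimm(F)\le P_{J_0}^{-1}(0)<P^{-1}(0)=\dimh(E)$. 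If you replace the renewal argument by this aligned-block reduction (or supply a full proof of a sub-alphabet pressure-drop lemma), your proof of (ii) is complete.
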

\begin{proof}
  Assume that $U$ is an open set given by the SOSC. Then there exist $x\in U \cap E$ and $\hhh \in I^*$ such that
$x \in E_\hhh \subset U$.

(i) We follow the proof of \cite[Theorem 2.6]{S1996} with appropriate modifications.  Let $k\in \N$. Since the sets $\varphi_{\iii\hhh}(E) \subset \varphi_\iii(U)$ and $\varphi_{\iii'\hhh}(E) \subset \varphi_{\iii'}(U)$ are disjoint for distinct $\iii,\iii' \in I^k$, the IFS $\mathcal{F}_k := \{ \varphi_{\iii\hhh} : \iii \in I^k \}$ satisfies the SSC. Let $F_k$ be the invariant set for $\mathcal{F}_k$ and let $d_k$ be the unique positive number that satisfies
\[
  \sum_{\iii \in I^k} \underline{s}_{\iii\hhh}^{\,d_k} = 1
\]
where the constants $\underline{s}_\iii$, $\iii \in I^*$, are from the definition of semiconformality.
By Proposition \ref{prop:LowerDimBoundFromSSC} we have $\dimh(F_k) \ge d_k$. On the other hand, Lemma \ref{prop:UpperBoundForDimm} gives $\dimh(E) \le \dimm(E) \le P^{-1}(0)$ and clearly $F_k \subset E$, so
\[
  d_k \le \dimh(F_k) \le \dimh(E) \le P^{-1}(0).
\]
Set $t = \dimh(E)$ and $T = P^{-1}(0)$.  The proof of (i) is now completed by showing that we cannot have $t < T$. Apply Lemma \ref{lem:UniformSemiconformalBounds} to get a constant $C \ge 1$ such that
$\underline{s}_{\iii\hhh} \ge C^{-1} \underline{s}_{\iii}\underline{s}_{\hhh}$ for each $\iii \in I^k$ and
\[
   C^{-T} \le \sum_{\iii \in I^k} \underline{s}_\iii^T \le C^T.
\]
Now since $0 < \underline{s}_\iii < 1$ for each $\iii \in I^*$ and $d_k \le t$ for each $k\in \N$, we have
\[
  1 = \sum_{\iii \in I^k} \underline{s}_{\iii\hhh}^{\,d_k} \ge C^{-d_k} \underline{s}_{\hhh}^{\,d_k} \sum_{\iii \in I^k} \underline{s}_{\iii}^{\,d_k} \ge C^{-t} \underline{s}_{\hhh}^{\,d_k} \sum_{\iii \in I^k} \underline{s}_{\iii}^{t},
\]
so by assuming $t<T$ we would get
\begin{align*}
  \underline{s}_\hhh^{-t} &\ge \underline{s}_\hhh^{-d_k} \ge C^{-t} \sum_{\iii \in I^k} \underline{s}_{\iii}^{t} = C^{-t} \sum_{\iii \in I^k} \underline{s}_{\iii}^{T} \underline{s}_{\iii}^{t-T} \\
  &\ge C^{-(t+T)} (\max\nolimits_{\iii \in I^k} \underline{s}_{\iii})^{t-T}
\end{align*}
for any $k\in \N$. However, this contradicts the observation that by Lemma \ref{lem:diamBoundExponential} we have $\lim_{k\to\infty}(\max\nolimits_{\iii \in I^k}\underline{s}_{\iii})^{t-T} = \infty$  if $t<T$. Thus $t = T$.

(ii) Here we essentially reproduce the proof of \cite[Proposition 4.9]{KV2008}. %Letting $U$ be a nonempty open set feasible for the SOSC, we can take $\hhh \in I^*$ so that $E_\hhh \subset U$ (see the proof of Proposition \ref{prop:SOSCdim} above) and then
It is easy to see that the set
\[
  A := E_\hhh \cup \bigcup_{\kkk \in I^*} E_{\kkk\hhh}
\]
satisfies $\varphi_\iii(A) \cap \varphi_\jjj(A) = \emptyset$ whenever $\iii \perp \jjj$. Therefore
\[
  E_\iii \cap E_\jjj \subset \varphi_\iii(E\setminus A) \cup \varphi_\jjj(E\setminus A)
\]
whenever $\iii \perp \jjj$. The bi-Lipschitz mappings $\varphi_\iii$, $\iii \in I^*$, preserve the dimension, so it is now enough to show that $\dimh(E\setminus A) < \dimh(E)$.

Let $F$ be the invariant set of the semiconformal IFS %$\mathcal{F} := \{\varphi_\iii : \iii \in I^{|h|}, \iii \ne \hhh\}$.
$\{ \varphi_\kkk : \kkk \in J_0 \}$ where $J_0 = I^{|h|}\setminus\{\hhh\}$. It is evident that $E\setminus A \subset F$.
Set $m = |\hhh|$, let $J = I^m$, let $P_J$ and $P_{J_0}$ be the pressures of $\{ \varphi_\kkk \}_{\kkk \in J}$ and $\{ \varphi_\kkk \}_{\kkk \in J_0}$, respectively, and let $u = P_J^{-1}(0)$. Recalling that by Lemma \ref{lem:UniformSemiconformalBounds} we have a constant $C \ge 1$ such that $C^{-1} \underline{s}_\iii \underline{s}_\jjj \le \underline{s}_{\iii\jjj} \le C \underline{s}_\iii \underline{s}_\jjj$ for all $\iii,\jjj \in I^*$ and it further holds that $\max_{\iii\in I^n} \underline{s}_\iii \to 0$ as $n \to \infty$, we can apply \cite[Lemma 2.4]{KV2008} to infer that $P_{J_0}(u) < 0$. Thus $P_{J_0}^{-1}(0) < P_J^{-1}(0)$ by Lemma 2.5. On the other hand,
\begin{align*}
   0 = \frac 1m P_J(u) &= \lim_{n \to \infty} \frac 1{mn} \log \sum_{\kkk \in J^n} \underline{s}_\kkk^u
                    = \lim_{n \to \infty} \frac 1{mn} \log \sum_{\iii \in I^{mn}} \underline{s}_\iii^u
                    = P(u)
\end{align*}
which shows that $P_J^{-1}(0) = u = P^{-1}(0) = \dimh(E)$. By Proposition \ref{prop:UpperBoundForDimm}
we now have
\[
\dimh(E\setminus A) \le \dimm(F) \le P_{J_0}^{-1}(0) < P_{J}^{-1}(0) = \dimh(E)
\]
and the proof is complete.
\end{proof}

%\begin{note}
% Markku will add some text here.
%\end{note}
We end this section by uncovering  a natural topological prerequisite for the validity of the dimension formula $\dimh(E) = P^{-1}(0)$ when $E$ is the invariant set of a semiconformal IFS. The result shows, in particular, that in the semiconformal setting the overlap between the parts $\varphi_i(E)$, $i \in I$, is  insignificant also in the topological sense if the SOSC holds.

\begin{proposition}\label{lem:NowhereDenseIntersection}
  Let $\{ \varphi_i \}_{i\in I}$ be a semiconformal IFS with pressure $P$ and invariant set $E$ such that $\dimh(E) = P^{-1}(0)$. Then $\varphi_\iii(E) \cap \varphi_\jjj(E)$ is nowhere dense in $E$ whenever $\iii \perp \jjj$.
\end{proposition}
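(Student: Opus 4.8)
The plan is to argue by contradiction: suppose $\varphi_\iii(E) \cap \varphi_\jjj(E)$ fails to be nowhere dense in $E$ for some $\iii \perp \jjj$. Since this intersection is closed (being the intersection of two compact sets), failing to be nowhere dense means it has nonempty interior relative to $E$. The key idea is that such a ``fat'' overlap should force too many symbols to project near the overlap region, which in turn would push the pressure zero above the Hausdorff dimension, contradicting the hypothesis $\dimh(E) = P^{-1}(0)$. So the strategy is to extract from the relatively open overlap a fixed cylinder image $\varphi_\hhh(E)$, $\hhh \in I^*$, that is entirely contained in $\varphi_\iii(E) \cap \varphi_\jjj(E)$; this is the semiconformal analogue of finding a self-similar piece sitting inside the intersection.

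First I would fix $\iii \perp \jjj$ with $\varphi_\iii(E) \cap \varphi_\jjj(E)$ not nowhere dense, so there is a point $x_0 \in E$ and a radius $\rho > 0$ with $B(x_0,\rho) \cap E \subset \varphi_\iii(E) \cap \varphi_\jjj(E)$. Next, using that $E = \bigcup_{\kkk \in I^n} \varphi_\kkk(E)$ for every $n$ together with Lemma \ref{lem:diamBoundExponential} (so that $\diam(\varphi_\kkk(E)) \le c\roo^{|\kkk|} \to 0$), I would choose a word $\hhh \in I^*$ with $x_0 \in \varphi_\hhh(E)$ and $\diam(\varphi_\hhh(E)) < \rho$, which gives
\[
 \varphi_\hhh(E) \subset B(x_0,\rho) \cap E \subset \varphi_\iii(E) \cap \varphi_\jjj(E).
\]
In particular $\varphi_\hhh(E) \subset \varphi_\iii(E)$ and $\varphi_\hhh(E) \subset \varphi_\jjj(E)$ with $\iii \perp \jjj$, so the same compact piece is reached along two incomparable branches.

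From here the plan is to iterate this inclusion to manufacture many distinct long words all mapping $E$ into the same target, thereby overcounting the cover of $E$ relative to what the pressure allows. Concretely, I would set $T = P^{-1}(0) = \dimh(E)$ and compare the sum $\sum_{\kkk \in I^n}\underline{s}_\kkk^T$, which by Lemma \ref{lem:UniformSemiconformalBounds} stays comparable to $e^{nP(T)} = 1$, against a competing sum built from the duplicated branches: whenever a word passes through $\hhh$ it can be rerouted through either $\iii$ or $\jjj$ with comparable contraction factors (by the multiplicativity estimate $C^{-1}\underline{s}_\iii\underline{s}_\jjj \le \underline{s}_{\iii\jjj}\le C\underline{s}_\iii\underline{s}_\jjj$ of Lemma \ref{lem:UniformSemiconformalBounds}), producing a genuine doubling of the symbolic weight at each return to $\hhh$. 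Extracting from $\varphi_\hhh(E) \subset \varphi_\iii(E)$ a recurrence of $\hhh$ inside longer $\iii$-words, one obtains a subsystem whose pressure at the exponent $T$ is strictly positive, forcing $P^{-1}(0) > T = \dimh(E)$, the desired contradiction.

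The main obstacle I expect is making the ``rerouting doubles the weight'' heuristic into a clean estimate: I have $\varphi_\hhh(E)$ sitting inside $\varphi_\iii(E)$, so there is a word $\kkk$ with $\varphi_{\iii\kkk}(E) = \varphi_\hhh(E)$ only in the self-similar, bijective case; in the general semiconformal, non-injective case the inclusion $\varphi_\hhh(E) \subset \varphi_\iii(E)$ need not come from a factorization of symbols, and the two ``copies'' of $\varphi_\hhh(E)$ may coincide as sets without the underlying words being distinguishable. The delicate point is therefore to pass from the \emph{set} inclusion to an inequality between \emph{symbolic} pressure sums, which is exactly where Lemma \ref{lem:UniformSemiconformalBounds} and the comparability \eqref{eq:semiequal} between $\diam(\varphi_\kkk(E))$ and $\underline{s}_\kkk$ must be leveraged carefully, perhaps by covering $\varphi_\hhh(E)$ using the construction pieces of both branches at a common scale and counting multiplicities.
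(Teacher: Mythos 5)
Your first step is exactly right and matches the paper: from the assumption that the intersection is not nowhere dense you extract a ball $B(x_0,\rho)$ with $B(x_0,\rho)\cap E\subset\varphi_\iii(E)\cap\varphi_\jjj(E)$ and then, using $E=\bigcup_{\kkk\in I^m}\varphi_\kkk(E)$ and the decay of diameters, a word $\hhh|_m$ with $\varphi_{\hhh|_m}(E)\subset B(x_0,\rho)\cap E$. But the second half of your plan has a genuine gap, and it is the one you yourself flag: there is no way to turn the set inclusion $\varphi_{\hhh|_m}(E)\subset\varphi_\iii(E)$ into a symbolic factorization or a ``rerouted'' word, so the proposed doubling of the weight $\sum_\kkk \underline{s}_\kkk^T$ never materializes. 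Worse, the direction of the intended contradiction is off: the pressure $P$ of the full system is a fixed quantity with $P(T)=0$ by definition of $T=P^{-1}(0)$, so no amount of observing that two branches land on the same \emph{set} can force $P(T)>0$ or $P^{-1}(0)>T$; set-level coincidences simply do not enter the definition of $P$.

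The missing idea is that the overlap should be exploited by \emph{removing} a redundant word, not by duplicating one. Choose $\hhh\in I^\infty$ with $\pi(\hhh)=x_0$ and $\hhh$ starting with $\iii$ (possible since $x_0\in\varphi_\iii(E)=\pi([\iii])$), and take $m>|\jjj|$ large enough that $\varphi_{\hhh|_m}(E)\subset B(x_0,\rho)\cap E\subset\varphi_\jjj(E)=\bigcup_{\kkk\in I^{m-|\jjj|}}\varphi_{\jjj\kkk}(E)$. Since $\hhh|_m$ starts with $\iii$ and $\iii\perp\jjj$, the piece $\varphi_{\hhh|_m}(E)$ is covered by the \emph{other} level-$m$ pieces, so $E=\bigcup_{\kkk\in I^m,\,\kkk\ne\hhh|_m}\varphi_\kkk(E)$; that is, $E$ is also the invariant set of the strictly smaller semiconformal IFS indexed by $J_0=I^m\setminus\{\hhh|_m\}$. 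Dropping one word strictly decreases the zero of the pressure (this is the pressure-drop argument carried out in the proof of Proposition \ref{prop:SOSCdim}(ii) via Lemma \ref{lem:UniformSemiconformalBounds} and \cite[Lemma 2.4]{KV2008}), so $P_{J_0}^{-1}(0)<P^{-1}(0)$, while Proposition \ref{prop:UpperBoundForDimm} applied to the $J_0$-system gives $\dimh(E)\le\dimm(E)\le P_{J_0}^{-1}(0)$. Combining these with the hypothesis $\dimh(E)=P^{-1}(0)$ yields the contradiction. Note that this mechanism never needs to identify \emph{which} words of the $\jjj$-branch cover $\varphi_{\hhh|_m}(E)$, which is precisely what sidesteps the non-injectivity obstacle you ran into.
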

\begin{proof}
  Assume that $\iii \perp \jjj$. It is to be proved that there are no balls $B(x,r)$ with $x \in \varphi_\iii(E) \cap \varphi_\jjj(E) $ and $r>0$ such that $B(x,r) \cap E \subset \varphi_\iii(E) \cap \varphi_\jjj(E)$. Assume, to the contrary, that such a ball $B(x,r)$ exists. Then $x = \pi(\hhh)$ for some $\hhh \in I^\infty$ starting with $\iii$. Now by taking a sufficiently large $m \in \N$ we get $m > |\jjj|$ and
\[
  \varphi_{\hhh|_m}(E) \subset B(x,r) \subset \varphi_\jjj(E) = \bigcup_{\iii \in I^{m-|\jjj|}} \varphi_{\jjj\iii}(E),
\]
from which we infer that
\[
  E = \bigcup_{\substack{\kkk \in I^m\\ \kkk \ne \hhh|_m}} \varphi_{\kkk}(E).
\]
Thus $E$ is also the invariant set of the semiconformal IFS $\mathcal{F} := \{ \varphi_\kkk : \kkk \in J_0 \}$ where $J_0 = I^m\setminus\{\hhh|_m\}$. Denoting the pressure of $\mathcal{F}$ by $P_{J_0}$, we should now have $\dimh(E) \le P_{J_0}^{-1}(0)$. However, as we showed in the proof of the second part of Proposition \ref{prop:SOSCdim}, $P_{J_0}^{-1}(0)$ is strictly smaller than
$\dimh(E)$. This contradiction finishes the proof.
\end{proof}

%%%%%%%%%%%%%%%%%%%%%%%%%%%%%%%%%%%%%%%%%%%%%%%%%%%%%%%%%%%%%%%%%%%%%%%%%%%%%%%%%%%%%%%%%%%%%%%%%%%%%%%%%%%%%%%%%%%%%

\section{Sub-constructions}\label{section:sub}

In sections 2 and 3 we studied the generalization of controlled Moran constructions in the direction of weakly controlled Moran constructions. This meant, in particular, that we used the whole space of words $I^\infty$ and relaxed the requirement on the compact sets by replacing the condition \ref{cond:M2} with conditions \ref{cond:cover} and \ref{cond:doub}. There is another natural way to generalize controlled Moran constructions. That is to consider suitable sub\-sets of $I^\infty$.

It is clear that the projection of an arbitrary subset of $I^\infty$ can be geometrically extremely bad. For our purpose we impose a very strict condition on these subsets. This will give a simple way of constructing sets of desired Hausdorff dimension. The example in the Carnot groups we present at the end was the motivation for the following definition.

Suppose we have a compact set $J \subset I^\infty$ and a collection $\{X_\iii \subset M : \iii \in J_*\}$ of compact sets with positive diameter. We write $J_n := \{\iii \in I^n : [\iii] \cap J \ne \emptyset\}$ and $J_* := \bigcup_{n=1}^\infty J_n$. The collection $\{X_\iii : \iii \in J_*\}$ is to be called a \emph{$t$-controlled Moran sub-construction ($t$-CMSC)}, with $t>0$, provided that conditions \ref{cond:incl} and \ref{cond:decr} are satisfied and the following holds:
There exists a constant $C > 0$ so that for every $\iii \in J_*$ and $n \in \N$
\begin{equation}\label{cond:regular}
 C^{-1} \diam(X_\iii)^t < \sum_{\substack{\jjj \in I^n\\ \iii\jjj\in J_*}}\diam(X_{\iii\jjj})^t < C  \diam(X_\iii)^t.
\end{equation}
The set $E = \pi(J)$ is then called the limit set of the CMSC. Notice the relation between the condition \eqref{cond:regular} and the condition \ref{cond:M2} in the definition of a CMC.

\begin{example}
Let us consider sub-constructions of a $\frac{1}{3}$-Cantor set on the real line. Take $I = \{1,2\}$, $f_1(x) = x/3$ and $f_2(x) = x/3+2/3$, and define $X_\iii = f_\iii([0,1])$. The standard $\frac{1}{3}$-Cantor set $C_{1/3}$ is then the limit set of the CMC $\{X_\iii : \iii \in I^*\}$. For it we have $0 < \HH^{s}(C_{1/3}) < \infty$ with $s = \frac{\log 2}{\log 3}$. Now for any $0 < t < s$ we can make a $t$-CMSC for example in the following way:

Let $j_1 = 2$. For $i \ge 1$ define $j_{i+1} = 1$ if $(\prod_{l=1}^{i}j_l)3^{-tl}>1$, and $j_{i+1} = 2$ otherwise. Let $J = \{1, \dots, j_1\} \times \{1, \dots, j_2\} \times \cdots.$ Now for every $\iii \in J_*$ and $n \in \N$
\[
 \sum_{\substack{\jjj \in I^n\\ \iii\jjj\in J_*}}\diam(X_{\iii\jjj})^t = 3^{-tl}\prod_{l=1}^{n}j_{|\iii|+l} \in \left[\frac{1}{4}\diam(X_\iii)^t, 4\diam(X_\iii)^t\right],
\]
and so $\{X_\iii : \iii \in J_*\}$ is a $t$-CMSC.

\end{example}

\begin{proposition}\label{prop:subconstr}
Suppose that we have a $t$-CMSC. Then $\HH^t(E) < \infty$. If the CMSC satisfies the finite clustering property, then $\HH^t(E) > 0$.
\end{proposition}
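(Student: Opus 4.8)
The plan is to treat the two assertions separately. The finiteness of $\HH^t(E)$ will come from the upper bound in \eqref{cond:regular} together with a covering argument, while the positivity will follow the template of the proof of Proposition \ref{prop:fctoposfinite}; the genuinely new ingredient is a replacement for Lemma \ref{lemma:1} that manufactures a measure on $I^\infty$ out of the two-sided estimate \eqref{cond:regular}.

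Write $\psi(\iii) := \diam(X_\iii)^t$ and $S_n := \sum_{\iii\in J_n}\psi(\iii)$. First I would note that the sums $S_n$ are uniformly bounded: every $\kkk\in J_{n+1}$ has $\kkk|_1\in J_1$, so grouping the words of $J_{n+1}$ by this prefix and applying the upper bound in \eqref{cond:regular} to each $i\in J_1$ gives
\[
 S_{n+1} = \sum_{i\in J_1}\sum_{\substack{\jjj\in I^n\\ i\jjj\in J_*}}\psi(i\jjj) < \sum_{i\in J_1} C\,\psi(i) = C\,S_1,
\]
with $S_1<\infty$ since $I$ is finite. I would also record that the diameters decay uniformly, $\max_{\iii\in J_n}\diam(X_\iii)\to 0$: by \ref{cond:incl} the maps $\omega\mapsto\diam(X_{\omega|_n})$ form a decreasing sequence of continuous (locally constant) functions on the compact set $J$ converging pointwise to $0$ (as $\pi$ is well defined), so Dini's theorem upgrades this to uniform convergence. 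Since $E\subset\bigcup_{\iii\in J_n}X_\iii$ for every $n$, the sets $X_\iii$, $\iii\in J_n$, furnish covers of $E$ of arbitrarily small mesh, whence $\HH^t(E)\le C S_1<\infty$, proving the first assertion.

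For the positivity the key step --- and the main obstacle --- is to show $M^t(J)>0$, where $M^t=M^\psi$ is the Carath\'eodory measure on $I^\infty$ built from $\psi$, extended by $\psi(\iii)=0$ when $\iii\notin J_*$. I would prove that \emph{every} finite cover of $J$ by cylinders has $\psi$-sum exceeding $C^{-2}S_1$; by compactness of $J$ this bounds $M_n^\psi(J)$ from below uniformly in $n$, giving $M^\psi(J)\ge C^{-2}S_1>0$. Passing to the $\subseteq$-maximal cylinders (which only lowers the sum and still covers $J$) and discarding cylinders disjoint from $J$, one may assume the cover is a finite antichain $\mathcal{C}\subset J_*$. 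Splitting $\mathcal{C}=\bigsqcup_{i\in J_1}\mathcal{C}_i$ by first symbol, it suffices to show $\sum_{\iii\in\mathcal{C}_i}\psi(\iii)>C^{-2}\psi(i)$ for each $i$ and sum over $i$. With $N_i$ the largest length in $\mathcal{C}_i$, the crux is that $\mathcal{C}_i$ being an antichain covering $[i]\cap J$ forces \emph{every} word $i\kkk\in J_*$ with $|\kkk|=N_i$ to have a prefix in $\mathcal{C}_i$, so the full level-$N_i$ descendant sum below $i$ is apportioned among the descendant sums below the members of $\mathcal{C}_i$. Comparing the lower bound of \eqref{cond:regular} applied at $i$ with its upper bounds applied at the members $i\jjj\in\mathcal{C}_i$ yields
\[
 C^{-1}\psi(i) < \sum_{\substack{\kkk\in I^{N_i}\\ i\kkk\in J_*}}\psi(i\kkk) \le \sum_{i\jjj\in\mathcal{C}_i}\ \sum_{\substack{\hhh\in I^{N_i-|\jjj|}\\ i\jjj\hhh\in J_*}}\psi(i\jjj\hhh) < C\sum_{\iii\in\mathcal{C}_i}\psi(\iii),
\]
which is exactly the desired estimate. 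This ``no mass is lost going downward'' bookkeeping, where the two-sidedness of \eqref{cond:regular} is indispensable, is the delicate part of the whole argument.

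With $M^t(J)>0$ secured, Proposition \ref{prop:Frostman} applied to $A=J$ provides an integer $n_0$ and a Borel measure $\mu$ on $I^\infty$ with $0<\mu(J)<\infty$ and $\mu([\iii])\le\diam(X_\iii)^t$ whenever $|\iii|\ge n_0$; since $\psi$ vanishes off $J_*$, the measure $\mu$ is carried by $J$. From here I would reproduce the finite clustering argument of Proposition \ref{prop:fctoposfinite} essentially verbatim: putting $K=\sup_{x\in E}\limsup_{r\downarrow0}\#Z(x,r)$ and $E_k=\{x\in E:\#Z(x,r)\le K\text{ for }0<r<1/k\}$, for any small-mesh cover $\{A_i\}$ of $E_k$ (with $x_i\in A_i\cap E_k$) the uniform decay guarantees $|\iii|\ge n_0$ for the relevant words, so that
\[
 \mu(\pi^{-1}(E_k)) \le \sum_i\ \sum_{\iii\in Z(x_i,\diam A_i)}\mu([\iii]) \le K\sum_i \diam(A_i)^t,
\]
whence $\mu(\pi^{-1}(E_k))\le K\,\HH^t(E)$. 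Letting $k\to\infty$ and using $J\subset\pi^{-1}(E)$ gives $0<\mu(J)\le\mu(\pi^{-1}(E))\le K\,\HH^t(E)$, so $\HH^t(E)>0$, which completes the proof.
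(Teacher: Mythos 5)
Your proof is correct and follows essentially the same route as the paper's: the finiteness claim is the same covering argument with the level sets $\{X_\iii : \iii \in J_n\}$, and the positivity claim rests on exactly the paper's key step, namely lower-bounding $\sum_{\iii\in Q}\diam(X_\iii)^t$ for a finite disjoint cylinder cover $Q$ of the compact set $J$ by $C^{-2}\sum_{\jjj\in J_1}\diam(X_\jjj)^t$ through a two-fold application of \eqref{cond:regular} at a common deep level, before invoking Proposition \ref{prop:Frostman} and the finite clustering argument of Proposition \ref{prop:fctoposfinite}. Your per-first-symbol bookkeeping and the Dini argument for the uniform decay of the mesh are just slightly more explicit versions of what the paper leaves implicit.
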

\begin{proof}
The first claim follows immediately by noticing from \ref{cond:decr} that we can use $\{X_\iii$, $\iii \in J_n\}$ as a cover when estimating the Hausdorff measure of $E$.

Let us prove the second claim. For this it is enough to prove that $M^t(J) > 0$. The rest will follow as in the proof of Proposition \ref{prop:fctoposfinite}. Because $J$ is compact it is enough to look at finite covers. Let $Q \subset J_*$ be finite so that $J \subset \bigcup_{\iii \in Q} [\iii]$ and $[\iii] \cap [\jjj] = \emptyset$ for $\iii, \jjj \in Q$ with $\iii \ne \jjj$. Define $m = \max\{|\iii| : \iii \in Q\}$. Now from the condition \eqref{cond:regular} we get
\begin{align*}
 \sum_{\iii \in Q}\diam(X_\iii)^t & \ge C^{-1}\sum_{\iii \in Q} \sum_{\iii\jjj \in J_{m+1}} \diam(X_{\iii\jjj})^t\\
& = C^{-1} \sum_{\jjj \in J_{m+1}} \diam(X_\jjj)^t \ge C^{-2} \sum_{\jjj \in J_{1}} \diam(X_\jjj)^t
\end{align*}
giving the claim.
\end{proof}

\subsection{An example in Carnot groups}

In \cite{BTW2009} Z. Balogh, J. Tyson and B. Warhurst studied Hausdorff dimensions of sets in Carnot groups. They gave the following comprehensive answer to what the Hausdorff dimensions can be with respect to Carnot-Carath\'eodory and Euclidean metrics.

\begin{theorem}\cite[Theorem 2.4]{BTW2009}\label{thm:BTW}
 In any Carnot group $\G$, we have
\[
 \beta_-(\dim_ES) \le \dim_{cc}S \le \beta_+(\dim_ES)
\]
for every $S \subset \G$.
\end{theorem}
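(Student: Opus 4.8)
The plan is to reduce the statement to a purely \emph{anisotropic} dimension comparison via the ball--box theorem, and then to carry out the two inequalities by a covering argument and a mass distribution (Frostman) argument respectively.

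First I would fix a stratification $\mathfrak{g} = V_1 \oplus \cdots \oplus V_s$ of the Lie algebra, write $m_i = \dim V_i$, and recall the topological and homogeneous dimensions $n = \sum_i m_i$ and $Q = \sum_i i\,m_i$. In exponential coordinates the Carnot dilations act diagonally, multiplying the $V_i$--coordinates by $t^i$, and the ball--box theorem provides constants so that each Carnot--Carath\'eodory ball $B_{cc}(x,r)$ is comparable to the axis--parallel box having side $r^i$ in the $m_i$ directions of layer $i$. Writing $\roo$ for the associated homogeneous quasidistance (weight $i$ with multiplicity $m_i$), this comparability of balls gives comparability of diameters, so $\dim_{cc}S = \dim_\roo S$ for every $S$, and I may work with $\roo$ in place of $d_{cc}$. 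I would then define $\beta_-$ and $\beta_+$ as the value functions of the linear programs that distribute a given Euclidean dimension $\alpha$ among the layers: $\beta_-(\alpha) = \min \sum_i i\,a_i$ and $\beta_+(\alpha) = \max \sum_i i\,a_i$ subject to $0 \le a_i \le m_i$ and $\sum_i a_i = \alpha$. Their optima are attained by greedily filling the lowest (resp.\ highest) weight layers first, so $\beta_-$ is the increasing convex and $\beta_+$ the increasing concave piecewise-linear bijection $[0,n]\to[0,Q]$. Since both are increasing bijections, the asserted bounds may be read equivalently as bounds on $\dim_E S$, and by countable stability of Hausdorff dimension together with left invariance it suffices to argue at a single small scale inside one coordinate chart.

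The geometric heart is an exchange estimate between Euclidean balls and $\roo$--boxes. A $\roo$--ball of radius $\rho$ is a box with sides $\rho^i$, so covering it by Euclidean balls of radius $\sigma$, or tiling a Euclidean ball of radius $d$ by $\roo$--boxes of radius $\rho$, each costs a product $\prod_i(\cdot)^{m_i}$ whose exponent I optimize over the free scale. The naive single-scale optimization already yields the crude H\"older bounds $\dim_E S \le \dim_{cc}S \le s\,\dim_E S$; to upgrade these to the sharp $\beta_\pm$ one must let the \emph{aspect ratio} of the covering boxes vary from layer to layer and choose it according to how much of the set sits in each layer at each scale. For the upper bound $\dim_{cc}S \le \beta_+(\dim_E S)$ I would start from an efficient Euclidean cover, subdivide each piece into adapted boxes, and show by the layer-filling optimization that the resulting $\roo$--cost exponent never exceeds $\beta_+(\dim_E S)$. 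For the lower bound $\dim_{cc}S \ge \beta_-(\dim_E S)$ I would instead run a mass distribution argument: take a Frostman measure $\mu$ on $S$ adapted to $\roo$ witnessing a lower $cc$--dimension, estimate the Euclidean ball growth $\mu(B_E(x,\sigma))$ through the box comparison, and read off a Euclidean Frostman exponent, the extremal case being the lowest-weight filling that defines $\beta_-$.

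The main obstacle is precisely the passage from the crude H\"older exponents to the sharp extremal functions for \emph{arbitrary} sets. The extremal distributions defining $\beta_\pm$ are realized by sets that split as products across the layers, whereas a general $S$ need not respect the layer decomposition at any fixed scale. Controlling this requires a multiscale (stopping-time) bookkeeping that, at each dyadic scale, records the effective per-layer dimension profile and shows that the worst admissible profile governs the global exponent; one must also track the quasimetric triangle constant coming from ball--box and the uniformity of the comparison over the chart. I expect this layered, scale-by-scale optimization --- rather than the covering or Frostman mechanics themselves --- to be the technically demanding part.
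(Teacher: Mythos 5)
First, note that the paper does not prove this statement at all: it is quoted verbatim from Balogh--Tyson--Warhurst \cite[Theorem 2.4]{BTW2009} and used as a black box in Section \ref{section:sub}, so there is no internal proof to compare against; the relevant comparison is with the argument in \cite{BTW2009}. Your sketch assembles the right raw ingredients (ball--box comparison, the identification of $\beta_\pm$ as the value functions of the layer-filling linear programs, covering counts of the form $\prod_j(\cdot)^{m_j}$), but it has a genuine gap precisely where you yourself flag the difficulty. The actual proof needs no multiscale stopping-time bookkeeping and no ``per-layer dimension profile'' of the set: both inequalities follow from a single-scale comparison of Hausdorff premeasures valid for \emph{arbitrary} $S$. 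Concretely, a set of $cc$-diameter $r$ sits in a box with sides $r^j$ in layer $j$ and is covered by roughly $\prod_{j\le l}r^{(j-l-1)m_j}$ Euclidean balls of radius $r^{l+1}$; choosing $l=l_-(\alpha)$ gives $\HH_E^\alpha(S)\le C\,\HH_{cc}^{\beta_-(\alpha)}(S)$ for every $S$ and every $\alpha$, and letting $\alpha\uparrow\dim_E S$ yields $\dim_{cc}S\ge\beta_-(\dim_E S)$; the $\beta_+$ bound is the symmetric covering count of a Euclidean ball by $cc$-balls. The piecewise-linear sharpness of $\beta_\pm$ comes from optimizing the single free parameter (the ratio of the two radii), not from varying the aspect ratio of the covering boxes, which is fixed at $r^j$ by the dilation structure. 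Your proposal defers exactly this step to an unexecuted ``technically demanding part,'' so as written it is not a proof.

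The second concrete problem is the Frostman half. The theorem is asserted for \emph{every} $S\subset\G$, while Frostman's lemma produces a measure only on Souslin (e.g.\ Borel or compact) sets of positive Hausdorff measure, so a mass-distribution argument cannot deliver the lower bound in the stated generality; the covering comparison above does, which is why \cite{BTW2009} uses it for both directions. Moreover, as written your Frostman step runs in the wrong direction: to prove $\dim_{cc}S\ge\beta_-(\dim_E S)$ one must pass from Euclidean information to $cc$ information, so the Frostman measure would have to witness the Euclidean dimension and one would then estimate $\mu(B_{cc}(x,\rho))$, not the reverse. Starting from a measure ``witnessing a lower $cc$-dimension'' presupposes the conclusion.
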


Here $\beta_-$ and $\beta_+$ are the lower and upper dimension comparison functions for $\G$, which will be defined later. The sharpness of the first inequality in Theorem \ref{thm:BTW} was established by using a set of self-similar examples, see \cite[Theorem 4.8]{BTW2009}. The answer was not completely satisfying as the construction worked only for a dense set of dimensions and only for those dimensions gave the answer on the level of positive and finite measures. We will construct the missing compact sets by combining two constructions of the type used in \cite[Proposition 4.14]{BTW2009}. Formally the modification on their construction is a replacement of self-similar construction with a CMSC. Some of the calculations will be omitted and they can be found from \cite{BTW2009}.

We will use the notation from \cite{BTW2009}, but for the convenience we shall recall here some of it. Let $(\G, *)$ be a step $s$ Carnot group with stratified Lie algebra $\mathfrak{g} = \mathfrak{v}_1 \oplus \cdots \oplus \mathfrak{v}_s$, where $[\mathfrak{v}_1, \mathfrak{v}_j]= \mathfrak{v}_{j+1}$ for $j = 1, \dots, s-1$ and $[\mathfrak{v}_1, \mathfrak{v}_s]= 0$. Denote $m_j = \dim \mathfrak{v}_j$. The dilations $\delta_r$ of $\mathfrak{g}$ for $r > 0$ are given by
\[
 \delta_r\left(\sum_{j=1}^s U_j\right) = \sum_{j=1}^s r^jU_j
\]
with $U_j \in \mathfrak{v}_j$. The corresponding dilations on $\G$ are also denoted $\delta_r$.

We use the exponential coordinates in $\G$ which are formed using the exponential map $\exp \colon\mathfrak{g} \to \G$ and a graded orthonormal basis $\{E_{jk} : j= 1, \dots, s; k = 1, \dots, m_j\}$ of $\mathfrak{g}$ by identifying a point $(x_1, \dots, x_s)\in \R^{m_1} \times \cdots \times \R^{m_s}$ with 
\[
\exp\left(\sum_{j=1}^s\sum_{k=1}^{m_j}\langle x_j,e_{jk}\rangle E_{jk}\right),
\]
where $\{e_{jk}\}_{k=1}^{m_j}$ is the standard orthonormal basis of $\R^{m_j}$. With these coordinates we can view the space $\R^{\sum_{j=1}^sm_j}$ with appropriate group operation as our group $\G$. The projections $\pi_j : \G \to \R^{m_j}$ are given by the exponential coordinates as $\pi_j(x_1, \dots, x_s) = x_j$. We also write $\Pi_l = \pi_1 \times \cdots \times \pi_l \colon \G \to \R^{\sum_{j=1}^lm_j}$.

Denote by $d_{cc}$ the Carnot-Carath\'{e}odory metric (see for example \cite{BTW2009} for a definition) and by $d_e$ the Euclidean metric. Instead of the metric $d_{cc}$ we could use any sub-Riemannian metric on the group $\G$ which is left invariant and compatible with the dilations.

Define the \emph{lower dimension comparison function of $\G$} as
\[
 \beta_-(\alpha) = \sum_{j=1}^{l_-}jm_j + (1+ l_-)\left(\alpha - \sum_{j=1}^{l_-}m_j\right)
\]
for $\alpha \in \left]0, \sum_{j=1}^sm_j\right]$, with $l_- \in \{0, \dots, s-1\}$ so that
\[
 \sum_{j=1}^{l_-}m_j < \alpha \le \sum_{j=1}^{1+l_-}m_j.
\]
The \emph{upper dimension comparison function for $\G$} is defined as
\[
 \beta_+(\alpha) = \sum_{j=l_+}^{s}jm_j + (-1+ l_+)\left(\alpha - \sum_{j=l_+}^{s}m_j\right)
\]
for $\alpha \in \left]0, \sum_{j=1}^sm_j\right]$ with $l_- \in \{0, \dots, s-1\}$ so that
\[
 \sum_{j=l_+}^{s}m_j < \alpha \le \sum_{j=-1+l_+}^{s}m_j.
\]

Now we are ready to start with the construction which answers the Remarks 4.9 and 4.10 in \cite{BTW2009}.
\begin{theorem}
Let $\G$ be a Carnot group. Then for every $\alpha \in \left]0, \dim_e \G\right]$ there exists a compact set $K \subset \G$ with
\[
 0 < \HH_e^\alpha(K) \qquad \text{and} \qquad \HH_{cc}^{\beta_-(\alpha)}(K) < \infty.
\]
\end{theorem}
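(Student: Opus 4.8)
The plan is to build a single controlled Moran sub-construction in $\G$ whose pieces are nested dilated-and-translated copies, arranged so that the Carnot--Carath\'eodory metric sees it as a $\beta_-(\alpha)$-CMSC while the Euclidean metric sees a set of dimension exactly $\alpha$. First I fix the relevant stratum: choose $l \in \{0,\dots,s-1\}$ with $\sum_{j=1}^l m_j < \alpha \le \sum_{j=1}^{l+1} m_j$ and put $\gamma = \alpha - \sum_{j=1}^l m_j \in (0,m_{l+1}]$, so that $\beta_-(\alpha) = \sum_{j=1}^l jm_j + (l+1)\gamma$. Fixing a large integer $N$ and using the dilations $\delta_{1/N}$ together with a finite set of left translations (exactly as in the self-similar constructions of \cite[Proposition 4.14]{BTW2009}), I choose the digit set at each level so that the first $l$ layers are fully populated, the critical layer $l+1$ is thinned according to a Cantor-type pattern of Euclidean dimension $\gamma$, and layers $l+2,\dots,s$ carry a single digit. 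Because $\gamma$ is an arbitrary number in $(0,m_{l+1}]$, the thinning in layer $l+1$ cannot in general be self-similar; this is precisely where I replace the self-similar piece of \cite{BTW2009} by a CMSC, using the freedom exhibited in the Cantor sub-construction preceding Proposition \ref{prop:subconstr} to realise any prescribed dimension with controlled mass.

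For the upper bound $\HH_{cc}^{\beta_-(\alpha)}(K) < \infty$ I verify that $\{X_\iii\}$ is a $\beta_-(\alpha)$-CMSC in the CC metric and then quote Proposition \ref{prop:subconstr}. Conditions \ref{cond:incl} and \ref{cond:decr} are immediate from the nesting and the geometric shrinking of diameters under $\delta_{1/N}$. For \eqref{cond:regular} I use that a CC-ball of radius $r$ has layer-$j$ Euclidean extent $r^j$, so that filling layers $1,\dots,l$ contributes a branching factor $\prod_{j=1}^l N^{jm_j}$ per level while the layer-$(l+1)$ thinning contributes a factor comparable to $N^{(l+1)\gamma}$; the product is $N^{\beta_-(\alpha)}$, and since each child has CC diameter comparable to $N^{-1}$ times that of its parent, the sum $\sum_{|\jjj|=n}\diam_{cc}(X_{\iii\jjj})^{\beta_-(\alpha)}$ is comparable to $\diam_{cc}(X_\iii)^{\beta_-(\alpha)}$, the fluctuation from non-integer $\gamma$ being absorbed into the constant $C$.

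For the lower bound $\HH_e^\alpha(K) > 0$ I pass to the Euclidean projection $\Pi_{l+1}\colon \G \to \R^{m_1+\cdots+m_{l+1}}$, which is $1$-Lipschitz for the Euclidean metric and therefore satisfies $\HH_e^\alpha(K) \ge \HH_e^\alpha(\Pi_{l+1}(K))$. Since $M^{\beta_-(\alpha)}(J) > 0$ for the CMSC (this is exactly the content established in the proof of Proposition \ref{prop:subconstr}), Proposition \ref{prop:Frostman} furnishes a measure $\mu$ with $0 < \mu(J) < \infty$ and $\mu([\iii]) \le \diam_{cc}(X_\iii)^{\beta_-(\alpha)}$, comparable to $N^{-n\beta_-(\alpha)}$ for $|\iii|=n$. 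Set $\nu = (\Pi_{l+1})_*\mu$. A direct count at scale $r = N^{-n}$ shows that the number of level-$n$ pieces whose projection meets a Euclidean ball $B(y,r)$ is comparable to $\prod_{j=1}^l N^{(j-1)nm_j}\cdot N^{l\gamma n}$; multiplying by the mass bound and recalling $\beta_-(\alpha) = \sum_{j=1}^l jm_j + (l+1)\gamma$ collapses the exponent to $-\alpha$, so that $\nu(B(y,r)) \lesssim r^\alpha$. The mass distribution principle then yields $\HH_e^\alpha(\Pi_{l+1}(K)) > 0$, and the lower bound follows.

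The main obstacle is purely the interaction of the group law with the layer projections. Because left translation in $\G$ mixes the strata, the layer-$j$ coordinate of a construction piece is a polynomial in the digit coordinates rather than a free parameter, so the digit set (following \cite{BTW2009}) must be chosen so that the low layers genuinely fill at the asserted scales and the layer-$(l+1)$ positions realise the CMSC pattern. Verifying that this is compatible with the nontrivial higher-layer corrections, and that the counting estimate for $\nu(B(y,r))$ survives them, is the only delicate point. Once the self-similar bookkeeping of \cite[Proposition 4.14]{BTW2009} is in hand, replacing its critical layer by a CMSC and recording the two measure estimates above is routine.
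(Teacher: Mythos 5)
Your construction is the same as the paper's: the same choice of the critical stratum $l$, the same pair of digit families (layers $1,\dots,l$ fully populated, layer $l+1$ adaptively thinned so that the selected word set is a $\beta_-(\alpha)$-CMSC for the CC metric), and the same appeal to Proposition \ref{prop:subconstr} for $\HH_{cc}^{\beta_-(\alpha)}(K)<\infty$. Where you genuinely diverge is the Euclidean lower bound. The paper slices: following the computation behind \cite[Lemma 4.16]{BTW2009}, for almost every $x\in\Pi_l(K)$ the fibre $\pi_{l+1}(K\cap\Pi_l^{-1}(x))$ is a Euclidean translate of the limit set of a fixed $\gamma$-CMSC, Proposition \ref{prop:subconstr} gives that fibre positive $\HH_e^\gamma$-measure, and integrating over the full-dimensional base $\Pi_l(K)$ yields $\HH_e^\alpha(K)>0$. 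You instead push the Frostman measure of Proposition \ref{prop:Frostman} forward under $\Pi_{l+1}$ and verify the mass distribution estimate $\nu(B(y,r))\lesssim r^\alpha$ directly; your exponent bookkeeping is correct, since a Euclidean ball of radius $N^{-n}$ meets $\lesssim N^{(j-1)nm_j}$ of the level-$n$ layer-$j$ boxes (side $N^{-jn}$, tiling a cube) for $j\le l$ and $\lesssim N^{l\gamma n}$ of the layer-$(l+1)$ boxes by $\gamma$-regularity of the thinned pattern, and the product times the per-cylinder mass $N^{-n\beta_-(\alpha)}$ collapses to $N^{-n\alpha}$. What your route buys is a single self-contained measure argument with no Fubini-type inequality for Hausdorff measures; what it costs is that the factorization of the count across strata is precisely where the group law bites, because the layer-$(l+1)$ position of a piece carries polynomial corrections from lower-layer digits. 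You flag this as the delicate point but do not discharge it; the resolution is that, for each fixed choice of the free lower-layer digits, these corrections merely translate the layer-$(l+1)$ Cantor pattern, so the uniform $\gamma$-regular count applies to every translate --- which is the same structural fact the paper's slicing argument extracts from \cite{BTW2009}, so neither route avoids proving it. With that count written out (and with $\nu$ taken as the pushforward of $\mu$ restricted to $J$, so that $\nu$ is genuinely carried by $\Pi_{l+1}(K)$ before invoking the mass distribution principle and the $1$-Lipschitz bound $\HH_e^\alpha(K)\ge\HH_e^\alpha(\Pi_{l+1}(K))$), your argument is complete.
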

\begin{proof}
We prove the proposition using the ideas of \cite[Proposition 4.14]{BTW2009}.  Let $l \in \{0, \dots, s-1\}$ so that 
\[
\sum_{j=0}^l m_j < \alpha \le \sum_{j=0}^{l+1} m_j.
\]
Let $A_j = \{0, \dots, 2^j-1\}^{m_j}$ for each $j = 1, \dots, s$. Define
\[
\mathcal{F}_1 = \{F_{a_1\cdots a_l} : a_1\in A_1, \dots, a_l \in A_l\} 
\]
and
\[
\mathcal{F}_2 = \{F_{a_1\cdots a_{l+1}} : a_1\in A_1, \dots, a_{l+1}\in A_{l+1}\},
\]
where the functions $F_{a_1\cdots a_k}$ are defined as
\[
 F_{a_1\cdots a_k}(p) = p_{a_1\cdots a_k}*\delta_{1/2}\left(p_{a_1\cdots a_k}^{-1}*p\right)
\]
with $p_{a_1\cdots a_k} = (a_1, \dots, a_k, 0, \dots, 0)$.

Next we define a sequence $(n_i)_{i \in \N} \subset \{1,2\}^\N$ which tells us what system of functions will be used at step $i$. Let $n_1 = 2$ and define the rest by induction as follows: Assume that $n_1, \dots, n_t$ have been defined. Then $n_{t+1} = 2$ if
 \[
\prod_{i = 1}^t n_i^{(l+1)m_{l+1}} < 2^{t(l+1)\left(\alpha - \sum_{j=0}^lm_j\right)}.
\]
Otherwise define $n_{t+1} = 1$.

Let $E$ be the attractor of $\mathcal{F}_2$.  Write $\mathcal{F}_2 = \{g_1, \dots, g_{2^{\sum_{j=0}^{l+1}jm_j}}\}$ with $g_t \notin \mathcal{F}_1$ for $2^{\sum_{j=0}^ljm_j} < t \le 2^{\sum_{j=0}^{l+1}jm_j}$. Write $I = \{1, \dots, 2^{\sum_{j=0}^{l+1}jm_j}\}$. With this enumeration define for $\iii = (i_1, \dots, i_t)\in I^t$ 
\[
X_\iii = g_{i_1}\circ \cdots \circ g_{i_t}(E)
\]
By Proposition \ref{prop:semiIFStractable} the collection $\{X_\iii : \iii \in I^*\}$ is a CMC. Let now
\[
 J = \left\{\iii = (i_1, i_2,  \dots) : i_j \in \N, 1 \le i_j \le n_j^{(l+1)M_{l+1}}2^{\sum_{j=0}^ljm_j}\right\}.
\]
The collection $\{X_\iii : \iii \in J_*\}$ is then a $\beta_-(\alpha)$-CMSC: Let $\iii \in J_*$ and $n > |\iii|$. Then by
\[
 \sum_{\iii\jjj \in J_n} \diam_{cc}(X_{\iii\jjj})^{\beta_-(\alpha)} = \left(2^{|\iii|-n}\diam_{cc}(X_\iii)\right)^{\beta_-(\alpha)}\prod_{i = |\iii|+1}^n n_i^{(l+1)m_{l+1}}2^{t\sum_{j=0}^ljm_j}
\]
and the definition of the sequence $(n_j)_{j=1}^\infty$ we get
\[ 
C^{-1}\diam_{cc}(X_\iii)^{\beta_-(\alpha)} \le \sum_{\iii\jjj \in J_n}\diam_{cc}(X_{\iii\jjj})^{\beta_-(\alpha)}\le C\diam_{cc}(X_\iii)^{\beta_-(\alpha)},
\]
where $C = 2^{2(l+1)m_{l+1}}$. Therefore by Proposition \ref{prop:subconstr} we have $\HH_{cc}^{\beta_-(\alpha)}(K) < \infty$, where $K$ is the limit set of the sub-construction.

To see that $0 < \HH_e^\alpha(K)$ we estimate the level sets of a Lipschitz mapping as in \cite{BTW2009}, but now they are not translates of an invariant set of a self-similar IFS. With a similar calculation as in the proof of \cite[Lemma 4.16]{BTW2009} we see that for almost every $x \in \Pi_l(K)$ the set $\pi_{l+1}(K\cap \Pi_l^{-1}(x))$ is a Euclidean translate of the limit set $K'$ of the Euclidean construction $\{Y_\iii : \iii \in J'_*\}$ with
\[
 Y_\iii = h_{i_1} \circ \cdots \circ h_{i_m}([0,2]^{m_{l+1}}),
\]
$ J' = \{\iii = (i_1, \dots) :  1 \le i_j \le n_j^{(l+1)m_{l+1}}\}$ and $h_j(y) = 2^{-l-1}y + (1- 2^{-l-1})a_j$. Clearly the sub-construction satisfies the finite clustering property. Write $\gamma = \alpha-\sum_{j=0}^lm_j$. The collection $\{Y_\iii : \iii \in J'_*\}$ is now a $\gamma$-CMSC: Let $\iii \in J'_*$ and $n > |\iii|$. Now
\[
 \sum_{\iii\jjj \in J'_n} \diam_e(Y_{\iii\jjj})^\gamma = \left(2^{(l+1)(|i|-n)}\diam_e(Y_\iii)\right)^\gamma \prod_{i = |\iii|+1}^n n_i^{(l+1)m_{l+1}}
\]
gives
\[
 C^{-1}\diam_e(Y_\iii)^\gamma \le \sum_{\iii\jjj \in J'_n} \diam_e(Y_{\iii\jjj})^\gamma\le C\diam_e(Y_\iii)^\gamma
\]
with $C$ as before. Then by Proposition \ref{prop:subconstr} we get $\HH_e^\gamma(K')>0$. Integrating over $\Pi_l(K)$ gives $\HH_e^\alpha(K) > 0$.
\end{proof}


\begin{thebibliography}{M}
\bibitem{A1983} P. Assouad, \emph{Plongements lipschitziens dans $R\sp{n}$}, Bull. Soc. Math. France \textbf{111} (1983),  no. 4, 429--448.

\bibitem{BR2007} Z. Balogh and H. Rohner, \emph{Self-similar sets in doubling spaces}, Illinois J. Math. \textbf{51} (2007), no. 4, 1275--1297.

\bibitem{BTW2009} Z. Balogh, J. Tyson and B. Warhurst, \emph{Sub-Riemannian vs. Euclidean dimension comparison and fractal geometry on Carnot groups},  Adv. Math. \textbf{220} (2009), no. 2, 560--619.

\bibitem{BG1992} C. Bandt and S. Graf, \emph{Self-similar sets. VII. A characterization of self-similar fractals with positive Hausdorff measure},  Proc. Amer. Math. Soc. \textbf{114} (1992), no. 4, 995--1001.

\bibitem{B1996} L. Barreira, \emph{A non-additive thermodynamical formalism and applications to dimension theory of hyperbolic dynamical systems}, Ergodic Theory Dynam. Systems \textbf{16} (1996), 871--927.

\bibitem{CM1992} R. Cawley and R. D. Mauldin, \emph{Multifractal Decompositions of Moran Fractals}, Adv. Math. \textbf{92} (1992), 196--236.

\bibitem{F1990} K. Falconer, \emph{Fractal Geometry: Mathematical Foundations and Applications}, John Wiley \& Sons, 1990.

\bibitem{F1997} K. Falconer, \emph{Techniques in fractal geometry}, John Wiley \& Sons Ltd., Chichester, 1997.

\bibitem{H1995} J. D. Howroyd, \emph{On dimension and on the existence of sets of finite positive Hausdorff measure}, Proc. London Math. Soc. (3) \textbf{70} (1995), 581--604.

\bibitem{H1981} J. E. Hutchinson, \emph{Fractals and self-similarity}, Indiana Univ. Math. J. \textbf{30} (1981), no. 5, 713--747.

\bibitem{K2004} A. K\"aenm\"aki, \emph{On natural invariant measures on generalised iterated function systems}, Ann. Acad.
Sci. Fenn. Math. \textbf{29} (2004), no. 2, 419--458.

\bibitem{KV2009} A. K\"aenm\"aki and M. Vilppolainen, \emph{Dimension and measures on sub-self-affine sets}, Monatsh. Math., to appear, doi:10.1007/s00605-009-0144-9.

\bibitem{KV2008} A. K\"aenm\"aki and M. Vilppolainen, \emph{Separation conditions on controlled Moran constructions}, Fund. Math.  \textbf{200} (2008),  no. 1, 69--100.

\bibitem{KS2009} A. K\"aenm\"aki and P. Shmerkin, \emph{Overlapping self-affine sets of Kakeya type}, Ergodic Theory Dynam. Systems \textbf{29} (2009), no. 3, 941--965.

\bibitem{K1995} J. Kigami, \emph{Hausdorff dimensions of self-similar sets and shortest path metrics}, J. Math. Soc. Japan \textbf{47} (1995), no. 3, 381--404.

\bibitem{LNW2009} K.-S. Lau, S.-M. Ngai and X.-Y. Wang, \emph{Separation conditions for conformal iterated functions systems}, Monatsh. Math. \textbf{156} (2009), no. 4, 325--355.

\bibitem{LM2007} M. Llorente and M. Mor\'an, \emph{Self-similar sets with optimal coverings and packings}, J. Math. Anal. Appl. \textbf{334} (2007), no. 2, 1088--1095.

\bibitem{M1995} P. Mattila, \emph{Geometry of Sets and Measures in Euclidean Spaces: Fractals and Rectifiability}, Cambridge Univ. Press, Cambridge, 1995.

%\bibitem{MU1999} R. D. Mauldin and M. Urba\'nski, \emph{Dimensions and measures in infinite iterated function systems}, Proc. London %Math. Soc. \textbf{73} (1996), 105--154. \textbf{[no. ?]}

%\bibitem{M} J. Miao, Unpublished doctoral thesis [\emph{The Geometry of Self-affine Fractals}, PhD Thesis, University of St Andrews, %2008] \textbf{(fix this!)}

\bibitem{M2005} M. Mor\'an, \emph{Computability of the Hausdorff and packing measures on self-similar sets and the self-similar tiling principle}, Nonlinearity \textbf{18} (2005), no. 2, 559--570.

\bibitem{M1946} P. A. P. Moran, \emph{Additive functions of intervals and Hausdorff measure}, Proc. Cambridge Philos. Soc. \textbf{42} (1946), 15--23.

\bibitem{QRS2001} C. Q. Qu, H. Rao and W. Y. Su, \emph{Hausdorff measure of homogeneous Cantor set},
Acta Math. Sin. (Engl. Ser.) \textbf{17} (2001), no. 1, 15--20.

\bibitem{R1973} W. Rudin, \emph{Functional Analysis}, McGraw-Hill. Inc., 1973.

\bibitem{S1996} A. Schief, \emph{Self-similar sets in complete metric spaces}, Proc. Amer. Math. Soc. \textbf{124} (1996), 481--490.

\bibitem{S1994} A. Schief, \emph{Separation properties for self-similar sets}, Proc. Amer. Math. Soc. \textbf{122} (1994), 111--115.

\bibitem{ZF2004} Z. Zhou and L. Feng, \emph{Twelve open problems on the exact value of the Hausdorff measure and on topological entropy: a brief survey of recent results}, Nonlinearity \textbf{17} (2004), no. 2, 493--502.
\end{thebibliography}
\end{document}